\theoremstyle{plain}
\newtheorem{theorem}{Theorem}[section]
\newtheorem{proposition}[theorem]{Proposition}
\newtheorem{conjecture}[theorem]{Conjecture}
\theoremstyle{definition}
\newtheorem{definition}[theorem]{Definition}
\theoremstyle{remark}
\newtheorem{example}[theorem]{Example}
\newcommand{\N}{\mathbb{N}}
\newcommand{\Z}{\mathbb{Z}}
\newcommand{\bi}{\mathfrak{i}}
\begin{document}


\title{Dynamic Structures of Monomials \\ on $p$-adic Integers for Small Primes $p$}

\author{Myunghyun Jung and Donggyun Kim}

\date{}
\renewcommand\thefootnote{}


\maketitle \setlength{\hangindent}{0pt}

\begin{abstract}
We study the dynamic structures of the monomial $x^m$ over the ring of $p$-adic integers for every positive integer $m$ and for primes $p=2,3$ and $5$. The dynamic structures are described by investigating minimal decompositions which consist of minimal subsystems and attracting basins.
\end{abstract}

\noindent MSC2010: 11S82; 37P35

\noindent Keywords: $p$-adic dynamical system; monomial; minimal decomposition

\tableofcontents




\section{Introduction}

Let $\Z_p$ be the ring of $p$-adic integers and $f$ a polynomial over $\Z_p$. The dynamical system $(\Z_p, f)$ has a general theorem of the minimal decomposition due to Fan and Liao in 2011 \cite{FanLiao2011Minimal}.

\begin{theorem} [\cite{FanLiao2011Minimal}] \label{thm:decomposition}
Let $f \in \mathbb{Z}_p[x]$ be a polynomial of integral coefficients with degree at least  $2$. We have the following decomposition
\[
\mathbb{Z}_p = \mathcal{P} \bigsqcup \mathcal{M} \bigsqcup \mathcal{B}
\]
where
\begin{enumerate}
\item $ \mathcal{P}$ is the finite set consisting of all periodic points of $f$,
\item $\mathcal{M}= \bigsqcup_i \mathcal{M}_i$ is the union of all (at most countably many) clopen invariant sets such that each $\mathcal{M}_i$ is a finite union of balls and each subsystem $f: \mathcal{M}_i \to \mathcal{M}_i$ is minimal, and
\item each point in $\mathcal{B}$ lies in the attracting basin of a periodic orbit or of a minimal subsystem.
\end{enumerate}
\end{theorem}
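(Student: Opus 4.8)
The plan is to analyze $f$ through its reductions on the finite quotients $\mathbb{Z}/p^n\mathbb{Z}$ and to reconstruct the global picture by an inverse-limit argument, exploiting two structural features of an integral polynomial: it is $1$-Lipschitz, $|f(x)-f(y)|_p \le |x-y|_p$, so it respects the tree of balls and descends to a self-map $f_n$ of $\mathbb{Z}/p^n\mathbb{Z}$ for every $n$; and its derivative satisfies $|f'(x)|_p \le 1$ everywhere, so $f$ has no repelling periodic points. First I would record that the functional graph of each finite map $f_n$ is a disjoint union of cycles with trees feeding into them, that the reductions make $\{(\mathbb{Z}/p^n\mathbb{Z}, f_n)\}_n$ an inverse system with limit $(\mathbb{Z}_p, f)$, and that a cycle of $f_{n+1}$ always projects to a cycle of $f_n$; hence the combinatorics is organized by refinement of cycles of balls.

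The heart of the argument is a local trichotomy for an invariant cycle of balls. Fix a cycle $(B_0,\dots,B_{\ell-1})$ of radius-$p^{-n}$ balls permuted by $f$ and study the return map $g=f^{\ell}\colon B_0 \to B_0$, well defined since $f^\ell(B_0)\subseteq B_0$. Writing $x=a+p^n t$ with $a$ the center of $B_0$, the expansion $g(a+p^n t)\equiv g(a)+\lambda p^n t \pmod{p^{n+1}}$ shows that refining $B_0$ into its $p$ sub-balls turns $g$ into the affine map $\bar g\colon t\mapsto \lambda t+c$ on $\mathbb{F}_p$, where $\lambda \equiv g'(a)$ and $c \equiv (g(a)-a)/p^n$. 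The orbit structure of $\bar g$ splits into three regimes: (i) $\lambda\equiv 1$, $c\not\equiv 0$, where $\bar g$ is a single $p$-cycle, so the cycle length multiplies by $p$ and refinement perpetuates one growing tower of cycles; (ii) $\lambda\equiv 1$, $c\equiv 0$, where $\bar g$ is the identity and the analysis recurses inside each sub-ball; and (iii) $\lambda\not\equiv 1$, where $\bar g$ has a unique fixed sub-ball together with permutation cycles (if $\lambda\not\equiv 0$) or transient sub-balls collapsing onto the fixed one (if $\lambda\equiv 0$). A Hensel/contraction argument applied to $g(x)-x$, whose derivative $\lambda-1$ is then a unit, locates in case (iii) a unique periodic point, attracting when $\lambda\equiv 0$ and indifferent when $\lambda$ is a unit $\ne 1$; case (i) is the isometric regime that feeds the minimal components.

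With the trichotomy in hand I would assemble the three global pieces. Whenever case (iii) produces an attracting periodic point, the $p$-adic contraction principle furnishes an open ball about it lying in its basin; the orbit is placed in $\mathcal{P}$ and the basin minus the orbit in $\mathcal{B}$. In the growing-tower case (i) I would show that the inverse limit of the single cycles of lengths $\ell, p\ell, p^2\ell, \dots$ is a clopen set, a finite union of balls, on which $f$ is conjugate to an odometer and hence minimal; transitivity at every finite level yields minimality of the limit, and these sets become the components $\mathcal{M}_i$, with points attracted to them but not lying in them placed in $\mathcal{B}$. The remaining regimes, the identity case (ii) and the indifferent fixed points of case (iii), are resolved by descending to the next level, and compactness of $\mathbb{Z}_p$ guarantees that the collection of minimal components is at most countable and that every point is eventually classified, yielding the disjoint union $\mathbb{Z}_p=\mathcal{P}\sqcup\mathcal{M}\sqcup\mathcal{B}$.

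I expect the main obstacle to be twofold and concentrated in the minimal case. First, proving genuine minimality of the odometer-type limit requires showing that the single-cycle pattern of case (i) really persists to every subsequent level and that $f$ acts transitively on the balls there — an honest verification that refinement never eventually splits, which forces one to control the arithmetic of $\lambda$ and $c$ uniformly in $n$. Second, and more delicate, is the finiteness of $\mathcal{P}$: I must rule out infinitely many periodic orbits by showing that the descent through cases (ii) and (iii) terminates, stabilizing into finitely many attracting orbits or minimal components rather than generating periodic points of unbounded period. The absence of repelling orbits, together with the degree bound on $f^\ell(x)-x$ at each fixed level, are the levers, but converting them into a global bound on periods — and thereby closing the decomposition — is the crux.
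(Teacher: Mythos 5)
Your overall strategy is the right one, and it is essentially the Des~Jardins--Zieve/Fan--Liao level-by-level analysis that this paper imports wholesale in Section~2 (Definitions \ref{def:movement for p=2} and \ref{def:movement for p>=3}, Propositions \ref{prop:behavior of lifts for p=2}, \ref{prop:behavior of lifts for p>=3} and \ref{prop: lifts of splitting cycle for p>=3}): reduce mod $p^n$, linearize the return map of a cycle of balls, classify the induced affine map, build odometers from growing towers and attracting orbits from Hensel. But there is a genuine gap at $p=2$: the first-order linearization over $\mathbb{F}_p$ cannot support your trichotomy there. When $p=2$ and $\lambda\equiv 1$, $c\equiv 1\pmod 2$, your case (i) asserts that ``refinement perpetuates one growing tower''; in fact the behavior depends on $\lambda\bmod 4$. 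If $\lambda\equiv 3\pmod 4$ (weak growth in Definition \ref{def:movement for p=2}), the doubled cycle does \emph{not} grow again --- its lift strongly splits (statements 3 and 4 of Proposition \ref{prop:behavior of lifts for p=2}), so the tower stalls and the odometer construction does not get off the ground. Distinguishing strong from weak behavior requires the second-order datum $a_l\bmod 4$, i.e.\ information beyond your affine map $\bar g$ on $\mathbb{F}_2$; this is exactly why the $p=2$ classification is stratified into strong/weak cases, and why the minimal components in this paper's Proposition \ref{prop:Z_2 m=-1(4)} arise only after a weak-grow/strong-split transient. Your case (i) is therefore false as stated for $p=2$.

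The two ``obstacles'' you flag at the end are not loose ends but the actual content of the theorem, and the proposal leaves both open. First, persistence of growth is a concrete lemma --- growing at a level $l\geq 2$ (strongly growing, for $p=2$) implies the lifted cycle grows again, hence forever --- and proving it requires computing $a_{l+1},b_{l+1}$ from $a_l,b_l$, not merely ``controlling $\lambda$ and $c$ uniformly in $n$.'' Second, your hope that the descent through case (ii) ``terminates'' is misplaced: it need not terminate, and for the very maps studied in this paper it does not --- for $f(x)=x^m$ the fixed points $\pm 1$ head threads that split at \emph{every} level (see Proposition \ref{prop:Z_2 m=1(4)}). The correct resolution is not termination but the observation that a thread splitting at all levels forces $f^k(x)\equiv x \pmod {p^n}$ for every $n$ on the nested intersection, so the limit point is a genuine periodic point; the quantities $A_l,B_l$ of Proposition \ref{prop: lifts of splitting cycle for p>=3} exist precisely to measure how long splitting persists and to separate this indifferent-periodic-point regime from the eventually-growing one. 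Finiteness of $\mathcal{P}$ then comes from bounding periods: growth at levels $\geq 2$ excludes periodic points (those cycles become minimal components), and partial splitting multiplies the length by a divisor of $p-1$ essentially once, since the surviving multiplier becomes $\equiv 1\pmod p$; with periods bounded, each period $k$ contributes at most $\deg(f)^k$ roots of $f^k(x)-x$. You located the difficulties accurately, but the proposal proves neither lemma, and at $p=2$ its classification is too coarse even to state them.
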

This decomposition is usually referred to as a \emph{minimal decomposition} and the invariant subsets $\mathcal{M}_i$ are called {\em minimal components }.
	
Although we have a general decomposition theorem, it is still difficult to describe  the explicit minimal decomposition for  a given polynomial,  as there have been few works done on them. Multiplications on $\Z_p$ for $p \geq 3$ were studied by Coelho and Parry in 2001 \cite{CoelhoParry2001}, and affine maps and quotient maps of affine maps for all $p$ were studied by Fan, Li, Yao and Zhou in 2007 \cite{FanLiYaoZhou2007Affine} and by Fan, Fan, Liao and Wang in 2014 \cite{FanFanLiaoWang2014Homographic}, respectively. The quadratic maps on $\Z_p$ for $p=2$ were investigated by Fan and Liao in 2011 \cite{FanLiao2011Minimal}, respectively. Recently, the Chebyshev polynomials and Fibonacci polynomials on $\Z_2$ were investigated by Fan and Liao in 2016 \cite{FanLiao2016Chebyshev} and by Jung, Kim and Song in 2019 \cite{JungKimSong2019Fibonacci}, respectively. The monomial $x^m$ on $\Z_p$ is a basic function, but it's dynamic structures are not studied except only the case $x^2$ for every $p$ by Fan and Liao in 2016 \cite{FanLiao2016Square}.

In this paper, we study the dynamic structures of the monomial $x^m$ over the ring of $p$-adic integers for every positive integer $m$ and for primes $p=2,3$ and $5$. The dynamic structures are described by investigating minimal decompositions which consist of minimal subsystems and attracting basins following the style in Theorem \ref{thm:decomposition}.

\section{Preliminary}
\subsection{$p$-adic spaces}
We first review the basic definitions and properties of the $p$-adic space.
Let $\N$ be the set of positive integers, $\Z$ the set of integers and $p$ a prime number. We consider a family of rings $\{ \Z/p^k \Z\}$ for all $k\in \N$ with canonical projections $\{ \varphi_{i j} : \Z/p^j \Z \to \Z/p^i \Z \}$ for all integers $i$ and $j$ with $i\le j$. The \emph{ring of $p$-adic integers}, denoted $\Z_p$, is the projective limit of the projective system of rings $\{\Z/p^k \Z\}$ with projections $\{\varphi_{i j} \}$.

Then the ring $\Z_p$ is the set of all sequence $(x_k)$ with the property $x_k \in \Z/p^k \Z$ and $\varphi_{i j}(x_j)=x_i$ for all $i \le j$. A point $x_k$ in $\Z/p^k \Z$ can be represented by $x_k=a_0 + a_1 p+a_2 p^2 + a_3 p^3 + \cdots+a_{k-1}p^{k-1}$ with unique $a_i \in\{0,1,\cdots,p-1\}$ and the point $x = (x_k)$ in $\Z_{p}$ by $x: a_0 , a_0 + a_1 p, a_0 + a_1 p+a_2 p^2, a_0 + a_1 p+a_2 p^2 + a_3 p^3, \cdots $ as a sequence form, or simply $x=a_0 + a_1 p+a_2 p^2 + a_3 p^3 + \cdots$.

An element $x$ can be written as $x=p^\nu (b_0 + b_1 p+b_2 p^2 + b_3 p^3 + \cdots)$ with $\nu\in \N\cup \{0\}$ and $b_i \in\{0,1,\cdots,p-1\}$. In this case, we say that $p^\nu$ \emph{divides} $x$ and denote $p^\nu |x$.

The \emph{$p$-adic order} or \emph{$p$-adic valuation} for $\Z_p$ is defined as the function $\nu_p : \Z_p \rightarrow \mathbb{Z} \cup \infty$
\[
\nu_p (x) = \begin{cases}\textrm{max}\{\nu \in \N\cup \{0\} : p^\nu | x \}&\textrm{if}~x \neq 0,\\
\infty &\textrm{if}~x =0,\end{cases}
\]

and the \emph{$p$-adic metric} of $\Z_p$ is defined as the function $|\cdot |_p : \Z_p \rightarrow \mathbb{R}$
\[
|x|_p = \begin{cases}p^{-\nu_p (x)} &\textrm{if}~x \neq 0,\\
0 &\textrm{if}~x =0.\end{cases}
\]
With the $p$-adic metric, $\Z_p$ becomes a compact space.

Note that $\Z/p^k\Z$ can be viewed as a subset of $\Z_{p}$,
\[ \Z/p^k\Z=\{x\in\Z_{p}: a_{i}=0 \text{ for every } i\ge k\}, \]
and for $x=a_0 + a_1 p+a_2 p^2 + a_3 p^3 + \cdots$ in $\Z_p$, the element  $x \pmod{ p^k}$ means $a_0 + a_1 p+a_2 p^2 + a_3 p^3 + \cdots+a_{k-1}p^{k-1}$, which is now in $\Z_p / p^k \Z_p$.
It can be checked that $\Z/p^k\Z$ is isomorphic to $\Z_p / p^k \Z_p$. We state properties of the $p$-adic valuation. 

\begin{proposition}
Let $x$ and $y$ be elements of $\Z_p$. Then, $\nu_p$ has the properties:
\begin{enumerate}
  \item $\nu_p(xy)=\nu_p(x)+\nu_p(y).$
  \item $\nu_p(x+y)\geq \min\{\nu_p(x),\nu_p(y)\}$, with equality when $\nu_p(x)$ and $\nu_p(y)$ are unequal.
\end{enumerate}
\end{proposition}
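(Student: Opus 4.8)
The plan is to work directly from the canonical expansion $x = p^{\nu_p(x)}(b_0 + b_1 p + b_2 p^2 + \cdots)$ with $b_0 \neq 0$, which characterizes the valuation: $\nu_p(x) = \nu$ holds precisely when $x = p^\nu u$ for some $u \in \Z_p$ whose leading digit $b_0$ is nonzero, equivalently $p \nmid u$. Throughout I would dispose of the degenerate cases $x = 0$ or $y = 0$ first, where both sides of each identity equal $\infty$ under the usual conventions, and then assume $x, y \neq 0$.

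For part (1), I would write $x = p^a u$ and $y = p^b v$ with $a = \nu_p(x)$, $b = \nu_p(y)$, and with $u, v$ having nonzero leading digits $b_0, c_0 \in \{1, \ldots, p-1\}$. Then $xy = p^{a+b}(uv)$, so it remains to show $\nu_p(uv) = 0$, i.e. that $p \nmid uv$. The leading digit of $uv$ is $b_0 c_0 \bmod p$, and the essential point---the one place where primality of $p$ enters---is that $\Z/p\Z$ is a field and hence has no zero divisors, so $b_0 c_0 \not\equiv 0 \pmod{p}$. This gives $\nu_p(uv) = 0$ and therefore $\nu_p(xy) = a + b$.

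For part (2), I would assume without loss of generality that $a = \nu_p(x) \le b = \nu_p(y)$ and factor $x + y = p^a(u + p^{b-a} v)$, which immediately yields $p^a \mid (x+y)$ and hence $\nu_p(x+y) \ge a = \min\{\nu_p(x), \nu_p(y)\}$. For the equality assertion when $a < b$ strictly, I would observe that the extra factor $p^{b-a}$ (with $b - a \ge 1$) contributes nothing to the leading digit, so $u + p^{b-a}v$ still has leading digit $b_0 \neq 0$; thus $p$ does not divide it and $\nu_p(x+y) = a$. One subtlety to check is that $x + y \neq 0$ in this case, which follows because $x + y = 0$ would force $\nu_p(x) = \nu_p(-y) = \nu_p(y)$ (using part (1) together with $\nu_p(-1) = 0$), contradicting $a < b$.

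The computations are routine once the canonical expansion is in hand; the only genuinely structural input is the no-zero-divisors property of $\Z/p\Z$ in part (1), and the main thing to be careful about is tracking when the leading digit of a product or a sum survives modulo $p$.
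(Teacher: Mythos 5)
Your proof is correct. Note that the paper states this proposition in its preliminaries without any proof, treating it as a standard fact, so there is no argument of the paper to compare against; what you wrote is the standard textbook proof via the unit decomposition $x=p^{\nu_p(x)}u$ with $p\nmid u$, with the conventions for $\nu_p(0)=\infty$ handled correctly. Two minor remarks: in part (1), the assertion that the leading digit of $uv$ is $b_0c_0 \bmod p$ is cleanest to justify by noting that reduction modulo $p$ is a ring homomorphism $\Z_p \to \Z/p\Z$ (this is where your appeal to $\Z/p\Z$ having no zero divisors does its work); and in part (2), your separate verification that $x+y\neq 0$ when $a<b$ is redundant, since the computation $u+p^{b-a}v\equiv b_0\not\equiv 0 \pmod{p}$ already shows that $u+p^{b-a}v$, and hence $x+y=p^a(u+p^{b-a}v)$, is nonzero.
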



\subsection{Dynamics of $p$-adic polynomials}

Let $f$ be a polynomial over $\Z_p$ and let $l$ be a positive integer. Let $\sigma=\{x_1,\cdots,x_k\}$ with $x_i \in \Z/p^l\Z$ be a cycle of $f$ of length $k$ (also called a $k$-cycle), i.e.,
$$f(x_1)=x_2,\cdots,f(x_i)=x_{i+1},\cdots,f(x_k)=x_1~(\mathrm{mod}~p^l).$$
In this case, we also say that the cycle $\sigma$ is \emph{at level l}. Let
\[ X_\sigma:=\bigcup_{i=1}^k X_i~\mathrm{where}~X_i:=\{x_i+p^lt+p^{l+1}\Z:~t=0,\cdots,p-1\}. \]
Then
\[ f(X_i)\subset X_{i+1}~(1\leq i\leq k-1)~\mathrm{and}~f(X_k)\subset X_1, \]
and so $f$ is invariant on the clopen set $X_\sigma$. The cycles in $X_\sigma$ of $f$ (mod $p^{l+1}$) are called \emph{lifts of $\sigma$} (from level $l$ to level $l+1$). Note that the length of a lift of $\sigma$ is a multiple of $k$.

Let $\mathbb{X}_i:=x_i+p^l\Z_p$ and let $\mathbb{X}_{\sigma}:=\bigcup_{i=1}^{k}\mathbb{X}_i$. For $x\in\mathbb{X}_{\sigma}$, denote
\begin{align}
&a_l(x):=\prod_{j=0}^{k-1}f'\big(f^j(x)\big) \label{a_l}
\mathrm{~and~}\\
&b_l(x):=\frac{f^k(x)-x}{p^l} \label{b_l}.
\end{align}
Sometimes we abbreviate $a_l(x)$ and $b_l(x)$ to $a_l$ and $b_l$, respectively. The following definition and proposition were treated by Fan et al \cite{FanFares2011, FanLiYaoZhou2007Affine}. The following definition is for $p=2$.

\begin{definition}\label{def:movement for p=2}

We say $\sigma$ \emph{strongly grows at level l} if $a_l\equiv 1~(\mathrm{mod}~4)$ and $b_l\equiv 1~(\mathrm{mod}~2)$.
	
We say $\sigma$ \emph{strongly splits at level l} if $a_l\equiv 1~(\mathrm{mod}~4)$ and $b_l\equiv 0~(\mathrm{mod}~2)$.
	
We say $\sigma$ \emph{weakly grows at level l} if $a_l\equiv 3~(\mathrm{mod}~4)$ and $b_l\equiv 1~(\mathrm{mod}~2)$.
	
We say $\sigma$ \emph{weakly splits at level l} if $a_l\equiv 3~(\mathrm{mod}~4)$ and $b_l\equiv 0~(\mathrm{mod}~2)$.
	
We say $\sigma$ \emph{grows tails at level l} if $a_l\equiv 0~(\mathrm{mod}~2)$.
\end{definition}

\begin{proposition}[\cite{FanLiao2011Minimal},\cite{FanLiao2016Chebyshev}]\label{prop:behavior of lifts for p=2}
Let $\sigma=\{x_1,\cdots,x_k\}$ be a cycle of $f$ at level $l$.
\begin{enumerate}
\item If $\sigma$ is a strongly growing cycle at level $l\geq2$, then $f$ restricted onto the invariant clopen set $\mathbb{X}_{\sigma}$ is minimal.
		
\item If $\sigma$ is a strongly splitting cycle at level $l\geq2$ and $\nu_2(b_l(x))=s\geq1$ for all $x\in\mathbb{X}_\sigma$, then the lifts of $\sigma$ will keep splitting until to the level $l+s$ at which all lifts strongly grow.
		
\item Let $\sigma$ be a cycle of $f$ at level $n\geq2$. If $\sigma$ weakly grows then the lift of $\sigma$ strongly splits.
		
\item Let $\sigma$ be a weakly splitting cycle of $f$ at level $n\geq2$. Then one lift behaves the same as $\sigma$ while the other weakly grows and then strongly splits.
		
\item If $\sigma$ is a growing tails cycle at level $l\geq1$, then $f$ has a $k$-periodic orbit in the clopen set $\mathbb{X}_{\sigma}$ lying in its attracting basin.
\end{enumerate}
\end{proposition}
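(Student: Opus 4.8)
The engine behind all five parts is a local normal form for $f^k$ on a single ball $\mathbb{X}_i=x_i+2^l\Z_2$. Writing $x=x_i+2^ly$ with $y\in\Z_2$ and expanding $f^k$ in its divided derivatives at $x_i$ (which lie in $\Z_2$ because $f^k\in\Z_2[x]$), one obtains
\[
f^k(x_i+2^ly)=x_i+2^l\big(b_l+a_ly+r(y)\big),\qquad r(y)\in 2^l\Z_2,
\]
where $f^k(x_i)=x_i+2^lb_l$ and $(f^k)'(x_i)=a_l$ come from \eqref{a_l}--\eqref{b_l} and the chain rule, and the $n$-th divided-derivative term carries a factor $2^{l(n-1)}$ with $l(n-1)\ge l$ for $n\ge2$. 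Since $l\ge2$ we have $r(y)\equiv0\pmod4$, so the induced self-map $\tilde g(y)=b_l+a_ly+r(y)$ of $\Z_2$ is affine modulo $4$. As $a_l$ is odd, reducing modulo $2$ gives $\tilde g(y)\equiv y+b_l$, so $f^k$ transposes the two sub-balls $y\equiv0,1\pmod2$ of $\mathbb{X}_i$ when $b_l$ is odd (the level-$l$ cycle \emph{grows} to a single cycle of double length) and fixes each of them when $b_l$ is even (the cycle \emph{splits} into two cycles), recovering Definition~\ref{def:movement for p=2}.

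Next I record how the pair $(a_l,b_l)$ transforms under one lift. A lifted point remains in the same balls modulo $2^l$ and $f'$ is $1$-Lipschitz, so $f'(f^{k+j}(x))\equiv f'(f^j(x))\pmod{2^l}$; hence a growing lift satisfies $a_{l+1}\equiv a_l^2\pmod{2^l}$ and a splitting lift satisfies $a_{l+1}\equiv a_l\pmod{2^l}$. For the constant, a splitting lift gives $b_{l+1}=b_l/2$ at once from \eqref{b_l}, while a growing lift gives $b_{l+1}=\tfrac12\big(b_l(x)+b_l(f^k(x))\big)$; feeding the normal form into $b_l$ yields $b_l(f^k(x))\equiv a_lb_l(x)\pmod{2^l}$, whence $b_{l+1}\equiv\tfrac12(1+a_l)b_l\pmod{2^{l-1}}$. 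Reading the first congruence modulo $4$ and the second modulo $2$ is what drives the classification.

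Part (1): if $\sigma$ strongly grows then $a_{l+1}\equiv a_l^2\equiv1\pmod4$, and since $1+a_l\equiv2\pmod4$ with $b_l$ odd, $b_{l+1}$ is odd; the unique lift strongly grows again. By induction $\sigma$ grows at every level $\ge l$, so modulo $2^{l+n}$ the map $f$ acts on the $k2^n$ residues of $\mathbb{X}_\sigma$ as a single cycle of length $k2^n$, i.e.\ transitively at every level, which is minimality. Part (3): if $\sigma$ weakly grows then still $a_{l+1}\equiv a_l^2\equiv1\pmod4$, but now $1+a_l\equiv0\pmod4$ makes $b_{l+1}$ even, so the lift strongly splits. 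Part (2): a strongly splitting lift keeps $a_{l+1}\equiv a_l\equiv1\pmod4$ and halves $b$, so under the uniform hypothesis $\nu_2(b_l)=s$ the valuation drops by one at each level while splitting persists; after $s$ levels, at level $l+s$, the constant becomes odd with $a\equiv1\pmod4$, so every lift strongly grows.

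Part (4) is the only case that needs the linear term of the normal form. Here $a_l\equiv3\pmod4$, so $\nu_2(a_l-1)=1$, and $b_l(x)=b_l^{(0)}+(a_l-1)y+r(y)$ with $r(y)\equiv0\pmod4$ gives $b_l\equiv b_l^{(0)}$ on one sub-ball and $b_l\equiv b_l^{(0)}+2\pmod4$ on the other. Thus one sub-ball has $\nu_2(b_l)\ge2$, and its lift has $a_{l+1}\equiv3\pmod4$ with $b_{l+1}=b_l/2$ even, so it weakly splits just as $\sigma$ does; the other sub-ball has $\nu_2(b_l)=1$, and its lift has $a_{l+1}\equiv3\pmod4$ with $b_{l+1}$ odd, so it weakly grows and then strongly splits by part (3). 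Finally, for part (5), $a_l$ even forces $(f^k)'(x)\equiv a_l\equiv0\pmod2$ for all $x\in\mathbb{X}_i$, and the integral divided-derivative expansion then gives $|f^k(x)-f^k(x')|_2\le\tfrac12|x-x'|_2$ on $\mathbb{X}_i$; as $\mathbb{X}_i$ is complete and $f^k$-invariant, the Banach fixed point theorem produces a unique attracting fixed point of $f^k$ in each $\mathbb{X}_i$, and these combine into a $k$-periodic orbit of $f$ whose attracting basin is all of $\mathbb{X}_\sigma$. The main obstacle is precisely part (4): one must retain the term $(a_l-1)y$ rather than treat $b_l$ as constant on $\mathbb{X}_i$, and the hypothesis $l\ge2$ is exactly what kills the quadratic and higher corrections modulo $4$ so that the two sub-balls are separated cleanly by $b_l\bmod4$.
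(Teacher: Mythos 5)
Your proposal is correct. Note first that the paper itself gives no proof of this proposition: it is quoted verbatim from \cite{FanLiao2011Minimal} and \cite{FanLiao2016Chebyshev}, so there is no in-paper argument to compare against; what you have done is reconstruct, essentially faithfully, the standard argument of those references. Your engine --- the integral Taylor (divided-derivative) normal form $f^k(x_i+2^ly)=x_i+2^l\big(b_l+a_ly+r(y)\big)$ with $r(y)\in 2^l\Z_2$, the recursions $a_{l+1}\equiv a_l^2$ resp.\ $a_{l+1}\equiv a_l \pmod{2^l}$ and $b_{l+1}\equiv\tfrac12(1+a_l)b_l$ resp.\ $b_{l+1}=b_l/2$, and the mod-$4$ case analysis --- is exactly the linearization method of Desjardins--Zieve and Fan--Liao, and all your congruence computations check out, including the delicate point in part (4) where retaining the linear term $(a_l-1)y$ with $\nu_2(a_l-1)=1$ separates the two sub-balls by $b_l \bmod 4$, and the contraction argument for part (5), where the exact period $k$ follows because the $k$ fixed points of $f^k$ lie in the $k$ disjoint balls $\mathbb{X}_i$. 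The one step you assert without proof is the criterion in part (1) that transitivity at every level (a single cycle mod $2^{l+n}$ for all $n$) implies minimality on $\mathbb{X}_\sigma$; this is a standard lemma for $1$-Lipschitz maps on compact open subsets of $\Z_2$ (every orbit then meets every ball of every radius, hence is dense), and it is proved in the cited sources, so invoking it is legitimate, though a one-line justification would make your write-up fully self-contained.
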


The following definition is for $p\geq 3$.

\begin{definition}[\cite{Desjardins2001polynomial}]\label{def:movement for p>=3}
We say $\sigma$ \emph{grows at level l} if $a_l\equiv 1~(\mathrm{mod}~p)$ and $b_l\not\equiv 0~(\mathrm{mod}~p)$.
	
We say $\sigma$ \emph{splits at level l} if $a_l\equiv 1~(\mathrm{mod}~p)$ and $b_l\equiv 0~(\mathrm{mod}~p)$.
	
We say $\sigma$ \emph{grows tails at level l} if $a_l\equiv 0~(\mathrm{mod}~p)$.
	
We say $\sigma$ \emph{partially splits at level l} if $a_l\not\equiv 0,1~(\mathrm{mod}~p)$.
\end{definition}

\begin{proposition}[\cite{Desjardins2001polynomial}]\label{prop:behavior of lifts for p>=3}
Let $\sigma=\{x_1,\cdots,x_k\}$ be a cycle of $f$ at level $l\geq 1$.
\begin{enumerate}
\item If $\sigma$ is a growing cycle at level $l\geq 2$, then $f$ restricted to $X_{\sigma}$ (mod $p^{l+1}$) consists of a single cycle of length $pk$.

\item If $\sigma$ is a splitting cycle at level $l\geq 1$, then $f$ restricted to $X_{\sigma}$ (mod $p^{l+1}$) consists of $p$ cycles, each of length $k$.

\item If $\sigma$ is a growing tails cycle at level $l\geq1$, then $f$ restricted to $X_{\sigma}$ (mod $p^{l+1}$) contains one $k$-cycle, and the remaining points of $X$ are mapped into this cycle by $f^k$.

\item If $\sigma$ is a partially splitting cycle at level $l\geq1$, then $f$ restricted to $X_{\sigma}$ (mod $p^{l+1}$) contains one $k$-cycle and $(p-1)/d$ cycles of length $kd$, where $d$ is the order of $a_l$ in $(\Z/p\Z)^*$.
\end{enumerate}
\end{proposition}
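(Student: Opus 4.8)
The plan is to reduce the whole statement to the elementary dynamics of a single affine map on $\Z/p\Z$. Since $\sigma=\{x_1,\dots,x_k\}$ is a $k$-cycle at level $l$, the map $f$ cyclically permutes the $p$-element balls $X_1,\dots,X_k$ (mod $p^{l+1}$), so the cycle structure of $f$ on $X_\sigma$ is completely determined by that of $g:=f^k$ on a single ball, say $X_1$. First I would record the correspondence I will use throughout: a point $z\in X_1$ whose $g$-orbit has length $j$ has $f$-period exactly $kj$, because $f^m(z)\in X_{1+(m\bmod k)}$ forces any return to $X_1$ to occur at a multiple of $k$ (the $x_i$ being distinct mod $p^l$), after which $g^i(z)=z$ first holds at $i=j$. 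Thus a single $p$-cycle of $g$ on $X_1$ yields one $pk$-cycle of $f$, each fixed point of $g$ yields a $k$-cycle, and each $d$-cycle of $g$ yields a $kd$-cycle of $f$.

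The crux is to identify $g=f^k$ on $X_1$ explicitly. Writing a point of $X_1$ as $x_1+p^l t$ with $t\in\{0,\dots,p-1\}$, I would Taylor-expand the polynomial $f^k$ about $x_1$:
\[
f^k(x_1+p^l t)=f^k(x_1)+(f^k)'(x_1)\,p^l t+\sum_{j\ge 2}\frac{(f^k)^{(j)}(x_1)}{j!}\,(p^l t)^j .
\]
Every Hasse coefficient $(f^k)^{(j)}(x_1)/j!$ lies in $\Z_p$, so for $l\ge 1$ each term with $j\ge 2$ lies in $p^{2l}\Z_p\subseteq p^{l+1}\Z_p$ and drops out modulo $p^{l+1}$. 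Using $(f^k)'(x_1)=\prod_{j=0}^{k-1}f'(f^j(x_1))=a_l$ (chain rule) and $f^k(x_1)=x_1+p^l b_l$ (definition of $b_l$), this collapses to
\[
f^k(x_1+p^l t)\equiv x_1+p^l\big(a_l t+b_l\big)\pmod{p^{l+1}}.
\]
In the coordinate $t$, therefore, $g$ acts on $X_1\cong\Z/p\Z$ as the affine map $T(t)=a_l t+b_l$ (with $a_l,b_l$ read modulo $p$), and it visibly maps $X_1$ into itself. I would also note that when $a_l\equiv 1$ the reduction $b_l\bmod p$ is genuinely constant on the ball, so that \emph{growing} and \emph{splitting} are well posed, while when $a_l\not\equiv 1$ the classification invokes only $a_l$ and the orbit structure of $T$ is insensitive to the particular $b_l$.

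It then remains to read off the four cases from the dynamics of $T$ on $\Z/p\Z$ and translate through the correspondence above. If $a_l\equiv 1,\ b_l\not\equiv 0$ (\emph{growing}), $T$ is a nontrivial translation, a single $p$-cycle, giving one $pk$-cycle of $f$. If $a_l\equiv1,\ b_l\equiv0$ (\emph{splitting}), $T$ is the identity, with $p$ fixed points, giving $p$ cycles of length $k$. If $a_l\equiv0$ (\emph{growing tails}), $T$ is constant, with one fixed point into which every other point falls in one step, giving one $k$-cycle that absorbs the rest of $X_\sigma$ under $f^k$. Finally, if $a_l\not\equiv0,1$ (\emph{partially splitting}), $T$ has the unique fixed point $b_l/(1-a_l)$; conjugating $T$ to the linear map $s\mapsto a_l s$ shows the remaining $p-1$ points split into cosets of $\langle a_l\rangle\le(\Z/p\Z)^*$, i.e. $(p-1)/d$ orbits of length $d=\mathrm{ord}(a_l)$, giving one $k$-cycle together with $(p-1)/d$ cycles of length $kd$; the count $k+k(p-1)=pk$ confirms every point of $X_\sigma$ is accounted for.

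I would expect the only real obstacle to be the affine reduction itself — justifying that the higher Taylor terms of $f^k$ vanish modulo $p^{l+1}$ (which is where $l\ge 1$ and the integrality of the Hasse coefficients enter) and pinning the linear and constant parts down as exactly $a_l$ and $b_l$. This is also the place where $p\ge 3$ is doing work: since $(\Z/p\Z)^*$ is already nontrivial, the single extra level $p^{l+1}$ suffices to resolve the multiplicative action, unlike the prime $2$, where one must descend to $\bmod\ 4$. Once $g|_{X_1}$ is identified with $T$, each remaining assertion is a routine computation in the affine group of $\Z/p\Z$, and the passage from $g$-orbits to $f$-orbits is the bookkeeping recorded at the outset.
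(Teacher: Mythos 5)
Your proof is correct: the paper states this proposition without proof, importing it from Des Jardins--Zieve, and your reduction --- Taylor-expanding $f^k$ on a single ball, discarding the $j\ge 2$ terms via $2l\ge l+1$ and integrality of the Hasse coefficients, identifying $g=f^k$ with the affine map $t\mapsto a_l t+b_l$ on $\Z/p\Z$, and then translating $g$-orbits into $f$-cycles of $k$ times the length --- is exactly the standard argument of the cited reference. The only remark worth adding is that your argument establishes case 1 already for $l\ge 1$, so the hypothesis $l\ge 2$ there is not needed for this one-step lifting statement (it matters only for iterating the growth, as in the minimality assertions elsewhere in the paper).
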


\begin{proposition}[\cite{Desjardins2001polynomial}]\label{prop: lifts of splitting cycle for p>=3}
Let $p\geq 3$ and $l\geq 1$. Let $\sigma$ be a splitting cycle of $f$ at level $l$. Define
\begin{equation}\label{def: A_l and B_l}
A_l(x)=\nu_p\big(a_l(x)-1\big),\quad B_l(x)=\nu_p\big(b_l(x)\big).
\end{equation}
\begin{enumerate}
\item If $ B_l < A_l$ and $B_l < l$, every lift splits $B_l-1$ times then all lifts at level $B_l+l$ grow forever.

\item If $A_l\leq B_l$ and $A_l<l$, there is one lift which behaves the same as $\sigma$ (i.e., this lift splits and $A_{l+1}\leq B_{l+1}$ and $A_{l+1}<l+1$) and other lifts split $A_l-1$ times then all lifts at level $A_l+l$ grow forever.

\item If $l\leq A_l$ and $l\leq B_l$, then all lifts split at least $l-1$ times.
\end{enumerate}
\end{proposition}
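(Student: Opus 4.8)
The plan is to pass to the first-return map $g:=f^{k}$ and a Taylor expansion around a cycle point, and then to run the whole argument as bookkeeping on the two valuations $A_l=\nu_p(a_l-1)$ and $B_l=\nu_p(b_l)$ under a single lift. By the chain rule $a_l(x)=\prod_{j=0}^{k-1}f'(f^j(x))=g'(x)$ and $b_l(x)=\big(g(x)-x\big)/p^{l}$, so a point of $\sigma$ is a fixed point of $g$ modulo $p^{l}$, and lifting $\sigma$ to level $l+1$ is the same as locating the level-$(l+1)$ fixed points of $g$ in the ball $\mathbb{X}_1=x_1+p^{l}\Z_p$ (each one yields a $k$-cycle because $f$ permutes the balls $\mathbb{X}_i$). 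Expanding $g(x_1+u)=\sum_{j\ge 0}c_j u^{j}$, the coefficients $c_j$ are binomial combinations of the coefficients of $g$ and hence lie in $\Z_p$, with $c_0=g(x_1)$ and $c_1=a_l$. Putting $u=p^{l}t$ gives
\[
g(x_1+p^{l}t)-(x_1+p^{l}t)=p^{l}b_l+(a_l-1)p^{l}t+\sum_{j\ge 2}c_j p^{jl}t^{j}.
\]
Reducing modulo $p^{l+1}$ (legitimate since $jl\ge 2l\ge l+1$) and dividing by $p^{l}$ shows the lift condition is $b_l+(a_l-1)t\equiv 0\ (\mathrm{mod}\ p)$, which holds for every $t\in\{0,\dots,p-1\}$ because $\sigma$ splits; this recovers the $p$ lifts of Proposition~\ref{prop:behavior of lifts for p>=3}. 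The same expansion yields the transfer rules
\[
b_{l+1}(t)=\frac{b_l+(a_l-1)t}{p}+c_2\,p^{\,l-1}t^{2}+\cdots,\qquad a_{l+1}(t)-1=(a_l-1)+2c_2\,p^{l}t+\cdots,
\]
whose displayed remainders have valuation $\ge l-1$ and $\ge l$ respectively (here $p\ge 3$ is used so that $2$ is a unit). These two formulas drive everything; a preliminary point to settle is that $A_l$ and $B_l$ are well defined on all of $\mathbb{X}_{\sigma}$, independent of the chosen cycle point within the relevant valuation ranges, so that ``splits'' and ``grows'' are unambiguous.

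\textbf{Case (1).} Since $B_l<A_l$ we get $\nu_p\big(b_l+(a_l-1)t\big)=B_l$ for \emph{every} $t$, and $B_l<l$ makes the quadratic remainder negligible, so $B_{l+1}=B_l-1$, while $A_{l+1}\ge\min(A_l,l)\ge B_l+1>B_{l+1}$. Hence each lift again meets the Case~(1) hypotheses with $B$ decreased by one; by induction $B$ drops by one per level and reaches $0$ at level $l+B_l$, where, $A$ having stayed $\ge 1$, the condition $a\equiv 1,\ b\not\equiv 0\ (\mathrm{mod}\ p)$ of Definition~\ref{def:movement for p>=3} holds and the cycles grow. That they grow forever follows from the standard fact that a lift of a growing cycle is again growing, which one verifies by the same Taylor computation applied to $g^{p}$.

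\textbf{Case (2).} Now the linear term competes. Because $A_l\le B_l$ there is a \emph{unique} residue $t_0\ (\mathrm{mod}\ p)$, the root of $b_l+(a_l-1)t\equiv 0\ (\mathrm{mod}\ p^{A_l+1})$, with $\nu_p\big(b_l+(a_l-1)t_0\big)\ge A_l+1$, while the other $p-1$ residues give valuation exactly $A_l$. For $t\ne t_0$ one finds $A_{l+1}=A_l$ (using $A_l<l$) and $B_{l+1}=A_l-1<A_{l+1}$, so these lifts land in Case~(1) at level $l+1$ with parameter $A_l-1$; by the Case~(1) count they split $A_l-2$ further times and grow at level $(l+1)+(A_l-1)=l+A_l$, which together with the initial split is $A_l-1$ splits. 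For $t=t_0$ one finds $A_{l+1}=A_l<l+1$ and $B_{l+1}\ge A_l=A_{l+1}$, i.e. exactly the Case~(2) hypothesis at level $l+1$: this lift behaves the same as $\sigma$, and the analysis recurses. This bifurcation is the main obstacle: isolating the single surviving residue $t_0$ and showing simultaneously that it preserves both $A_{l+1}\le B_{l+1}$ and $A_{l+1}<l+1$ while the remaining $p-1$ residues drop into Case~(1) requires the \emph{exact} valuations of $b_{l+1}$ and $a_{l+1}-1$, hence tight control of the $c_2$-remainders relative to $l$.

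\textbf{Case (3).} Here $A_l\ge l$ and $B_l\ge l$, the quadratic remainder is no longer negligible, and only inequalities survive, which is why the conclusion is the weaker ``at least $l-1$'' bound. The efficient device is to prove by induction on $i$ the invariant $A_{l+i}\ge l$ and $B_{l+i}\ge l-i$: the transfer rules give $A_{l+1}\ge\min(A_l,l)=l$ and $B_{l+1}\ge\min(B_l,A_l,l)-1\ge l-1$, and the inductive step is identical. Since a cycle splits as soon as $A\ge 1$ and $B\ge 1$, the invariant forces splitting at every level $l+i$ with $l-i\ge 1$; counting from the first lift this is at least $l-1$ splits, as claimed.
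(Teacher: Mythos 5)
Note that the paper itself gives no proof of this proposition: it is imported verbatim from \cite{Desjardins2001polynomial}, so the only comparison possible is with that source, and your argument is essentially the standard one there — linearize $g=f^k$ about a cycle point, derive the transfer rules $b_{l+1}=\frac{b_l+(a_l-1)t}{p}+O(p^{l-1})$ and $a_{l+1}-1=(a_l-1)+O(p^{l})$, and run the valuation bookkeeping on $(A,B)$ case by case. Your case analysis is correct (in particular the exactness of $B_{l+1}=B_l-1$, resp.\ $A_{l+1}=A_l$, follows from your own remainder bounds $\ge l-1$ and $\ge l$ combined with $B_l<l$, resp.\ $A_l<l$, so the ``tight control'' you worry about in Case~(2) is already in hand), and the two points you defer — well-definedness of $A_l,B_l$ on $\mathbb{X}_\sigma$ in the relevant range, and that growing cycles at level $\ge 2$ grow forever — are exactly the standard auxiliary lemmas of the cited reference.
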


\section{Minimal decompositions of monomials on $\Z_2$}

From this section, we study the dynamical structure of monomials $f(x)=x^m$ on the ring of $p$-adic integers $\Z_p$ for $m \in \N$ through the minimal decomposition which consists of periodic points, minimal components and attracting basins, see Theorem \ref{thm:decomposition}.

When $m=1$, that is, $f(x)=x$, every point in $\Z_p$ is a fixed point, whence the minimal decomposition is trivial. Therefore from now on, we consider the cases $m\geq 2$.

For arbitrary prime number $p$, we have the following.

\begin{proposition}\label{prop:pZ_p attracting basin}
Let $m\geq 2$ be an integer and $f(x)=x^m$. Then, $f(x)$ has a fixed point 0 in the clopen set $p\Z_p$ with $p\Z_p$ lying in its attracting basin.
\end{proposition}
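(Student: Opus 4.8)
The plan is to verify two things: that $0$ is a fixed point of $f(x)=x^m$, and that every point of $p\Z_p$ converges to $0$ under iteration of $f$, with $p\Z_p$ being forward-invariant. The first is immediate since $f(0)=0^m=0$ for $m\ge 2$. For the rest, the natural tool is the $p$-adic valuation $\nu_p$, whose multiplicative property from the Proposition above controls how distances to $0$ behave under $f$.

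First I would take an arbitrary $x\in p\Z_p$, so that $\nu_p(x)\ge 1$, and compute $\nu_p(f(x))=\nu_p(x^m)=m\,\nu_p(x)$ using the first property of $\nu_p$. Since $m\ge 2$ and $\nu_p(x)\ge 1$, this gives $\nu_p(f(x))\ge 2\,\nu_p(x)\ge 2 > 1$, so $f(x)\in p\Z_p$ as well; this establishes that $p\Z_p$ is invariant (indeed $f(p\Z_p)\subseteq p^2\Z_p\subseteq p\Z_p$). Iterating the valuation identity gives $\nu_p\big(f^n(x)\big)=m^n\,\nu_p(x)$ for every $n\ge 1$.

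Next I would translate this valuation growth into metric convergence. Since $\nu_p(x)\ge 1$, we have $\nu_p\big(f^n(x)\big)=m^n\,\nu_p(x)\ge m^n\to\infty$ as $n\to\infty$, and therefore
\[
\big|f^n(x)-0\big|_p = \big|f^n(x)\big|_p = p^{-\nu_p(f^n(x))} \le p^{-m^n}\longrightarrow 0.
\]
Hence $f^n(x)\to 0$ for every $x\in p\Z_p$, which is exactly the statement that $p\Z_p$ lies in the attracting basin of the fixed point $0$. Finally, $p\Z_p$ is clopen because it is the ball of radius $p^{-1}$ about $0$, and its complement is the union of the remaining residue classes mod $p$, each of which is also open.

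I do not anticipate a serious obstacle here: the only subtlety is handling the case $x=0$ separately (where $\nu_p(x)=\infty$ and the orbit is constantly $0$, so convergence is trivial), and recording that the estimate $m^n\,\nu_p(x)\to\infty$ uses $m\ge 2$ in an essential way, since for $m=1$ the valuation would not grow at all. The whole argument is a direct computation with $\nu_p$, so the main work is simply assembling these elementary valuation estimates cleanly.
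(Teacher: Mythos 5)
Your proof is correct, but it takes a genuinely different route from the paper. The paper stays inside its cycle--lift framework: it observes that $\{0\}$ is a $1$-cycle at level $1$, computes $a_1(0)=f'(0)=m\cdot 0^{m-1}=0$, concludes that the cycle \emph{grows tails} in the sense of Definitions \ref{def:movement for p=2} and \ref{def:movement for p>=3}, and then invokes the general lift-behavior results (Propositions \ref{prop:behavior of lifts for p=2} and \ref{prop:behavior of lifts for p>=3}) to get the attracting basin. You instead argue directly with the valuation: $\nu_p(f^n(x))=m^n\,\nu_p(x)\to\infty$ for $x\in p\Z_p$, $x\neq 0$, hence $|f^n(x)|_p\le p^{-m^n}\to 0$, which is exactly convergence of every orbit in $p\Z_p$ to the fixed point $0$. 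Your version is more elementary and self-contained --- it needs only multiplicativity of $\nu_p$, avoids the black-box propositions entirely, and even yields an explicit (very fast, doubly exponential in the metric) rate of convergence; the paper's version is shorter given the machinery already set up and has the expository advantage of illustrating, on the simplest possible cycle, the growing-tails classification that drives all the later arguments. Your handling of the side conditions (forward-invariance $f(p\Z_p)\subseteq p^2\Z_p$, the case $x=0$, and where $m\ge 2$ is used) is careful and complete, so there is no gap.
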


\begin{proof}
Since $f(0)=0$, $\{0\}$ is a cycle of length 1 at level 1.
Now we compute the quantity $a_1$ for the cycle $\{0\}$, as defined in (\ref{a_l}). Since $f'(x)=mx^{m-1}$, then $a_1(0)=0$. Therefore, $\{0\}$ grows tails at level 1 by Definition \ref{def:movement for p=2} for $p=2$ and Definition \ref{def:movement for p>=3} for $p\geq 3$. Then, by statement 3 of Proposition \ref{prop:behavior of lifts for p>=3}, the statement is proved.
\end{proof}

In this section, we consider the case $p=2$. For integers $m\geq 2$, we divide into cases that $m\equiv 0 \pmod{2},\ m\equiv 1 \pmod{4}$ and $m\equiv -1 \pmod{4}$.

\begin{theorem}\label{thm:Z_2 m=0(2)}
Let $f(x)=x^m$ over $\Z_2$ and assume $m\geq 2$ with $m\equiv 0~(\mathrm{mod}~2)$. Then, the minimal decomposition of $\Z_2$ for $f(x)$ is
\[ \Z_2=\{0,1\}\bigsqcup\Big((2\Z_2-\{0\})\cup\big((1+2\Z_2)-\{1\}\big)\Big). \]
Here, $\{0,1\}$ is the set of fixed points and the other sets are the attracting basin of $\{0,1\}$.
\end{theorem}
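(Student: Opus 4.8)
The plan is to carry out the analysis at the first level (mod $2$) and to classify the two residue classes using the growing-tails criterion of Definition \ref{def:movement for p=2}. First I would observe that the reduction of $f$ modulo $2$ has exactly two fixed points, giving the two $1$-cycles $\sigma_0=\{0\}$ and $\sigma_1=\{1\}$ at level $1$, and that the associated balls $\mathbb{X}_{\sigma_0}=2\Z_2$ and $\mathbb{X}_{\sigma_1}=1+2\Z_2$ partition $\Z_2$. Hence it suffices to describe the dynamics of $f$ on each of these two clopen sets separately.

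Next I would compute the multiplier $a_1$ from (\ref{a_l}) for each cycle. Since $f'(x)=mx^{m-1}$, we get $a_1(0)=0$ and $a_1(1)=m$. The hypothesis $m\equiv 0~(\mathrm{mod}~2)$ enters precisely here: it forces $a_1(1)=m\equiv 0~(\mathrm{mod}~2)$ as well, so \emph{both} $\sigma_0$ and $\sigma_1$ grow tails at level $1$. (The assertion for $\sigma_0$ is exactly Proposition \ref{prop:pZ_p attracting basin}.)

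Then I would apply statement 5 of Proposition \ref{prop:behavior of lifts for p=2} to each growing-tails cycle. Applied to $\sigma_0$ it yields a single fixed point in $2\Z_2$, namely $0$, with all of $2\Z_2$ lying in its attracting basin; applied to $\sigma_1$ it yields a single fixed point in $1+2\Z_2$, namely $1$, with all of $1+2\Z_2$ lying in its attracting basin. Collecting the two periodic sets into $\mathcal{P}=\{0,1\}$ and the two basins into $(2\Z_2-\{0\})\cup\big((1+2\Z_2)-\{1\}\big)$ produces the claimed decomposition, with no minimal component occurring.

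The computation itself is short, so the only point requiring care is that the growing-tails proposition genuinely produces a \emph{unique} periodic orbit inside each ball, so that no further periodic points are hidden among the odd $2$-adic integers; uniqueness is forced because the whole ball lies in the attracting basin of the orbit it produces. As an independent check of convergence on $1+2\Z_2$, I would verify directly that for odd $x$ and even $N$ one has $\nu_2(x^{N}-1)=\nu_2(x-1)+\nu_2(x+1)+\nu_2(N)-1$; taking $N=m^k$ and using $\nu_2(m)\geq 1$ gives $\nu_2\big(f^k(x)-1\big)=\nu_2(x-1)+\nu_2(x+1)+k\,\nu_2(m)-1\to\infty$, so $f^k(x)=x^{m^k}\to 1$ and every odd $x\neq 1$ indeed lies in the attracting basin of $1$. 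This is consistent with $-1\mapsto 1$ (as $m$ is even), confirming that $1$ is the only periodic unit.
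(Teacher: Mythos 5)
Your proposal is correct and follows essentially the same route as the paper: both identify $\{0\}$ and $\{1\}$ as $1$-cycles at level $1$ with $a_1\equiv 0 \pmod 2$ (the latter using $m$ even), invoke statement 5 of Proposition \ref{prop:behavior of lifts for p=2} for the unit ball and Proposition \ref{prop:pZ_p attracting basin} for $2\Z_2$, and assemble the decomposition. Your extra lifting-the-exponent check $\nu_2(x^{m^k}-1)=\nu_2(x-1)+\nu_2(x+1)+k\,\nu_2(m)-1\to\infty$ is a valid (and nice) independent confirmation of the convergence on $1+2\Z_2$, but it is not needed beyond the cited propositions.
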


\begin{proof}
Since $m$ is even, we obtain that $f(1)=1$ and $a_1(1)=f'(1)\equiv 0~(\mathrm{mod}~2)$ by notation (\ref{a_l}). So, $\{1\}$ is a cycle of length 1 at level 1 which grows tails by Definition \ref{def:movement for p=2}. By property 5 of Proposition \ref{prop:behavior of lifts for p=2}, $f(x)$ has a fixed point 1 in the clopen set $1+2\Z_2$ with $1+2\Z_2$ lying in its attracting basin. With Proposition \ref{prop:pZ_p attracting basin} the proof is completed.
\end{proof}

Consider the case $m\equiv 1~(\mathrm{mod}~4)$.

\begin{proposition}\label{prop:Z_2 m=1(4)}
Let $f(x)=x^m$ over $\Z_2$ and assume $m\geq 2$ with $m\equiv 1~(\mathrm{mod}~4)$. Let $t=\nu_2(m-1)$. For every nonnegative integer $l$, there are strongly splitting 1-cycles $\{1+2^{l+2}\}$ and $\{-1+2^{l+2}\}$ at level $l+3$ such that all the lifts at level $t+l+2$ strongly grow.
\end{proposition}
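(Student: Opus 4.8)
The plan is to establish everything for the single representative $x_0=1+2^{l+2}$ at level $L:=l+3$, and then to recover the companion cycle $\{-1+2^{l+2}\}$ from the symmetry $f(-x)=-f(x)$. First I would check that $\{x_0\}$ is genuinely a $1$-cycle at level $L$. Expanding $x_0^m=(1+2^{l+2})^m$, the linear term is $m\cdot 2^{l+2}$; since $m$ is odd we have $m\cdot 2^{l+2}\equiv 2^{l+2}\pmod{2^{l+3}}$, while every higher term is divisible by $2^{2(l+2)}$ and hence by $2^{l+3}$ for $l\ge0$. Thus $x_0^m\equiv 1+2^{l+2}=x_0\pmod{2^L}$, so $\{x_0\}$ is a $1$-cycle at level $L$.

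Next I would compute the invariants $a_L,b_L$ of (\ref{a_l}) and (\ref{b_l}). For a $1$-cycle $a_L(x_0)=f'(x_0)=m\,x_0^{m-1}$; since $l\ge0$ forces $x_0\equiv 1\pmod4$ and $m-1$ is even, we get $x_0^{m-1}\equiv 1\pmod4$, whence $a_L(x_0)\equiv m\equiv 1\pmod4$. For $b_L$ I would factor $x_0^m-x_0=x_0\,(x_0^{m-1}-1)$ and use that $x_0$ is a unit together with the standard $2$-adic valuation identity $\nu_2(x_0^{m-1}-1)=\nu_2(x_0-1)+\nu_2(m-1)$, valid since $x_0\equiv 1\pmod4$ and $m-1$ is even. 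This gives $\nu_2(x_0^m-x_0)=(l+2)+t$, hence $\nu_2(b_L(x_0))=(l+2+t)-(l+3)=t-1\ge1$. Together with $a_L(x_0)\equiv 1\pmod4$, Definition \ref{def:movement for p=2} shows $\{x_0\}$ strongly splits at level $L$, with splitting order $s=t-1$.

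To invoke statement 2 of Proposition \ref{prop:behavior of lifts for p=2} I must verify that $\nu_2(b_L(x))=t-1$ holds uniformly for every $x\in\mathbb{X}_\sigma=x_0+2^L\Z_2$, not merely at $x_0$; this uniform control is the one delicate point, since the proposition's conclusion presupposes a constant splitting order across the whole clopen set. Writing such an $x$ as $1+2^{l+2}w$ with $w$ odd, we still have $x\equiv 1\pmod4$ and $\nu_2(x-1)=l+2$, so the same valuation identity yields $\nu_2(x^{m-1}-1)=(l+2)+t$ and therefore $\nu_2(b_L(x))=t-1$ throughout the ball. With $L=l+3\ge2$ and $s=t-1\ge1$, statement 2 of Proposition \ref{prop:behavior of lifts for p=2} then gives that the lifts keep splitting up to level $L+s=(l+3)+(t-1)=t+l+2$, at which they all strongly grow, as claimed.

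Finally, for $\{-1+2^{l+2}\}$ I would avoid redoing the computation by exploiting that $m$ is odd, so $f(-x)=-f(x)$ and $f$ commutes with the isometric involution $x\mapsto -x$. Since $-(1+2^{l+2})=-1-2^{l+2}\equiv -1+2^{l+2}\pmod{2^L}$, this involution carries the ball $\mathbb{X}_\sigma$ onto $-1+2^{l+2}+2^L\Z_2$ and conjugates the two subsystems. Hence $\{-1+2^{l+2}\}$ is likewise a strongly splitting $1$-cycle at level $L$ whose lifts all strongly grow at level $t+l+2$, completing the argument.
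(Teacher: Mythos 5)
Your proof is correct, and while it follows the same structural reduction as the paper --- show that $\{\pm 1+2^{l+2}\}$ are strongly splitting $1$-cycles at level $l+3$, establish the uniform valuation $\nu_2(b_{l+3}(x))=t-1$ on the whole clopen set, and invoke statement 2 of Proposition \ref{prop:behavior of lifts for p=2} to conclude strong growth at level $(l+3)+(t-1)=t+l+2$ --- your computational engine is genuinely different. Where the paper twice expands $(1+2^{l+2}+2^{l+3}a)^{2^tk+1}$ binomially and estimates the $2$-adic valuation of each summand, you factor $x^m-x=x\,(x^{m-1}-1)$ and apply the lifting-the-exponent identity $\nu_2(x^n-1)=\nu_2(x-1)+\nu_2(n)$ for $x\equiv 1\pmod{4}$ (equivalently, the procyclic structure of $1+4\Z_2$); this identity, usually stated for integers, does hold verbatim over $\Z_2$ (the standard squaring induction goes through, or one can argue by density of $\Z$ and local constancy of valuations), and it gives the exact uniform value $t-1$ on $1+2^{l+2}+2^{l+3}\Z_2$ in one line --- correctly identifying, as the paper does, that uniformity over the ball and not just at the cycle point is the delicate hypothesis of the cited proposition. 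You also dispatch the companion cycle via the conjugacy $x\mapsto -x$ (since $m$ is odd, $f(-x)=-f(x)$, $a_L$ is preserved because $m-1$ is even, and $b_L$ merely changes sign, so all splitting data transfer), where the paper simply says the computation is similar; and you verify directly that $1+2^{l+2}$ is fixed modulo $2^{l+3}$ rather than deriving it as a lift of the strongly splitting cycle $\{1\}$ at level $l+2$, which is an equivalent and harmless shortcut. The trade-off: your route is shorter and more conceptual but leans on LTE as an external lemma, while the paper's is longer but entirely self-contained; one small point worth stating explicitly in your write-up is that $m\equiv 1\pmod{4}$ forces $t\geq 2$, which is what guarantees $s=t-1\geq 1$ as the cited proposition requires (you use this implicitly when writing $t-1\geq 1$).
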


\begin{proof}
Since $m\equiv 1~(\mathrm{mod}~4)$ we have $t \geq 2$. Write $m=2^t k+1$, then $k$ is odd and $f(x)=x^{2^t k+1}$. We compute that $f(\pm 1)=(\pm 1)^{2^t k+1}=\pm 1$. So, $\pm 1$ are fixed points at any level. Let $l$ be any nonnegative integer and view $\{1\}$ and $\{-1\}$ be cycles of length 1 at level $l+2$.

We compute the quantity $a_{l+2}$, as defined in (\ref{a_l}),
\begin{align*}
a_{l+2}(\pm 1)=f'(\pm 1)=(2^tk+1)(\pm 1)^{2^t k}\equiv 1~(\mathrm{mod}~4),
\end{align*}
and the quantity $b_{l+2}$, as defined in (\ref{b_l}),
\begin{align*}
b_{l+2}(\pm 1)=\frac{f(\pm 1)-(\pm 1)}{2^{l+2}}\equiv 0~(\mathrm{mod}~2).
\end{align*}
By Definition \ref{def:movement for p=2}, the cycles $\{1\}$ and $\{-1\}$ strongly split at level $l+2$.

The lifts of $\{1\}$ and $\{-1\}$ from level $l+2$ to level $l+3$ are $\{1, 1+2^{l+2}\}$ and $\{-1, -1+2^{l+2}\}$, respectively. The points $\pm 1+2^{l+2}$ are fixed points at level $l+3$.

Now we compute the quantity $a_{l+3}$ for the above points, as defined in (\ref{a_l}),
\begin{align*}
a_{l+3}(\pm 1+2^{l+2})&=(2^tk+1)(\pm 1+2^{l+2})^{2^tk}\\
&\equiv 1~(\mathrm{mod}~4).
\end{align*}
We compute the quantity $b_{l+3}$ for the above cycles, as defined in (\ref{b_l}),
\begin{align*}
f(\pm 1+2^{l+2})-(\pm 1+2^{l+2})&=(\pm 1+2^{l+2})^{2^tk+1}-(\pm 1+2^{l+2})\\
&=\sum_{j=0}^{2^tk+1}\tbinom{2^tk+1}{j}(\pm 1)^{2^tk+1-j}(2^{l+2})^j-(\pm 1+2^{l+2})\\
&\equiv 0~(\mathrm{mod}~2^{l+4}),
\end{align*}
So, $b_{l+3}(\pm 1+2^{l+2})=\frac{f(1+2^{l+2})-(1+2^{l+2})}{2^{l+3}}\equiv 0~(\mathrm{mod}~2)$. Hence, the 1-cycles $\{1+2^{l+2}\}$ and $\{-1+2^{l+2}\}$ strongly split at level $l+3$.

If we show the following expression, by statement 2 of Proposition \ref{prop:behavior of lifts for p=2}, the proof is complete:
\[ \nu_2\big(b_{l+3}(x)\big)=t-1\ \text{for all}\ x\in \pm 1+2^{l+2}+2^{l+3}\Z_2. \]
Suppose that $x=1+2^{l+2}+2^{l+3}a\in 1+2^{l+2}+2^{l+3}\Z_2$ with $a\in\Z_2$. Then
\begin{align*}
f(1+&2^{l+2}+2^{l+3}a)-(1+2^{l+2}+2^{l+3}a)\\
&=(1+2^{l+2}+2^{l+3}a)^{2^tk+1}-(1+2^{l+2}+2^{l+3}a)\\
&=\sum_{j=0}^{2^tk+1}\tbinom{2^tk+1}{j}(1+2^{l+2})^{2^tk+1-j}(2^{l+3}a)^j-(1+2^{l+2}+2^{l+3}a).
\end{align*}
For an integer $j\geq 2$, the summand has the valuation
\begin{align*}
\nu_2\big(\tbinom{2^tk+1}{j}&(1+2^{l+2})^{2^tk+1-j}(2^{l+3}a)^j\big)\\
&\geq t+\frac{j-1}{2}-1+j(l+3)-(j-1)\\
&\geq t+2l+4.
\end{align*}
So, we obtain
\begin{align*}
f(1+&2^{l+2}+2^{l+3}a)-(1+2^{l+2}+2^{l+3}a)\\
&\equiv(1+2^{l+2})^{2^tk+1}+\tbinom{2^tk+1}{1}(1+2^{l+2})^{2^tk}(2^{l+3}a)-(1+2^{l+2}+2^{l+3}a)\\
&\equiv\big((1+2^{l+2})^{2^tk}-1\big)(1+2^{l+2}+2^{l+3}a)\\
&\equiv\big(\sum_{j=0}^{2^tk}\tbinom{2^tk}{j}(2^{l+2})^j-1\big)(1+2^{l+2}+2^{l+3}a)~(\mathrm{mod}~2^{t+l+3}).
\end{align*}
For an integer $j\geq3$, the summand has the valuation
\begin{align*}
\nu_2\big(\tbinom{2^tk}{j}(2^{l+2})^j\big)&\geq t+\frac{j-1}{2}-1+j(l+2)-(j-1)\\
&\geq t+3l+4.
\end{align*}
So, we obtain
\begin{align*}
f(1+&2^{l+2}+2^{l+3}a)-(1+2^{l+2}+2^{l+3}a)\\
&\equiv \big(\tbinom{2^tk}{1}2^{l+2}+\tbinom{2^tk}{2}(2^{l+2})^2\big)(1+2^{l+2}+2^{l+3}a)\\
&\equiv 2^{t+l+2}~(\mathrm{mod}~2^{t+l+3}).
\end{align*}
Hence, $\nu_2\big(b_{l+3}(x)\big)=\nu_2\big(\frac{f(1+2^{l+2}+2^{l+3}a)-(1+2^{l+2}+2^{l+3}a)}{2^{l+3}}\big)=t-1$.

When $x=-1+2^{l+2}+2^{l+3}a\in -1+2^{l+2}+2^{l+3}\Z_2$ with $a\in\Z_2$, we can prove it similarly. Therefore the proof is completed.
\end{proof}

For the case $m\equiv -1 \pmod{4}$, we need the following.

\begin{proposition}[\cite{FanLiao2011Minimal}, Proposition 7]\label{ws wg and ss}
Let $\sigma$ be a weakly splitting cycle of $f(x)$ at level $n\geq 2$. Then, one lift is a weakly splitting cycle at level $n+1$ and the other one weakly grows and then strongly splits.
\end{proposition}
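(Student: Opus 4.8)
The plan is to analyze the lift of $\sigma=\{x_1,\dots,x_k\}$ through the first-return map $g=f^k$ on a single ball of the cycle. By Definition \ref{def:movement for p=2}, weak splitting at level $n$ means $a_n\equiv 3~(\mathrm{mod}~4)$ and $b_n\equiv 0~(\mathrm{mod}~2)$, where $a_n=a_n(x_1)$ and $b_n=b_n(x_1)$ are as in (\ref{a_l}) and (\ref{b_l}). Since a lift of $\sigma$ has length $k$ precisely when $g$ fixes the corresponding point modulo $2^{n+1}$, I would first locate the fixed points of $g$ among the two lifts $x_1$ and $x_1+2^n$ of $x_1$ modulo $2^{n+1}$. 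Writing the Taylor expansion $g(x_1+h)=\sum_{i\ge 0}c_ih^i$ with $c_0=g(x_1)=x_1+2^nb_n$, $c_1=g'(x_1)=a_n$, and all $c_i\in\Z_2$, and setting $h=2^ns$ with $s\in\{0,1\}$, every term with $i\ge 2$ has valuation $\ge 2n\ge n+1$. Hence $g(x_1+2^ns)\equiv x_1+2^n(b_n+a_ns)~(\mathrm{mod}~2^{n+1})$, and because $a_n-1$ and $b_n$ are both even, both $s=0$ and $s=1$ yield fixed points. Thus $\sigma$ splits into two $k$-cycles, the orbits of $x_1$ and of $x_1+2^n$.

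Next I would determine the movement type of each lift. For the $a$-value, note that $a_{n+1}$ is the same product (\ref{a_l}) evaluated along each orbit; since $x_1+2^n\equiv x_1~(\mathrm{mod}~2^n)$ and $n\ge 2$ forces $2^n\equiv 0~(\mathrm{mod}~4)$, the factors $f'(f^j(\cdot))$ agree modulo $4$ along the two orbits, so $a_{n+1}\equiv a_n\equiv 3~(\mathrm{mod}~4)$ for both lifts: both are ``weak''. The choice between weakly splitting and weakly growing therefore rests entirely on the parity of $b_{n+1}$. Carrying the expansion of $g$ one power of $2$ further, to modulus $2^{n+2}$ (again killing the $i\ge 2$ terms, since $2n\ge n+2$), I obtain $b_{n+1}(x_1)=b_n/2$ exactly and $b_{n+1}(x_1+2^n)\equiv (b_n+a_n-1)/2~(\mathrm{mod}~2)$.

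The decisive observation is that these two parities are complementary: their sum equals $b_n+(a_n-1)/2$, which is odd because $b_n$ is even while $a_n-1\equiv 2~(\mathrm{mod}~4)$ makes $(a_n-1)/2$ odd. Hence exactly one lift has $b_{n+1}\equiv 0~(\mathrm{mod}~2)$ (weakly splitting) and the other has $b_{n+1}\equiv 1~(\mathrm{mod}~2)$ (weakly growing), which is the asserted dichotomy. Applying statement 3 of Proposition \ref{prop:behavior of lifts for p=2} to the weakly growing lift then shows that its own lift strongly splits, completing the statement. The main point to get right is the precision bookkeeping: one must check that the stated congruences for $a_{n+1}$ and $b_{n+1}$ are independent of the chosen cycle representative, so that the movement type is well defined on each lift, and that the higher-order Taylor terms are genuinely negligible to the required modulus — which is exactly where the hypothesis $n\ge 2$ enters.
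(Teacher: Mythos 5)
Your argument is correct: the paper itself states this proposition as a citation of \cite{FanLiao2011Minimal} (Proposition 7) without proof, and your first-return-map computation --- Taylor-expanding $g=f^k$ at $x_1$, checking both points $x_1$ and $x_1+2^n$ are fixed mod $2^{n+1}$, showing $a_{n+1}\equiv 3\pmod 4$ for both lifts, and deducing from $b_{n+1}(x_1)=b_n/2$ and $b_{n+1}(x_1+2^n)\equiv (b_n+a_n-1)/2\pmod 2$ that the two parities are complementary because $(a_n-1)/2$ is odd --- is essentially the standard Fan--Liao argument, with the hypothesis $n\geq 2$ entering exactly where you say (so that the $i\geq 2$ Taylor terms have valuation $2n\geq n+2$). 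The representative-independence you flag is the routine fact that $b_{n+1}$ changes by the even quantity $(a_{n+1}-1)u$ within a ball and by the odd unit factor $f'(x)$ along the cycle, so the dichotomy is well posed and your proof is complete.
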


Now, we are ready to prove the Proposition below.

\begin{proposition}\label{prop:Z_2 m=-1(4)}
Let $f(x)=x^m$ over $\Z_2$ and assume $m\geq 2$ with $m\equiv -1~(\mathrm{mod}~4)$. Let $t=\nu_2(m+1)$. For every nonnegative integer $l$, there are exactly two cycles $\{1+2^{l+2}\}$ and $\{-1+2^{l+2}\}$ of length 1 which weakly grow at level $l+3$. All the lifts of $\{1+2^{l+2}\}$ and $\{-1+2^{l+2}\}$ at level $t+l+3$ strongly grow.
\end{proposition}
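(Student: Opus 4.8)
The plan is to follow the template of Proposition \ref{prop:Z_2 m=1(4)}, adapting every computation to the ``weak'' regime. I would write $m=2^tk-1$ with $k$ odd, so that $t=\nu_2(m+1)\geq 2$ and, crucially, $\nu_2(m-1)=1$ (since $m-1=2(2^{t-1}k-1)$ with $2^{t-1}k-1$ odd). As $m$ is odd, $f(\pm1)=\pm1$, so $\{1\}$ and $\{-1\}$ are fixed points at every level. Viewing them as $1$-cycles at level $l+2$, I compute $a_{l+2}(\pm1)=f'(\pm1)=m\equiv 3\pmod 4$ and $b_{l+2}(\pm1)=0$, so by Definition \ref{def:movement for p=2} both cycles \emph{weakly split} at level $l+2$. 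Lifting to level $l+3$, Proposition \ref{ws wg and ss} forces one lift of each to weakly split and the other to weakly grow, and I would pin down the weakly growing branch explicitly: the expansion $f(\pm1+2^{l+2})-(\pm1+2^{l+2})=(m-1)2^{l+2}+\binom{m}{2}2^{2(l+2)}+\cdots$ has leading valuation exactly $l+3$ because $\nu_2(m-1)=1$. This shows $\pm1+2^{l+2}$ are fixed points at level $l+3$ with $a_{l+3}\equiv m\equiv 3\pmod 4$ and $b_{l+3}\equiv 1\pmod 2$, so $\{1+2^{l+2}\}$ and $\{-1+2^{l+2}\}$ weakly grow, as claimed.

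For the \emph{exactly two} assertion I would argue that a weakly growing cycle at level $l+3$ can arise only as a lift of a weakly splitting cycle at level $l+2$: inspecting Proposition \ref{prop:behavior of lifts for p=2}, a strongly growing, strongly splitting, weakly growing, or growing-tails cycle never produces a weakly growing lift, while a weakly splitting cycle produces exactly one (Proposition \ref{ws wg and ss}). It then suffices to show that the only weakly splitting cycles at level $l+2$ are $\{1\}$ and $\{-1\}$. I would prove this by induction on the level: the same computation above shows $\{1\},\{-1\}$ stay weakly splitting at every level and are the unique weakly splitting lifts of themselves, and by the same taxonomy a weakly splitting cycle can only descend from a weakly splitting cycle; the base case at level $2$ is checked directly, where modulo $4$ the only cycles in $\Z_2^*$ are $\{1\},\{3\}=\{-1\}$, everything in $2\Z_2$ lying in the attracting basin of $0$ by Proposition \ref{prop:pZ_p attracting basin}. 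Each of $\{1\},\{-1\}$ yields a single weakly growing lift, giving exactly two.

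Finally, to track these lifts to strong growth I would exploit $f^2(x)=x^{m^2}$. Since $\{1+2^{l+2}\}$ weakly grows at level $l+3$, statement 3 of Proposition \ref{prop:behavior of lifts for p=2} shows its lift is a single $2$-cycle that strongly splits at level $l+4$. I then compute $\nu_2\big(b_{l+4}(x)\big)$ uniformly over the invariant ball, which consists of the points $x=1+2^{l+2}v$ with $v\in\Z_2$ odd. Writing $M=m^2-1=2^{t+1}k(2^{t-1}k-1)$, so $\nu_2(M)=t+1$, the expansion $x^{M}-1=M2^{l+2}v+\binom{M}{2}2^{2(l+2)}v^2+\cdots$ has leading valuation $(t+1)+(l+2)=t+l+3$; hence $\nu_2(f^2(x)-x)=\nu_2(x^{M}-1)=t+l+3$ and $\nu_2\big(b_{l+4}(x)\big)=t-1$. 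By statement 2 of Proposition \ref{prop:behavior of lifts for p=2} the lifts then keep splitting until level $(l+4)+(t-1)=t+l+3$, at which they all strongly grow; the case of $\{-1+2^{l+2}\}$ is identical after writing $x=-1+2^{l+2}v$ and using that $M$ is even, so $x^M=(1-2^{l+2}v)^M$.

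The main obstacle is the uniform valuation estimate $\nu_2(b_{l+4})=t-1$: I must control every higher binomial term $\binom{M}{j}2^{j(l+2)}v^j$ with $j\geq 2$ and verify its valuation strictly exceeds $t+l+3$ for all $l\geq 0$ and all $t\geq 2$. Using $\nu_2\big(\binom{M}{j}\big)\geq\nu_2(M)-\nu_2(j)$, this reduces to the inequality $(j-1)(l+2)>\nu_2(j)$, which holds for every $j\geq 2$; the same care guarantees that the $j=1$ term genuinely dominates rather than cancelling, so the leading valuation is attained. A secondary subtlety is making the inductive ``spine'' classification behind the count of exactly two weakly growing cycles fully rigorous, which is why the taxonomy of lift types in Proposition \ref{prop:behavior of lifts for p=2} must be invoked at each step.
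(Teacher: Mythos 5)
Your proposal is correct, and its skeleton coincides with the paper's: weak splitting of $\{1\},\{-1\}$ at level $l+2$, weak growth of $\{\pm 1+2^{l+2}\}$ at level $l+3$ via Proposition \ref{ws wg and ss}, strong splitting of the resulting $2$-cycles at level $l+4$, and the uniform valuation $\nu_2\big(b_{l+4}(x)\big)=t-1$ fed into statement 2 of Proposition \ref{prop:behavior of lifts for p=2} to get strong growth at level $t+l+3$. You differ in two worthwhile ways. First, your central computation is cleaner: the paper expands $f^2(x)-x$ with $x=1+2^{l+2}a+2^{l+4}b$ in two nested binomial stages with rather opaque valuation bounds, whereas you factor $f^2(x)-x=x\,(x^{M}-1)$ with $M=m^2-1=2^{t+1}k(2^{t-1}k-1)$, note $x$ is a unit, and read off the leading valuation $\nu_2(M)+(l+2)=t+l+3$ from a single expansion, controlling the tail via $\nu_2\big(\tbinom{M}{j}\big)\geq\nu_2(M)-\nu_2(j)$ and the elementary inequality $(j-1)(l+2)>\nu_2(j)$; this is both shorter and easier to verify than the paper's estimates. (Your description of the invariant set as $\{1+2^{l+2}v:v\ \text{odd}\}$ agrees with the paper's $(\pm 1+2^{l+2}+2^{l+4}\Z_2)\cup(\pm 1+2^{l+2}+2^{l+3}+2^{l+4}\Z_2)$, and your use of evenness of $M$ handles the $-1$ branch correctly.) Second, you actually prove the ``exactly two'' clause: the paper only exhibits the two weakly growing cycles and never rules out others, while your taxonomy-plus-induction argument — weakly growing cycles arise only as lifts of weakly splitting ones, and the unique weakly splitting spine consists of the balls around the genuine fixed points $\pm 1$ (where $b\equiv 0$ identically) — closes that gap, modulo the small observation that a strongly growing cycle cannot later produce a weakly growing descendant, which follows from minimality of the subsystem in statement 1 (a weakly growing descendant would force a strong split, contradicting a single cycle at each level). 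Your direct check that $b_{l+3}\equiv 1\pmod 2$ via $\nu_2(m-1)=1$ is also a nice independent confirmation of what the paper deduces purely from Proposition \ref{ws wg and ss}.
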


\begin{proof}
Since $m\equiv -1~(\mathrm{mod}~4)$, we have $t \geq 2$. Write $m=2^t k-1$, then $k$ is odd and $f(x)=x^{2^t k-1}$. We compute that $f(\pm 1)=(\pm 1)^{2^t k-1}=\pm 1$. So, $\pm 1$ are fixed points at any level. Let $l$ be any nonnegative integer and view $\{1\}$ and $\{-1\}$ be cycles of length 1 at level $l+2$.

We compute the quantity $a_{l+2}$, as defined in (\ref{a_l}),
\begin{align*}
a_{l+2}(\pm 1)=f'(\pm 1)=(2^tk-1)(\pm 1)^{2^tk-2}\equiv 3~(\mathrm{mod}~4),
\end{align*}
and the quantity $b_{l+2}$, as defined in (\ref{b_l}),
\begin{align*}
b_{l+2}(\pm 1)=\frac{f(\pm 1)-(\pm 1)}{2^{l+2}}\equiv 0~(\mathrm{mod}~2).
\end{align*}
So, the cycles $\{1\}$ and $\{-1\}$ weakly split at level $l+2$.

The lifts of $\{1\}$ and $\{-1\}$ from level $l+2$ to level $l+3$ are $\{1, 1+2^{l+2}\}$ and $\{-1, -1+2^{l+2}\}$, respectively. The points $1+2^{l+2}$ and $-1+2^{l+2}$ are fixed points at level $l+3$ and by Proposition \ref{ws wg and ss}, weakly grow at level $l+3$. Hence, the lifts at level $l+4$ are $\{1+2^{l+2}, 1+2^{l+2}+2^{l+3}\}$ and $\{-1+2^{l+2}, -1+2^{l+2}+2^{l+3}\}$, respectively, which strongly split.

If we show the following expression, by statement 2 of Proposition \ref{prop:behavior of lifts for p=2}, the proof is complete:
\[ \nu_2\big(b_{l+4}(x)\big)=t-1$ for all $x\in(\pm 1+2^{l+2}+2^{l+4}\Z_2)\cup(\pm 1+2^{l+2}+2^{l+3}+2^{l+4}\Z_2). \]

Suppose that $x=1+2^{l+2}a+2^{l+4}b\in (1+2^{l+2}+2^{l+4}\Z_2)\cup(1+2^{l+2}+2^{l+3}+2^{l+4}\Z_2)$ with $a=1$ or 3 and $b\in\Z_2$. Since $ f^2(x)=x^{2^{t+1}k(2^{t-1}k-1)+1},$ we compute that
\begin{align*}
f^2(1+&2^{l+2}a+2^{l+4}b)-(1+2^{l+2}a+2^{l+4}b)\\
&=(1+2^{l+2}a+2^{l+4}b)^{2^{t+1}k(2^{t-1}k-1)+1}-(1+2^{l+2}a+2^{l+4}b)\\
&=\sum_{j=0}^{2^{t+1}k(2^{t-1}k-1)+1}\tbinom{2^{t+1}k(2^{t-1}k-1)+1}{j}(1+2^{l+2}a)^{2^{t+1}k(2^{t-1}k-1)+1-j}(2^{l+4}b)^j\\
&\quad-(1+2^{l+2}a+2^{l+4}b).
\end{align*}
For an integer $j\geq 2$, the summand has the valuation
\begin{align*}
\nu_2\big(\tbinom{2^{t+1}k(2^{t-1}k-1)+1}{j}&(1+2^{l+2}a)^{2^{t+1}k(2^{t-1}k-1)+1-j}(2^{l+4}b)^j\big)\\
&\geq (t+1)+\frac{j-1}{2}-1+j(l+4)-(j-1)\\
&\geq t+2l+7.
\end{align*}
So, we obtain
\begin{align*}
f^2(1+&2^{l+2}a+2^{l+4}b)-(1+2^{l+2}a+2^{l+4}b)\\
&\equiv(1+2^{l+2}a)^{2^{t+1}k(2^{t-1}k-1)+1}\\
&\quad +\tbinom{2^{t+1}k(2^{t-1}k-1)+1}{1}(1+2^{l+2}a)^{2^{t+1}k(2^{t-1}k-1)}(2^{l+4}b)\\
&\quad-(1+2^{l+2}a+2^{l+4}b)\\
&\equiv ((1+2^{l+2}a)^{2^{t+1}k(2^{t-1}k-1)}-1)(1+2^{l+2}a+2^{l+4}b)\\
&\equiv \Big(\sum_{j=0}^{2^{t+1}k(2^{t-1}k-1)}\tbinom{2^{t+1}k(2^{t-1}k-1)}{j}(2^{l+2}a)^j-1\Big)(1+2^{l+2}a+2^{l+4}b)\\
&\quad (\mathrm{mod}~2^{t+l+4}).
\end{align*}
For an integer $j\geq3$, the summand has the valuation
\begin{align*}
\nu_2\big(\tbinom{2^{t+1}k(2^{t-1}k-1)}{j}(2^{l+2}a)^j\big)&\geq (t+1)+\frac{j-1}{2}-1+j(l+2)-(j-1)\\
&\geq t+3l+5.
\end{align*}
So, we obtain
\begin{align*}
f^2(1+&2^{l+2}a+2^{l+4}b)-(1+2^{l+2}a+2^{l+4}b)\\
&\equiv \big(\tbinom{2^{t+1}k(2^{t-1}k-1)}{1}2^{l+2}a+\tbinom{2^{t+1}k(2^{t-1}k-1)}{2}(2^{l+2}a)^2\big)(1+2^{l+2}a+2^{l+4}b)\\
&\equiv 2^{t+l+3}~(\mathrm{mod}~2^{t+l+4}).
\end{align*}
Hence, $\nu_2\big(b_{l+4}(x)\big)=\nu_2\big(\frac{f^2(1+2^{l+2}a+2^{l+4}b)-(1+2^{l+2}a+2^{l+4}b)}{2^{l+4}}\big)=t-1$.

When $x=-1+2^{l+2}a+2^{l+4}b\in (-1+2^{l+2}+2^{l+4}\Z_2)\cup(-1+2^{l+2}+2^{l+3}+2^{l+4}\Z_2)$ with $a=1$ or 3 and $b\in\Z_2$, we can prove it similarly.
Therefore the proof is complete.
\end{proof}

By Propositions \ref{prop:pZ_p attracting basin}, \ref{prop:Z_2 m=1(4)} and \ref{prop:Z_2 m=-1(4)}, we conclude that the following is true.

\begin{theorem}
Let $f(x)=x^m$ over $\Z_2$ and assume $m\geq 2$ with $m\equiv 1~(\mathrm{mod}~2)$. Let $l$ be a nonnegative integer.
\begin{enumerate}
\item If $m\equiv 1~(\mathrm{mod}~4)$, let $t=\nu_2(m-1)$. Then, the minimal decomposition of $\Z_2$ for $f(x)$ is
\[ \Z_2=\{0,\pm 1\}\bigsqcup \big(\bigcup_{l\geq 0}\bigcup_{a=0}^{2^{t-1}-1}\bigcup_{i=1}^2 M_{l,a,i}\big) \bigsqcup (2\Z_2-\{0\}), \]
where
\begin{align*}
M_{l,a,1}&=1+2^{l+2}+2^{l+3}a+2^{t+l+2}\Z_2~\mathrm{and}\\
M_{l,a,2}&=-1+2^{l+2}+2^{l+3}a+2^{t+l+2}\Z_2.
\end{align*}
Here, $\{0,\pm 1\}$ is the set of fixed points, $M_{l,a,i}$'s are the minimal components, and $2\Z_2-\{0\}$ is the attracting basin of the fixed point 0.

\item If $m\equiv -1~(\mathrm{mod}~4)$, let $t=\nu_2(m+1)$. Then, the minimal decomposition of $\Z_2$ for $f(x)$ is
\[ \Z_2=\{0,\pm 1\}\bigsqcup \big(\bigcup_{l\geq 0}\bigcup_{a=0}^{2^{t-1}-1}\bigcup_{i=1}^2 M_{l,a,i}\big) \bigsqcup (2\Z_2-\{0\}), \]
where
\begin{align*}
M_{l,a,1}&=(1+2^{l+2}+2^{l+4}a+2^{t+l+3}\Z_2)\\
&\quad \cup\big((1+2^{l+2}+2^{l+4}a)^{-1}+2^{t+l+2}+2^{t+l+3}\Z_2\big)~\mathrm{and}\\
M_{l,a,2}&=(-1+2^{l+2}+2^{l+4}a+2^{t+l+3}\Z_2)\\
&\quad \cup\big((-1+2^{l+2}+2^{l+4}a)^{-1}+2^{t+l+2}+2^{t+l+3}\Z_2\big).
\end{align*}
Here, $\{0,\pm 1\}$ is the set of fixed points, $M_{l,a,i}$'s are minimal components, and $2\Z_2-\{0\}$ is the attracting basin of the fixed point 0.
\end{enumerate}
\end{theorem}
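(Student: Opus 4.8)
The plan is to glue together the local dynamical pictures from Propositions \ref{prop:pZ_p attracting basin}, \ref{prop:Z_2 m=1(4)} and \ref{prop:Z_2 m=-1(4)} into the global decomposition and then check that the pieces tile $\Z_2$ without overlap. First I would record the periodic set. Since $m$ is odd, $0^m=0$ and $(\pm 1)^m=\pm 1$, so $\{0,\pm 1\}$ are fixed. By Proposition \ref{prop:pZ_p attracting basin} all of $2\Z_2$ lies in the attracting basin of $0$, so $2\Z_2-\{0\}$ carries no further periodic points; and once I show that every unit other than $\pm 1$ lies in an infinite minimal component, it follows that $\mathcal{P}=\{0,\pm 1\}$ exactly, because a minimal infinite subsystem contains no periodic point. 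Thus the three labelled sets really are, respectively, the periodic points, the attracting basin of $0$, and (after the next step) the minimal part.

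Next I would pin down the minimal components for $m\equiv 1~(\mathrm{mod}~4)$. Proposition \ref{prop:Z_2 m=1(4)} gives, for each $l\ge 0$, strongly splitting $1$-cycles $\{\pm 1+2^{l+2}\}$ at level $l+3$ whose lifts strongly grow at level $t+l+2$. I would track the splitting explicitly: a strongly splitting $1$-cycle $\{c\}$ at level $n$ lifts to the two $1$-cycles $\{c\}$ and $\{c+2^n\}$ at level $n+1$ (both halves keep a fixed point because $a_n$ is odd and $b_n$ even), so by statement 2 of Proposition \ref{prop:behavior of lifts for p=2} the $t-1$ successive splits produce exactly $2^{t-1}$ strongly growing $1$-cycles at level $t+l+2$, with centers $\pm 1+2^{l+2}+2^{l+3}a$ for $a=0,\dots,2^{t-1}-1$. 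Statement 1 of the same proposition then makes each invariant ball $\pm 1+2^{l+2}+2^{l+3}a+2^{t+l+2}\Z_2=M_{l,a,i}$ a minimal component. To finish this case I would verify the partition of the units minus $\{\pm 1\}$: writing a unit $x$ with $\nu_2(x-1)=l+2$ (resp. $\nu_2(x+1)=l+2$) fixes $l$, the residue of $(x\mp 1)/2^{l+2}$ modulo $2^t$ fixes a unique $a$, and the class mod $4$ separates the $i=1$ components (centers $\equiv 1$) from the $i=2$ components (centers $\equiv 3$); disjointness across distinct $l$ is immediate from the valuation, and $\pm 1$ have infinite valuation so lie in no ball.

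For $m\equiv -1~(\mathrm{mod}~4)$ the same strategy applies but the components are $2$-cycles, which is where the substance lies. Proposition \ref{prop:Z_2 m=-1(4)} gives the weakly growing $1$-cycles $\{\pm 1+2^{l+2}\}$ at level $l+3$ with all lifts strongly growing at level $t+l+3$. By statement 3 of Proposition \ref{prop:behavior of lifts for p=2} the weak growth turns each into a strongly splitting $2$-cycle $\{1+2^{l+2},\,1+2^{l+2}+2^{l+3}\}$ at level $l+4$, and statement 2 then yields $2^{t-1}$ strongly growing $2$-cycles at level $t+l+3$, each giving a two-ball minimal component $\mathbb{X}_\sigma=(c_1+2^{t+l+3}\Z_2)\cup(c_2+2^{t+l+3}\Z_2)$ via statement 1. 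Taking $c_1=1+2^{l+2}+2^{l+4}a$ (the representative whose $2^{l+3}$-digit is $0$), I would compute $c_2=f(c_1)$: from $\nu_2(c_1-1)=l+2$ one gets $c_1^{2^tk}\equiv 1+2^{t+l+2}\cdot(\mathrm{odd})\pmod{2^{t+l+3}}$, whence $f(c_1)=c_1^{2^tk-1}\equiv c_1^{-1}+2^{t+l+2}\pmod{2^{t+l+3}}$, matching the second center in $M_{l,a,i}$.

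The hardest step, and the one I would check most carefully, is the partition for $m\equiv -1~(\mathrm{mod}~4)$. Here I must show that as $a$ runs over $0,\dots,2^{t-1}-1$ the first balls cover precisely the units $x\equiv 1+2^{l+2}\pmod{2^{l+4}}$ with $\nu_2(x-1)=l+2$, while the second balls cover precisely those with $x\equiv 1+2^{l+2}+2^{l+3}\pmod{2^{l+4}}$. The first coverage is a direct residue count. The second needs that $a\mapsto c_1^{-1}+2^{t+l+2}\pmod{2^{t+l+3}}$ is injective with the correct image: injectivity holds because inversion is a bijection of the unit group and the $c_1$ are distinct mod $2^{t+l+3}$, and then a counting argument ($2^{t-1}$ balls onto the $2^{t-1}$ admissible residue classes) forces surjectivity. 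Combined with the equivariance $f(-x)=-f(x)$, which gives the symmetry $M_{l,a,2}=-M_{l,a,1}$ and so handles the units $\equiv 3~(\mathrm{mod}~4)$, this yields $\Z_2=\{0,\pm 1\}\sqcup\bigsqcup_{l,a,i}M_{l,a,i}\sqcup(2\Z_2-\{0\})$ and completes both cases.
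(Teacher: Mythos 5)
Your proposal is correct and takes essentially the same route as the paper's own proof: both glue Propositions \ref{prop:pZ_p attracting basin}, \ref{prop:Z_2 m=1(4)} and \ref{prop:Z_2 m=-1(4)} via Proposition \ref{prop:behavior of lifts for p=2} to identify the strongly growing lifts at level $t+l+2$ (resp.\ $t+l+3$) as the minimal balls, and your explicit splitting count, the verification $f(c_1)\equiv c_1^{-1}+2^{t+l+2}~(\mathrm{mod}~2^{t+l+3})$, and the valuation-based tiling argument simply make rigorous the enumeration of lifts that the paper asserts without detail. One harmless imprecision: the symmetry $f(-x)=-f(x)$ gives $-M_{l,a,1}=M_{l,a',2}$ only after re-indexing $a$ and swapping the two balls (since $-(1+2^{l+2}+2^{l+4}a)\equiv -1+2^{l+2}+2^{l+3}~(\mathrm{mod}~2^{l+4})$ lands in the second-ball class), but nothing depends on this because Proposition \ref{prop:Z_2 m=-1(4)} already treats the cycles at $-1$ directly.
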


\begin{proof}
1. The lifts of $\{1+2^{l+2}\}$ and $\{-1+2^{l+2}\}$ at level $t+l+2$ are of the forms $\{1+2^{l+2}+2^{l+3}a\}$ and $\{-1+2^{l+2}+2^{l+3}a\}$, respectively, where $a\in\{0,\dots,2^{t-1}-1\}$. Hence from Propositions  \ref{prop:pZ_p attracting basin} and \ref{prop:Z_2 m=1(4)}, the statement is proved.

2. The lifts of $\{1+2^{l+2},1+2^{l+2}+2^{l+3}\}$ and $\{-1+2^{l+2},-1+2^{l+2}+2^{l+3}\}$ at level $t+l+3$ are of the forms $\{1+2^{l+2}+2^{l+4}a,(1+2^{l+2}+2^{l+4}a)^{-1}+2^{t+l+2}\}$ and $\{-1+2^{l+2}+2^{l+4}a,(-1+2^{l+2}+2^{l+4}a)^{-1}+2^{t+l+2}\}$, respectively, where $a\in\{0,\dots,2^{t-1}-1\}$. Hence from Propositions  \ref{prop:pZ_p attracting basin} and \ref{prop:Z_2 m=-1(4)}, the statement is proved.
\end{proof}

\section{Minimal decompositions of monomials on $\Z_3$}

In this section, we study the dynamical structure of monomials on the ring $\Z_3$. Let $f(x)=x^m$ with an integer $m\geq 2$. We categories the monomials depending on the exponentials $m$ which we divide into three cases: $m\equiv 0,1$ and $-1~(\mathrm{mod}~3)$.

Firstly, consider the case $m\equiv 0~(\mathrm{mod}~3)$.

\begin{theorem}
Let $f(x)=x^m$ over $\Z_3$ and assume $m\geq 2$ with $m\equiv 0~(\mathrm{mod}~3)$.
\begin{enumerate}
\item If $m\equiv 0~(\mathrm{mod}~6)$, then the minimal decomposition of $\Z_3$ for $f(x)$ is
\[ \Z_3=\{0,1\}\bigsqcup\big((3\Z_3-\{0\})\cup(1+3\Z_3-\{1\})\cup(-1+3\Z_3)\big). \]
Here, $\{0,1\}$ is the set of fixed points and the other sets are the attracting basin of $\{0,1\}$.

\item If $m\equiv 3~(\mathrm{mod}~6)$, then the minimal decomposition of $\Z_3$ for $f(x)$ is
\[ \Z_3=\{0,\pm 1\}\bigsqcup\big((3\Z_3-\{0\})\cup(1+3\Z_3-\{1\})\cup(-1+3\Z_3-\{-1\})\big). \]
Here, $\{0,\pm 1\}$ is the set of fixed points and the other sets are the attracting basin of $\{0,\pm 1\}$.
\end{enumerate}
\end{theorem}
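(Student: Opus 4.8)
The plan is to split the verification into the three residue balls $3\Z_3$, $1+3\Z_3$ and $-1+3\Z_3$ that partition $\Z_3$, and to identify the fate of each ball under $f(x)=x^m$. The ball $3\Z_3$ is already handled by Proposition \ref{prop:pZ_p attracting basin}: since $f(0)=0$ and $a_1(0)=f'(0)=0$, the cycle $\{0\}$ grows tails and $3\Z_3$ lies in the attracting basin of the fixed point $0$. This part is common to both cases and requires no new work.

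Next I would treat the ball $1+3\Z_3$, which behaves identically in both cases because the argument uses only $m\equiv 0\pmod 3$. Since $f(1)=1$, the point $1$ is a fixed point at every level, and I would compute $a_1(1)=f'(1)=m\cdot 1^{m-1}=m\equiv 0\pmod 3$, so $\{1\}$ grows tails at level $1$ by Definition \ref{def:movement for p>=3}. The key point is that this tail-growing recurs at every level: because $1$ is a genuine fixed point of $f$ on $\Z_3$, the cycle surviving in the lifts of $\{1\}$ (statement 3 of Proposition \ref{prop:behavior of lifts for p>=3}) is again $\{1\}$, whose multiplier $a_l(1)=m$ stays $\equiv 0\pmod 3$. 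Consequently every point of $1+3\Z_3$ is drawn into the fixed point $1$, so $(1+3\Z_3)-\{1\}$ lies in the attracting basin of $1$. To make the convergence fully rigorous I would supplement the lifting lemma with the direct estimate $\nu_3\big(f(x)-1\big)\ge \nu_3(x-1)+1$ for $x\in 1+3\Z_3$, obtained by expanding $(1+3u)^m-1$ with the binomial theorem and using $\nu_3(m)\ge 1$.

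It remains to dispose of the ball $-1+3\Z_3$, and this is the only place where the two cases diverge, so it is the step I expect to require the most care. When $m\equiv 0\pmod 6$, $m$ is even, so $f(-1)=(-1)^m=1$ and, more generally, $f(-1+3\Z_3)\subseteq 1+3\Z_3$ because $x\equiv -1\pmod 3$ forces $x^m\equiv 1\pmod 3$; thus the whole ball $-1+3\Z_3$ is mapped into $1+3\Z_3$ and then converges to $1$, placing it in the attracting basin of $1$ and leaving the fixed-point set equal to $\{0,1\}$. When $m\equiv 3\pmod 6$, $m$ is odd, so $f(-1)=(-1)^m=-1$ and $-1$ is itself a fixed point; here I would repeat the $1+3\Z_3$ analysis verbatim, noting $a_1(-1)=f'(-1)=m(-1)^{m-1}=m\equiv 0\pmod 3$ (using that $m-1$ is even), so $\{-1\}$ grows tails at every level and $(-1+3\Z_3)-\{-1\}$ lies in the attracting basin of $-1$, giving fixed-point set $\{0,\pm 1\}$.

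Finally I would assemble the pieces: the three balls exhaust $\Z_3$, and every point has been shown to be either one of the listed fixed points or to lie in the attracting basin of one of them, so no minimal components appear and $\mathcal{M}=\emptyset$. The decompositions stated in parts (1) and (2) then follow at once. The main obstacle, as noted, is the ball $-1+3\Z_3$: one must track the parity of $m$ correctly to decide whether $-1$ is fixed or is merged into the basin of $1$, and in the fixed case verify that the tail-growing mechanism genuinely persists through all levels rather than only at level $1$.
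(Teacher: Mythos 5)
Your proposal is correct and takes essentially the same route as the paper's proof: partition $\Z_3$ into the three residue balls, apply Proposition~\ref{prop:pZ_p attracting basin} to $3\Z_3$, compute $a_1(\pm 1)=m\equiv 0\pmod 3$ so that $\{1\}$ (and, when $m$ is odd, $\{-1\}$) grows tails and invoke statement 3 of Proposition~\ref{prop:behavior of lifts for p>=3}, and observe that for even $m$ the ball $-1+3\Z_3$ is mapped into $1+3\Z_3$. Your supplementary estimate $\nu_3\big(f(x)-1\big)\ge \nu_3(x-1)+1$ is a sound (and slightly more careful) way of making the level-by-level persistence of the tail-growing explicit, but it does not change the argument's structure.
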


\begin{proof}
1. Assume that $m\equiv 0~(\mathrm{mod}~6)$. Then, $f(1)=1$ and $a_1(1)=f'(1)=m\equiv 0~(\mathrm{mod}~3)$. So, $\{1\}$ is a cycle of length 1 at level 1 which grows tails by Definition \ref{def:movement for p>=3}. By statement 3 of Proposition \ref{prop:behavior of lifts for p>=3}, $f(x)$ has a fixed point 1 in the clopen set $1+3\Z_3$ with $1+3\Z_3$ lying in its attracting basin. We check that $f(-1)\equiv 1~(\mathrm{mod}~3)$ implies $f(-1+3\Z_3)\subseteq 1+3\Z_3$. Therefore, with Proposition \ref{prop:pZ_p attracting basin}, the proof of statement 1 is complete.

2. Assume that $m\equiv 3~(\mathrm{mod}~6)$. Then, $f(\pm 1)=\pm 1$, $a_1(\pm 1)=f'(\pm 1)=m(\pm 1)^{m-1}\equiv 0~(\mathrm{mod}~3)$. So, $\{1\}$ and $\{-1\}$ are cycles of length 1 at level 1 which grow tails by Definition \ref{def:movement for p>=3}. By statement 3 of Proposition \ref{prop:behavior of lifts for p>=3}, $f(x)$ has fixed points $\pm 1$ in the clopen set $1+3\Z_3$ and $-1+3\Z_3$ with $1+3\Z_3$ and $-1+3\Z_3$ lying their attracting basins, respectively. Therefore, with Proposition \ref{prop:pZ_p attracting basin}, the proof of statement 2 is completed.
\end{proof}

Secondly, consider the case $m\equiv 1~(\mathrm{mod}~3)$.

\begin{proposition}\label{prop: p=3, m=1 (mod 3)}
Let $f(x)=x^m$ over $\Z_3$ and assume $m\geq 2$ with $m\equiv 1~(\mathrm{mod}~3)$. Let $t=\nu_3(m-1)$.
\begin{enumerate}
\item If $m\equiv 1~(\mathrm{mod}~6)$, then $f(x)$ has $4\cdot 3^{t-1}$ growing cycles of length 1 at every level $\geq t+1$.

\item If $m\equiv 4~(\mathrm{mod}~6)$, then $f(x)$ has $2\cdot 3^{t-1}$ growing cycles of length 1 at every level $\geq t+1$.
\end{enumerate}
\end{proposition}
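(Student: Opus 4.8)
The plan is to reduce the entire statement to a single valuation computation. First I would observe that, apart from the fixed point $0$ (which grows tails by Proposition \ref{prop:pZ_p attracting basin} and is never growing), every $1$-cycle at a level $l\geq 1$ consists of a unit: if $x\in 3\Z_3$ with $x\neq 0$, then $x^{m-1}-1$ is a unit, so $\nu_3(x^m-x)=\nu_3(x)$, and $x^m\equiv x\pmod{3^l}$ forces $x\equiv 0\pmod{3^l}$. Hence it suffices to count the unit fixed points of $f$ modulo $3^l$, that is, the units $x$ with $x^{m-1}\equiv 1\pmod{3^l}$, and to decide for each whether it grows or splits.

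For such a unit $1$-cycle $\{x\}$ at level $l$ we have $k=1$, so $a_l(x)=f'(x)=mx^{m-1}\equiv m\equiv 1\pmod 3$, using $x^{m-1}\equiv 1\pmod{3^l}$ and $m\equiv 1\pmod 3$. Thus the growing-tails and partially-splitting cases of Definition \ref{def:movement for p>=3} never occur for units, and whether $\{x\}$ grows or splits is governed entirely by $b_l(x)=\frac{x^m-x}{3^l}=x\cdot\frac{x^{m-1}-1}{3^l}$. Since $x$ is a unit, $\nu_3(b_l(x))=\nu_3(x^{m-1}-1)-l$, so $\{x\}$ grows at level $l$ precisely when $\nu_3(x^{m-1}-1)=l$ and splits when $\nu_3(x^{m-1}-1)>l$.

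The heart of the argument is therefore the valuation $\nu_3(x^{m-1}-1)$. Writing a unit as $x=\varepsilon u$ with $\varepsilon\in\{1,-1\}$ and $u\in 1+3\Z_3$, I would establish the lifting-the-exponent identity $\nu_3(u^n-1)=\nu_3(u-1)+\nu_3(n)$ for $u\in 1+3\Z_3$ and $n\geq 1$, which is provable by the binomial expansions already used throughout the paper. When $m\equiv 1\pmod 6$ the exponent $m-1$ is even, so $\varepsilon^{m-1}=1$ for both signs and $\nu_3(x^{m-1}-1)=\nu_3(x-\varepsilon)+t$, where $\varepsilon$ is the residue of $x$ modulo $3$. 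When $m\equiv 4\pmod 6$ the exponent $m-1$ is odd, so for $x\in -1+3\Z_3$ we get $x^{m-1}\equiv -1\pmod 3$ and such $x$ is never a $1$-cycle, while for $x\in 1+3\Z_3$ the identity gives $\nu_3(x^{m-1}-1)=\nu_3(x-1)+t$.

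Finally I would count. By the above, a growing $1$-cycle at level $l$ is a residue $x\bmod 3^l$ with $\nu_3(x-\varepsilon)=l-t$ for an admissible sign $\varepsilon$; set $s=l-t$, which satisfies $s\geq 1$ exactly when $l\geq t+1$. For a fixed $\varepsilon$ the number of residues modulo $3^l$ with $\nu_3(x-\varepsilon)=s$ is $3^{l-s}-3^{l-s-1}=2\cdot 3^{l-s-1}=2\cdot 3^{t-1}$. In the case $m\equiv 1\pmod 6$ both signs $\varepsilon=\pm 1$ are admissible and, because $s\geq 1$, yield residues in distinct classes modulo $3$, hence disjoint, giving $4\cdot 3^{t-1}$ growing $1$-cycles; in the case $m\equiv 4\pmod 6$ only $\varepsilon=1$ is admissible, giving $2\cdot 3^{t-1}$. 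The main obstacle is the valuation identity together with the bookkeeping of the sign $\varepsilon$ across the two parities of $m-1$; once that is in place the counting is a routine geometric-series computation, and the hypothesis $l\geq t+1$ is precisely what guarantees $s\geq 1$ so that the two signs remain in distinct residue classes.
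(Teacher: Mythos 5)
Your argument is correct, and it reaches the count by a genuinely different route than the paper. The paper works inside the Desjardins--Zieve lifting machinery: it starts from the fixed points $\pm 1$, checks they split at every level, then for the lifted fixed points $1+3^{l+1}c$ computes $A_{l+2}=t$ and $B_{l+2}=t-1$ by binomial expansions, and feeds these into Proposition \ref{prop: lifts of splitting cycle for p>=3} (with a somewhat awkward case split $0\leq l\leq t-3$ versus $l>t-3$) to conclude that all lifts at level $t+l+1$ grow forever; the totals $2\cdot 3^{t-1}$ per admissible fixed point are then read off from the number of lifts accumulated through $t-1$ rounds of splitting. You instead classify \emph{all} $1$-cycles at a fixed level in one stroke: nonzero non-units are never periodic, every unit $1$-cycle automatically has $a_l\equiv 1\pmod 3$, and grow-versus-split is decided by the exact valuation $\nu_3(x^{m-1}-1)$, which the lifting-the-exponent identity evaluates in closed form as $\nu_3(x-\varepsilon)+t$; the count is then a residue count. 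Your route buys a per-level exact description of the growing cycles (and hence, unlike the paper's bottom-up construction, an explicit verification that there are no growing $1$-cycles other than the ones produced --- the paper gets this implicitly because the lift trees of $\pm 1$ exhaust the units), it needs no induction over levels or $A_l/B_l$ bookkeeping, and the threshold $t+1$ falls out transparently from $\nu_3(x-\varepsilon)\geq 1$. The paper's route buys uniformity: the same $a_l,b_l,A_l,B_l$ template is reused for the $2$-cycle cases $m\equiv -1\pmod 3$ and for $p=5$, where your sign bookkeeping would have to be redone through $f^2$, and it produces the explicit representatives $1+3^{l+1}i+3^{l+2}a$ that the subsequent theorem quotes verbatim when naming the minimal components. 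Two small points you should make explicit to be fully rigorous: first, the grow/split classification of a cycle is a statement about the ball $x+3^l\Z_3$, so you should note that your criterion is independent of the chosen representative --- immediate here, since $x'\equiv x\pmod{3^l}$ gives $\nu_3\big(x'^{m-1}-x^{m-1}\big)\geq l+t>l$, so the dichotomy $\nu_3(x^{m-1}-1)=l$ versus $\geq l+1$ is well defined on residues; second, the identity $\nu_3(u^n-1)=\nu_3(u-1)+\nu_3(n)$ for $u\in 1+3\Z_3$ should be isolated and proved as a lemma (it is the standard odd-prime LTE statement and follows by the binomial estimates already used in the paper), since the whole proof rests on it.
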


\begin{proof}
1. For $m\equiv 1~(\mathrm{mod}~6)$, we have $t \geq 1$. Write $m=3^t k+1$ with $3\not|~k$ and $k$ even. We compute that $f(\pm 1)=\pm 1$. So, $\pm 1$ are fixed points at any level. Let $l$ be any nonnegative integer and view $\{1\}$ and $\{-1\}$ as cycles of length 1 at level $l+1$.

We compute the quantity $a_{l+1}$, as defined in (\ref{a_l}),
\begin{align*}
a_{l+1}(\pm 1)=f'(\pm 1)=(3^t k+1) (\pm 1)^{3^t k} \equiv 1~(\mathrm{mod}~3),
\end{align*}
and the quantity $b_{l+1}$, as defined in (\ref{b_l}),
\begin{align*}
b_{l+1}(\pm 1)=\frac{f(\pm 1)-(\pm 1)}{3^{l+1}}=\frac{(\pm 1)^{3^t k+1}-(\pm 1)}{3^{l+1}}\equiv 0~(\mathrm{mod}~3).
\end{align*}
By Definition \ref{def:movement for p>=3}, the cycles $\{1\}$ and $\{-1\}$ split at level $l+1$ .

The lift of $\{1\}$ from level $l+1$ to level $l+2$ is $\{1, 1+3^{l+1}, 1+2\cdot 3^{l+1}\}$, where each point is fixed by $f$ at level $l+2$. We compute the quantity $A_{l+2}$ for $1+3^{l+1}$ and $1+2\cdot 3^{l+1}$, as defined in (\ref{def: A_l and B_l}). For these points, write $1+3^{l+1} c$ in $\Z_3$ with $c=i+3g$ where $i=1$ or $2$ and for some $g \in \Z_3$. We compute that
\begin{align*}
a_{l+2}(1+3^{l+1}c)&=f'(1+3^{l+1}c)\\
&=(3^t k+1)(1+3^{l+1}c)^{3^t k}\\
&=\sum_{j=0}^{3^t k}\tbinom{3^t k}{j}(3^{l+1}c)^j+\sum_{j=0}^{3^t k}\tbinom{3^t k}{j}3^{j(l+1)+t}c^j k.
\end{align*}
For $j\geq 1$, the summand in the first term has the valuation
\begin{align*}
\nu_3\big(\tbinom{3^t k}{j}(3^{l+1}c)^j\big)&\geq t+\Big\lfloor\frac{j}{3}\Big\rfloor-1+j(l+1)-(j-2)\\
&\geq t+l+1.
\end{align*}
For $j\geq 1$, the summand in the second term has the valuation
\begin{align*}
\nu_3\big(\tbinom{3^t k}{j}3^{j(l+1)+t}c^j k\big)&\geq t+\Big\lfloor\frac{j}{3}\Big\rfloor-1+j(l+1)+t-(j-2)\\
&\geq 2t+l+1.
\end{align*}
So, we obtain that
\begin{align*}
a_{l+2}(1+3^{l+1}c)&\equiv \tbinom{3^t k}{0}+\tbinom{3^t k}{0}3^{t}k\equiv 1+3^t k~(\mathrm{mod}~3^{t+l+1}).
\end{align*}
Hence,
\begin{align*}
A_{l+2}=\nu_3\big(a_{l+2}(1+3^{l+1}c)-1\big)=\nu_3(3^t k)=t.
\end{align*}

Now we compute the quantity $B_{l+2}$ for $1+3^{l+1}$ and $1+2\cdot 3^{l+1}$, as defined in (\ref{def: A_l and B_l}). For these points, as above write $1+3^{l+1} c$ in $\Z_3$ with $c=i+3g$ where $i=1$ or $2$ and for some $g \in \Z_3$. We compute that
\begin{align*}
f(1+3^{l+1}c)-(1+3^{l+1}c)&=(1+3^{l+1}c)^{3^t k+1}-(1+3^{l+1}c)\\
&=\sum_{j=0}^{3^t k+1}\tbinom{3^t k+1}{j}(3^{l+1}c)^j-(1+3^{l+1}c).
\end{align*}
For $j=2$, the summand has the valuation
\begin{align*}
\nu_3\big(\tbinom{3^t k+1}{2}(3^{l+1}c)^2\big)&=\nu_3\big(\frac{1}{2}(3^t k+1)(3^t k)(3^{2(l+1)})c^2\big)=t+2l+2.
\end{align*}
For $j\geq 3$, the summand has the valuation
\begin{align*}
\nu_3\big(\tbinom{3^t k+1}{j}(3^{l+1}c)^j\big)&\geq t+\Big\lfloor\frac{j}{3}\Big\rfloor-1+j(l+1)-(j-2)\\
&\geq t+3l+2.
\end{align*}
So, we obtain that
\begin{align*}
f(1+3^{l+1}c)-(1+3^{l+1}c)&\equiv \tbinom{3^t k+1}{0}+\tbinom{3^t k+1}{1}(3^{l+1}c)-(1+3^{l+1}c)\\
&\equiv 3^{t+l+1}kc~(\mathrm{mod}~3^{t+l+2}).
\end{align*}
Hence,
\begin{align*}
B_{l+2}=\nu_3(b_{l+2})=\nu_3
\Big(\frac{f(1+3^{l+1}c)-(1+3^{l+1}c)}{3^{l+2}}\Big)=t-1.
\end{align*}

If $0\leq l\leq t-3$, we have $A_{l+2}=t$ and $B_{l+2}=t-1$. By Definition (\ref{def: A_l and B_l}) of $B_{l+2}$, $B_{l+2}=t-1$ implies $B_{l+3}=t-2$, $B_{l+4}=t-3$, $\dots$, $B_{l+s+2}=t-s-1$ and $B_{l+s+3}=t-s-2$ for some $\frac{t-l-5}{2}<s\leq \frac{t-l-3}{2}$. Then, $B_{l+s+3}=t-s-2<t=A_{l+s+3}$ and $B_{l+s+3}=t-s-2<l+s+3$. By Proposition \ref{prop: lifts of splitting cycle for p>=3}, all the lifts of $\{1+3^{l+1}i\}$ at level $t+l+1$ grow forever. So, we obtain $2\cdot 3^{t-1}$ growing cycles of length 1 at level $t+l+1$.

If $l>t-3$, since $B_{l+2}=t-1<t=A_{l+2}$ and $B_{l+2}=t-1<l+2$, by Proposition \ref{prop: lifts of splitting cycle for p>=3}, all the lifts of $\{1+3^{l+1} i\}$ at level $t+l+1$ grow forever. So, we obtain $2\cdot 3^{t-1}$ growing cycles of length 1 at level $t+l+1$.

The same conclusion can be drawn for the lifts of $\{-1\}$ by the same argument of the proof for lifts of $\{1\}$. Combine these two, we obtain $4\cdot 3^{t-1}$ growing cycles of length 1 at level $t+l+1$.

2. For $m\equiv 4~(\mathrm{mod}~6)$, we compute that $f(1)=1$ and $f(-1)=1\not\equiv -1~(\mathrm{mod}~3)$. So, only 1 is the fixed point at any level, which is different from statement 1. With the same argument, we obtain that the number of growing cycles at level $\geq t+1$ is $2\cdot 3^{t-1}$, which is a half of that in statement 1. This completes the proof.
\end{proof}

By Propositions \ref{prop:pZ_p attracting basin} and \ref{prop: p=3, m=1 (mod 3)}, we conclude that the following is true.
\begin{theorem}
Let $f(x)=x^m$ over $\Z_3$ and assume $m\geq 2$ with $m\equiv 1~(\mathrm{mod}~3)$. Let $t=\nu_3(m-1)$.
\begin{enumerate}
\item If $m\equiv 1~(\mathrm{mod}~6)$, then the minimal decomposition of $\Z_3$ for $f(x)$ is
\[ \Z_3=\{0,\pm 1\}\bigsqcup \big(\bigcup_{l\geq 0}\bigcup_{a=0}^{3^{t-1}-1}\bigcup_{i=1}^2\bigcup_{j=1}^2 M_{l,a,i,j}\big) \bigsqcup (3\Z_3-\{0\}), \]
where
\begin{align*}
M_{l,a,i,1}&=1+3^{l+1}i+3^{l+2}a+3^{t+l+1}\Z_3~\mathrm{and}\\
M_{l,a,i,2}&=-1+3^{l+1}i+3^{l+2}a+3^{t+l+1}\Z_3.
\end{align*}
Here, $\{0,\pm 1\}$ is the set of fixed points, $M_{l,a,i,j}$'s are the minimal components, and $3\Z_3-\{0\}$ is the attracting basin of the fixed point 0.

\item If $m\equiv 4~(\mathrm{mod}~6)$, then the minimal decomposition of $\Z_3$ for $f(x)$ is
\[ \Z_3=\{0,1\}\bigsqcup \big(\bigcup_{l\geq 0}\bigcup_{a=0}^{3^{t-1}-1}\bigcup_{i=1}^2 M_{l,a,i}\big) \bigsqcup \big((3\Z_3-\{0\})\cup(2+3\Z_3)\big), \]
where $M_{l,a,i}=1+3^{l+1}i+3^{l+2}a+3^{t+l+1}\Z_3$. Here, $\{0,1\}$ is the set of fixed points, $M_{l,a,i}$'s are the minimal components, and $(3\Z_3-\{0\})\cup(2+3\Z_3)$ is the attracting basin.
\end{enumerate}
\end{theorem}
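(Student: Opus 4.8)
The plan is to assemble the decomposition directly from the two facts already in hand: Proposition~\ref{prop:pZ_p attracting basin}, which supplies the fixed point $0$ together with all of $3\Z_3$ in its attracting basin, and Proposition~\ref{prop: p=3, m=1 (mod 3)}, which produces the stated number of eventually-growing $1$-cycles on the unit part. The one conceptual step is to promote ``grows at every level $\geq t+1$'' to ``is a minimal component.'' If a $1$-cycle grows at level $n$, then by statement~1 of Proposition~\ref{prop:behavior of lifts for p>=3} it lifts to a single cycle of length $3$ at level $n+1$, and by induction to a single cycle of length $3^{N-n}$ at every level $N\geq n$; since $f$ then acts as one cyclic permutation on the $3^{N-n}$ residues inside the corresponding ball for all $N$, its restriction to that ball is minimal. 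Thus each forever-growing $1$-cycle contributes exactly one minimal component, namely a ball of radius $3^{-(t+l+1)}$.

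I would next read off these balls explicitly by following the lifting in the proof of Proposition~\ref{prop: p=3, m=1 (mod 3)}. For each $l\geq 0$ the cycles $\{1\}$ and $\{-1\}$ split at level $l+1$; the two non-fixed lifts $\pm 1+3^{l+1}i$, $i=1,2$, are splitting cycles at level $l+2$ with $A_{l+2}=t$ and $B_{l+2}=t-1$, so by Proposition~\ref{prop: lifts of splitting cycle for p>=3} their lifts keep splitting until, at level $t+l+1$, every resulting $1$-cycle grows forever. Refining the single residue $1+3^{l+1}i$ modulo $3^{l+2}$ up to modulus $3^{t+l+1}$ introduces the digits in positions $l+2,\dots,t+l$, that is $3^{t-1}$ residues $1+3^{l+1}i+3^{l+2}a$ with $a\in\{0,\dots,3^{t-1}-1\}$, whose balls are exactly the $M_{l,a,i,1}$; the computation around $-1$ gives the $M_{l,a,i,2}$. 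Counting $a$, $i$, and the two signs reproduces the $4\cdot 3^{t-1}$ (case~1) and, with only the $+1$ side surviving, the $2\cdot 3^{t-1}$ (case~2) of Proposition~\ref{prop: p=3, m=1 (mod 3)}.

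It then remains to check that the listed sets partition $\Z_3$. Exhaustion and disjointness of $(1+3\Z_3)\setminus\{1\}$ follow from the $3$-adic normal form: any such $x$ has $\nu_3(x-1)=l+1$ for a unique $l\geq 0$, and writing $x-1=3^{l+1}(i+3g)$ with $i\in\{1,2\}$ and $g\in\Z_3$, the residue of $g$ modulo $3^{t-1}$ determines a unique $a$, so $x$ lies in exactly one $M_{l,a,i,1}$; the same applies to $(-1+3\Z_3)\setminus\{-1\}$ and the $M_{l,a,i,2}$. Together with $\{0,\pm 1\}$ and $3\Z_3\setminus\{0\}$ this exhausts $\Z_3$ in case~1. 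In case~2, $m$ is even, so $f$ maps $2+3\Z_3=-1+3\Z_3$ into $1+3\Z_3$ (because $2^m\equiv 1\pmod 3$); since the only units fixed by $x\mapsto x^m$ are $\pm 1$ (the torsion of $\Z_3^\times$ being $\{\pm 1\}$), the point $-1$ maps to the fixed point $1$ while every other point of $2+3\Z_3$ lands in some minimal component $M_{l,a,i}$, so $2+3\Z_3$ belongs to the attracting part and the partition is complete.

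I expect the one real obstacle to be the bookkeeping of the second paragraph: reading off the exact radius $3^{-(t+l+1)}$ and the exact range $a\in\{0,\dots,3^{t-1}-1\}$ from the iterated splitting without an off-by-one in the number of splits, and confirming that the families around $+1$ and $-1$ never collide. All the valuation estimates that guarantee forever-growing behavior are already contained in the cited propositions, so once the ball addresses are pinned down the partition check is routine.
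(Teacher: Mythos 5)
Your proposal is correct and follows essentially the same route as the paper: both assemble the decomposition from Proposition~\ref{prop:pZ_p attracting basin} and Proposition~\ref{prop: p=3, m=1 (mod 3)}, reading off the lifts at level $t+l+1$ as $\pm 1+3^{l+1}i+3^{l+2}a$ with $a\in\{0,\dots,3^{t-1}-1\}$ and, in case 2, noting that $f(2+3\Z_3)\subseteq 1+3\Z_3$. Your additional justifications --- the induction via statement~1 of Proposition~\ref{prop:behavior of lifts for p>=3} showing that a forever-growing cycle yields a minimal ball, and the $3$-adic normal-form check that the listed sets partition $\Z_3$ --- are precisely the steps the paper leaves implicit, correctly filled in.
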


\begin{proof}
1. The lifts of $\{1+3^{l+1}i\}$ and $\{-1+3^{l+1}i\}$ at level $t+l+1$ are of the forms $\{1+3^{l+1}i+3^{l+2}a\}$ and $\{-1+3^{l+1}i+3^{l+2}a\}$, respectively, where $i\in\{1,2\}$ and $a\in\{0,\dots,3^{t-1}-1\}$. Hence from Propositions \ref{prop:pZ_p attracting basin} and \ref{prop: p=3, m=1 (mod 3)}, the statement is proved.

2. The lifts of $\{1+3^{l+1}i\}$ at level $t+l+1$ are of the forms $\{1+3^{l+1}i+3^{l+2}a\}$, respectively, where $i\in\{1,2\}$ and $a\in\{0,\dots,3^{t-1}-1\}$. We check that
$f(2)\equiv 1~(\mathrm{mod}~3)$ implies $f(2+3\Z_3)\subseteq 1+3\Z_3$.
Hence from Propositions \ref{prop:pZ_p attracting basin} and \ref{prop: p=3, m=1 (mod 3)}, the statement is proved.
\end{proof}

Now, we consider the case $m\equiv -1~(\mathrm{mod}~3)$.

\begin{proposition}\label{prop: p=3, m=-1 (mod 3)}
Let $f(x)=x^m$ over $\Z_3$ and assume $m\geq 2$ with $m\equiv -1~(\mathrm{mod}~3)$. Let $t=\nu_3(m+1)$.
\begin{enumerate}
\item If $m\equiv 2~(\mathrm{mod}~6)$, then $f(x)$ has $3^{t-1}$ growing cycles of length 2 at every level $\geq t+1$.

\item If $m\equiv 5~(\mathrm{mod}~6)$, then $f(x)$ has $2\cdot 3^{t-1}$ growing cycles of length 2 at every level $\geq t+1$.
\end{enumerate}
\end{proposition}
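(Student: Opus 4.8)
The plan is to reduce the study of the length-$2$ cycles of $f(x)=x^m$ to the length-$1$ cycles of its square $g:=f\circ f=x^{m^2}$, to which Proposition \ref{prop: p=3, m=1 (mod 3)} applies directly. Writing $m=3^tk-1$ with $3\nmid k$, one computes $m^2-1=3^tk(3^tk-2)$, so $m^2\equiv 1~(\mathrm{mod}~3)$ and $\nu_3(m^2-1)=t$, since $3^tk-2\equiv 1~(\mathrm{mod}~3)$ is a unit. Moreover $m^2\equiv 4~(\mathrm{mod}~6)$ when $m\equiv 2~(\mathrm{mod}~6)$, and $m^2\equiv 1~(\mathrm{mod}~6)$ when $m\equiv 5~(\mathrm{mod}~6)$. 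Thus $g$ is a monomial of the kind handled by Proposition \ref{prop: p=3, m=1 (mod 3)} with the same parameter $t=\nu_3(m^2-1)$, and that proposition gives $2\cdot 3^{t-1}$ growing $1$-cycles of $g$ at every level $\geq t+1$ in case (1), and $4\cdot 3^{t-1}$ such cycles in case (2).

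The bridge between the two pictures is that for a length-$2$ cycle $\{x,f(x)\}$ the invariants of (\ref{a_l}) and (\ref{b_l}) collapse to those of $x$ as a fixed point of $g$: indeed $a_l(x)=f'(x)f'(f(x))=(f^2)'(x)=g'(x)$ and $b_l(x)=\big(f^2(x)-x\big)/3^l=\big(g(x)-x\big)/3^l$. By Definition \ref{def:movement for p>=3}, therefore, $\{x,f(x)\}$ is a growing $2$-cycle of $f$ at level $l$ if and only if $x$ is a growing $1$-cycle of $g$ at level $l$; since the growth type is an invariant of the cycle and does not depend on the marked point, the same equivalence holds with $x$ replaced by $f(x)$.

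Fix a level $L\geq t+1$ and let $S_L$ be the set of growing $1$-cycles of $g$ at level $L$, so $|S_L|=2\cdot 3^{t-1}$ in case (1) and $|S_L|=4\cdot 3^{t-1}$ in case (2). Because $f$ commutes with $g=f^2$, it sends a fixed point of $g$ (mod $3^L$) to a fixed point of $g$ (mod $3^L$), and by the previous paragraph $x$ and $f(x)$ are simultaneously growing or not, so $f$ maps $S_L$ into itself. Since $f^2=g$ fixes every element of $S_L$ (mod $3^L$), the map $f$ is an involution of $S_L$; it has no fixed point there, because an $f$-fixed unit satisfies $x^{m-1}=1$ and the only torsion of $\Z_3^{*}$ is $\{\pm 1\}$ (recall $\nu_3(m-1)=0$), giving only $x=1$ in case (1) and $x=\pm 1$ in case (2), none of which lies in $S_L$. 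Hence $f$ partitions $S_L$ into $|S_L|/2$ orbits, each an $f$-$2$-cycle, and conversely every growing $2$-cycle of $f$ at level $L$ arises this way under the identification above. This produces exactly $3^{t-1}$ growing $2$-cycles in case (1) and $2\cdot 3^{t-1}$ in case (2), at every level $L\geq t+1$, as claimed.

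The only genuinely delicate points are bookkeeping: one must verify that $f$ preserves $S_L$ at the same level $L$ (not merely that it sends fixed points of $g$ to fixed points of $g$) and that the correspondence is onto, both of which follow from the invariance of the growth type under the identification above. A fully self-contained alternative, parallel to the proof of Proposition \ref{prop: p=3, m=1 (mod 3)}, is to exhibit for each $l\geq 0$ the $2$-cycle $\sigma_l=\{1+3^{l+1},\,1+2\cdot 3^{l+1}\}$ at level $l+2$ (a $2$-cycle precisely because $m\equiv -1~(\mathrm{mod}~3)$ forces $f(1+3^{l+1}c)\equiv 1-3^{l+1}c~(\mathrm{mod}~3^{l+2})$), together with its counterpart near $-1$ in case (2), and to run the binomial valuation estimates to obtain $A_{l+2}=t$ and $B_{l+2}=t-1$ before invoking Proposition \ref{prop: lifts of splitting cycle for p>=3} to drive the lifts to grow forever at level $t+l+1$. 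In that route the binomial valuation estimate is the real crux.
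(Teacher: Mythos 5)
Your proposal is correct in substance but takes a genuinely different route from the paper. The paper argues directly: it shows that $\{1\}$ and $\{-1\}$ partially split at every level $l+1$ (since $a_{l+1}\equiv 2~(\mathrm{mod}~3)$), extracts the explicit $2$-cycles $\{1+3^{l+1},1+2\cdot 3^{l+1}\}$ via statement 4 of Proposition \ref{prop:behavior of lifts for p>=3}, and then redoes the binomial valuation estimates to obtain $B_{l+2}=t-1$ together with the appropriate lower bounds on $A_{l+2}$ (note $A_{l+2}\geq l+2$, not $=t$, when $l+2<t$, so your closing sketch is slightly off on that point), before invoking Proposition \ref{prop: lifts of splitting cycle for p>=3}. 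You instead pass to $g=f^2=x^{m^2}$, observe $\nu_3(m^2-1)=t$ (correct, since $m-1\equiv 1~(\mathrm{mod}~3)$ is a unit) and that $m^2\equiv 4$ resp.\ $1~(\mathrm{mod}~6)$, quote Proposition \ref{prop: p=3, m=1 (mod 3)} for $g$, and pair the growing fixed residues of $g$ into $2$-cycles of $f$ via the fixed-point-free involution induced by $f$. The identification $a_l(x)=g'(x)$ and $b_l(x)=\big(g(x)-x\big)/3^l$ for a $2$-cycle $\{x,f(x)\}$ is exactly right, and the counts come out correctly in both cases. What your route buys: no new valuation computations (they are inherited from the $m\equiv 1~(\mathrm{mod}~3)$ case), a uniform treatment of both congruence classes, and a conceptual explanation of why the answers are exactly half the counts in Proposition \ref{prop: p=3, m=1 (mod 3)}. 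What it costs: you rely on Proposition \ref{prop: p=3, m=1 (mod 3)} giving the \emph{exact} number of growing $1$-cycles (needed for your converse direction, though this is consistent with how the paper itself uses that proposition), and on the fact that the type of a cycle is independent of the marked point, which the paper uses implicitly but never isolates; your verification (that $a_L$ is a cyclic product and $b_L(f(x))\equiv f'(x)\,b_L(x)$ with $f'(x)$ a unit mod $3$) should be written out rather than deferred.

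One small repair is needed: your no-fixed-point argument quotes the torsion of $\Z_3^{*}$, which only rules out \emph{exact} fixed points in $\Z_3$, whereas what you need is that the only residues mod $3^L$ fixed by $f$ are $1$ (case 1) resp.\ $\pm 1$ (case 2). The correct justification is that $(\Z/3^L\Z)^{*}$ is cyclic of order $2\cdot 3^{L-1}$, so $x^{m-1}\equiv 1~(\mathrm{mod}~3^L)$ has exactly $\gcd(m-1,\,2\cdot 3^{L-1})$ solutions, and since $\nu_3(m-1)=0$ this gcd equals $1$ or $2$ according to the parity of $m$; moreover any non-unit fixed residue is $\equiv 0~(\mathrm{mod}~3^L)$. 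These exceptional residues have vanishing $b$-invariant, hence split and do not lie in $S_L$, so your pairing count stands.
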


\begin{proof}
2. We prove statement 2 firstly. For $m\equiv 5~(\mathrm{mod}~6)$, we have $t\geq 1$. Write $m=3^t k-1$ then $k$ is even. We compute that $f(\pm 1)=\pm 1$. So, $\pm 1$ are fixed points at any level. Let $l$ be any nonnegative integer and view $\{1\}$ and $\{-1\}$ as cycles of length 1 at level $l+1$.

We compute the quantity $a_{l+1}$, as defined in (\ref{a_l}),
\begin{align*}
a_{l+1}(\pm 1)=f'(\pm 1)=(3^t k-1)(\pm 1)^{3^t k-2} \equiv 2~(\mathrm{mod}~3).
\end{align*}
By Definition \ref{def:movement for p>=3}, the cycles $\{1\}$ and $\{-1\}$ partially split at level $l+1$.

By statement 4 in Proposition \ref{prop:behavior of lifts for p>=3}, the lifts of $\{1\}$ and $\{-1\}$ from level $l+1$ to level $l+2$ contain 2-cycles $\{1+3^{l+1},1+3^{l+1}\cdot 2\}$ and $\{-1+3^{l+1},-1+3^{l+1}\cdot 2\}$.

We consider the cycle $\{1+3^{l+1},1+3^{l+1}\cdot 2\}$.  Compute the quantity $a_{l+2}$, as defined in (\ref{a_l}).
\begin{align*}
a_{l+2}(1+3^{l+1})&=f'(1+3^{l+1})\cdot f'(1+3^{l+1}\cdot 2)\\
&=(3^t k-1)(1+3^{l+1})^{3^t k-2}\cdot (3^t k-1)(1+3^{l+1}\cdot 2)^{3^t k-2}\\
&\equiv 1~(\mathrm{mod}~3).
\end{align*}
We compute the quantity $b_{l+2}$, as defined in (\ref{b_l}). Since $(1+3^{l+1})^{3^t k}\equiv 1+3^{t+l+1}k~(\mathrm{mod}~3^{t+l+2})$ and $(1+3^{l+1})^{-1}=1-3^{l+1}+(3^{l+1})^2-(3^{l+1})^3+\dots$,
\begin{align*}
f(1+3^{l+1})&=(1+3^{l+1})^{3^t k-1}\\
&\equiv (1+3^{t+l+1}k)(1+3^{l+1})^{-1}\\
&\equiv (1+3^{l+1})^{-1}+3^{t+l+1}k~(\mathrm{mod}~3^{t+l+2}),
\end{align*}
and then
\begin{align*}
f^2(1+3^{l+1})&-(1+3^{l+1})\\
&\equiv f\big((1+3^{l+1})^{-1}+3^{t+l+1}k\big)-(1+3^{l+1})\\
&\equiv \sum_{j=0}^{3^t k-1}\tbinom{3^t k-1}{j}\big((1+3^{l+1})^{-1}\big)^{3^t k-1-j}(3^{t+l+1}k)^j-(1+3^{l+1})\\
&\equiv \big((1+3^{l+1})^{-1}\big)^{3^t k-1}+\tbinom{3^t k-1}{1}\big((1+3^{l+1})^{-1}\big)^{3^t k-2}(3^{t+l+1}k)\\
&\quad-(1+3^{l+1})-2\cdot 3^{t+l+1} k~(\mathrm{mod}~3^{t+l+2}).
\end{align*}
Since $(1+3^{l+1})^{-3^t k}\equiv 1-3^{t+l+1}k~(\mathrm{mod}~3^{t+l+2})$, then $f^2(1+3^{l+1})-(1+3^{l+1})\equiv -3^{t+l+1}\cdot 2k~(\mathrm{mod}~3^{t+l+2})$. Hence,
\begin{equation*}
f^2(1+3^{l+1})-(1+3^{l+1})=-3^{t+l+1}\cdot 2k+3^{t+l+2}c
\end{equation*}
for some $c\in\Z$. Then,
\begin{equation*}
b_{l+2}(1+3^{l+1})=-2\cdot 3^{t-1} k+3^t c
\begin{cases}
\equiv 0~(\mathrm{mod}~3), &\text{if $t\geq 2$;}\\
\not\equiv 0~(\mathrm{mod}~3), &\text{if $t=1$.}
\end{cases}
\end{equation*}
Therefore the cycle $\{1+3^{l+1},1+3^{l+1}\cdot 2\}$ grows when $t=1$, and splits when $t\geq 2$ at level $l+2$.

Next we compute the quantity $A_{l+2}$, as defined in (\ref{def: A_l and B_l}). For any $g_1,g_2\in\Z_3$,
\begin{align*}
a_{l+2}&(1+3^{l+1}+3^{l+2}g_1)\\
&=f'(1+3^{l+1}+3^{l+2}g_1)\cdot f'(1+3^{l+1}\cdot 2+3^{l+2}g_2)\\
&=(3^t k-1)(1+3^{l+1}+3^{l+2}g_1)^{3^t k-2}\cdot (3^t k-1)(1+3^{l+1}\cdot 2+3^{l+2}g_2)^{3^t k-2}\\
&=(1-3^t\cdot 2k+3^{2t}k^2)(1+3^{l+2}g)^{3^t k-2}
\end{align*}
where $g=1+g_2+g_2+3^l(1+3g_1)(2+3g_2)$. Since $\big(1+3^{l+2}g\big)^{3^t k}\equiv 1~(\mathrm{mod}~3^{t+l+2})$,
\begin{align*}
a_{l+2}(1+3^{l+1}+3^{l+2}g_1)\equiv (1-3^t\cdot 2k+3^{2t}k^2)(1+3^{l+2}g)^{-2}~(\mathrm{mod}~3^{t+l+2}),
\end{align*}
and since $(1+3^{l+2}g)^{-1}=1-(3^{l+2}g)+(3^{l+2}g)^2-(3^{l+2}g)^3+\cdots$,
\begin{align*}
(1+3^{l+2}g)^{-2}&=1-2(3^{l+2}g)+3(3^{l+2}g)^2-4(3^{l+2}g)^3+5(3^{l+2}g)^4-\cdots.
\end{align*}
So, we obtain
\begin{align*}
a_{l+2}&(1+3^{l+1}+3^{l+2}g_1)\\
&\equiv (1+3^{l+2}g)^{-2}-3^t\cdot 2k+3^{2t}k^2\\
&\equiv 1-2(3^{l+2}g)+3(3^{l+2}g)^2-4(3^{l+2}g)^3+5(3^{l+2}g)^4-\dots\\
&\quad -3^t\cdot 2k+3^{2t}k^2~(\mathrm{mod}~3^{t+l+2}).
\end{align*}
Hence,
\begin{equation*}
A_{l+2}=\nu_3\big(a_{l+2}(1+3^{l+1}+3^{l+2}g_1)-1\big)
\begin{cases}
 \geq l+2 &\text{if}\ l+2<t,\\
 \geq t &\text{if}\ l+2=t,\\
 = t &\text{if}\ l+2>t.
\end{cases}
\end{equation*}

We compute the quantity $B_{l+2}$, as defined in (\ref{def: A_l and B_l}). Let $h=1+3g_1$. Then, $b_{l+2}(1+3^{l+1}+3^{l+2}g_1)=b_{l+2}(1+3^{l+1}h)$. Since $(1+3^{l+1}h)^{3^t k}\equiv 1+3^{t+l+1}kh~(\mathrm{mod}~3^{t+l+2})$ and $(1+3^{l+1}h)^{-1}=1-3^{l+1}h+(3^{l+1}h)^2-(3^{l+1}h)^3+\cdots$, we obtain that
\begin{align*}
f(1+3^{l+1}h)&=(1+3^{l+1}h)^{3^t k-1}\\
&\equiv (1+3^{l+1}h)^{-1}+3^{t+l+1}kh~(\mathrm{mod}~3^{t+l+2}).
\end{align*}
So we compute the following.
\begin{align*}
f^2(1+&3^{l+1}h)-(1+3^{l+1}h)\\
&\equiv f\big((1+3^{l+1}h)^{-1}+3^{t+l+1}kh\big)-(1+3^{l+1}h)\\
&\equiv \big((1+3^{l+1}h)^{-1}+3^{t+l+1}kh\big)^{3^t k-1}-(1+3^{l+1}h)\\
&\equiv \sum_{j=0}^{3^t k-1}\tbinom{3^t k-1}{j}\big((1+3^{l+1}h)^{-1}\big)^{3^t k-1-j}(3^{t+l+1}kh)^{j}-(1+3^{l+1}h)\\
&\equiv \tbinom{3^t k-1}{0}\big((1+3^{l+1}h)^{-1}\big)^{3^t k-1}+\tbinom{3^t k-1}{1}\big((1+3^{l+1}h)^{-1}\big)^{3^t k-2}(3^{t+l+1}kh)\\
&\quad -(1+3^{l+1}h)\\
&\equiv (1+3^{l+1}h)^{-3^t k+1}-(1+3^{l+1}h)^{-3^t k+2}(3^{t+l+1}kh)\\
&\quad -(1+3^{l+1}h)~(\mathrm{mod}~3^{t+l+2}).
\end{align*}
Since $(1+3^{l+1}h)^{-3^t k}\equiv 1-3^{t+l+1}kh~(\mathrm{mod}~3^{t+l+2})$,
\begin{align*}
f^2(1+3^{l+1}h)-(1+3^{l+1}h)&\equiv 1+3^{l+1}h-3^{t+l+1}\cdot 2kh-(1+3^{l+1}h)\\
&\equiv -3^{t+l+1}\cdot 2kh~(\mathrm{mod}~3^{t+l+2}).
\end{align*}
Therefore, we obtain
\begin{align*}
B_{l+2}=\nu_3(b_{l+2}(1+3^{l+1}h))=\nu_3\Big(\frac{f^2(1+3^{l+1}h)-(1+3^{l+1}h)}{3^{l+2}}\Big)=t-1.
\end{align*}

If $l+2<t$, then we have $A_{l+2}\geq l+2$ and $B_{l+2}=t-1$. By Definition (\ref{def: A_l and B_l}), we can check that $A_n\geq l+2$ for any level $n\geq l+2$. So, $B_t=B_{l+2+(t-l-2)}=t-1-(t-l-2)=l+1<l+2\leq A_t$ and $B_t=l+1<l+2<t$. Therefore, by Proposition \ref{prop: lifts of splitting cycle for p>=3}, all the lifts of $\{1+3^{l+1},1+3^{l+1}\cdot 2\}$ at level $t+l+1$ grow forever, hence we obtain that $3^{t-l-2}\cdot 3^{l+1}=3^{t-1}$ growing cycles of length 2 at level $t+l+1$.

If $l+2\geq t$, then we have $A_{l+2}\geq t$ and $B_{l+2}=t-1$. So, $B_{l+2}=t-1<t\leq A_{l+2}$ and $B_{l+2}=t-1<t\leq l+2$. Therefore, by Proposition \ref{prop: lifts of splitting cycle for p>=3}, all the lifts of $\{1+3^{l+1},1+3^{l+1}\cdot 2\}$ at level $t+l+1$ grow forever, hence we obtain that $3^{t-1}$ growing cycles of length 2 at level $t+l+1$.

For the cycle $\{-1+3^{l+1}, -1+3^{l+1}\cdot 2\}$, we can analysis analogously. So, we obtain that $3^{t-1}$ growing cycles of length 2 at level $t+l+1$. Combine these two cases, we conclude that there are $2\cdot 3^{t-1}$ growing cycles of length 2 at level $t+l+1$, as required.

1. For $m\equiv 2~(\mathrm{mod}~6)$, we compute that $f(1)=1$ and $f(-1)=1\not\equiv -1~(\mathrm{mod}~3)$. So, only 1 is a fixed point at any level, which is different from the case of statement 2. With the same argument, we obtain that the number of growing cycles at level $\geq t+1$, which is a half of that in statement 2. This completes the proof.
\end{proof}

By Propositions \ref{prop:pZ_p attracting basin} and \ref{prop: p=3, m=-1 (mod 3)}, we conclude that the following is true.
\begin{theorem}
Let $f(x)=x^m$ over $\Z_3$ and assume $m\geq 2$ with $m\equiv 2~(\mathrm{mod}~3)$. Let $t=\nu_3(m+1)$.
\begin{enumerate}
\item If $m\equiv 2~(\mathrm{mod}~6)$, then the minimal decomposition of $\Z_3$ for $f(x)$ is
\[ \Z_3=\{0,1\}\bigsqcup \big(\bigcup_{l\geq 0}\bigcup_{a=0}^{3^{t-1}-1}\bigcup_{i=1}^2 M_{l,a,i}\big) \bigsqcup \big((3\Z_3-\{0\})\cup(2+3\Z_3)\big), \]
where $M_{l,a}=(1+3^{l+1}+3^{l+2}a+3^{t+l+1}\Z_3)\cup \big((1+3^{l+1}+3^{l+2}a)^{-1}+3^{t+l+1}\Z_3\big)$. Here, $\{0,1\}$ is the set of fixed points, $M_{l,a}$'s are the minimal components, and $3\Z_3-\{0\}$ and $2+3\Z_3$ are the attracting basin of 0 and $1+3\Z_3$, respectively.

\item If $m\equiv 5~(\mathrm{mod}~6)$, then the minimal decomposition of $\Z_3$ for $f(x)$ is
\[ \Z_3=\{0,\pm 1\}\bigsqcup \big(\bigcup_{l\geq 0}\bigcup_{a=0}^{3^{t-1}-1}\bigcup_{i=1}^2 M_{l,a,i}\big) \bigsqcup (3\Z_3-\{0\}), \]
where
\begin{align*}
M_{l,a,1}&=(1+3^{l+1}+3^{l+2}a+3^{t+l+1}\Z_3)\\
&\quad \cup \big((1+3^{l+1}+3^{l+2}a)^{-1}+3^{t+l+1}\Z_3\big)~\mathrm{and}\\
M_{l,a,2}&=(-1+3^{l+1}+3^{l+2}a+3^{t+l+1}\Z_3)\\
&\quad \cup \big((-1+3^{l+1}+3^{l+2}a)^{-1}+3^{t+l+1}\Z_3\big).
\end{align*}
Here, $\{0,\pm 1\}$ is the set of fixed points, $M_{l,a,i}$'s are the minimal components, and $3\Z_3-\{0\}$ is the attracting basin of the fixed point 0.
\end{enumerate}
\end{theorem}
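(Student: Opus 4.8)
The plan is to build the decomposition from the two ingredients already in hand: Proposition \ref{prop:pZ_p attracting basin}, which furnishes the fixed point $0$ with basin $3\Z_3-\{0\}$, and Proposition \ref{prop: p=3, m=-1 (mod 3)}, which produces the growing $2$-cycles that become the minimal components. Because the heavy $3$-adic estimates are already carried out in the latter, the remaining work is organizational: (i) write the lifts of those $2$-cycles in the closed form appearing in the $M_{l,a,i}$, (ii) determine the fixed-point set in each parity case, and (iii) check that the three pieces are pairwise disjoint and exhaust $\Z_3$.

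For step (i), fix $l\geq 0$. Proposition \ref{prop: p=3, m=-1 (mod 3)} shows that the $2$-cycle $\{1+3^{l+1},1+2\cdot 3^{l+1}\}$ (and, when $m\equiv 5\pmod 6$, also the analogous $-1$-cycle) splits as the level rises until, at level $t+l+1$, there are $3^{t-1}$ lifts each of which grows forever; since each such lift grows at every subsequent level, remaining a single cycle whose length is multiplied by $3$ at each step (statement 1 of Proposition \ref{prop:behavior of lifts for p>=3}), the map $f$ is minimal on the associated clopen set, so each lift is one minimal component. The $3^{t-1}$ lifts of the first point at level $t+l+1$ are exactly $1+3^{l+1}+3^{l+2}a$ with $a\in\{0,\dots,3^{t-1}-1\}$, giving the first ball $1+3^{l+1}+3^{l+2}a+3^{t+l+1}\Z_3$. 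For its companion I would reuse the congruence $f(1+3^{l+1}h)\equiv(1+3^{l+1}h)^{-1}+3^{t+l+1}kh\pmod{3^{t+l+2}}$ already derived inside Proposition \ref{prop: p=3, m=-1 (mod 3)}: taking $h=1+3a$ and reducing modulo $3^{t+l+1}$ kills the final term, so $f(1+3^{l+1}+3^{l+2}a)\equiv(1+3^{l+1}+3^{l+2}a)^{-1}\pmod{3^{t+l+1}}$ and the second ball is $(1+3^{l+1}+3^{l+2}a)^{-1}+3^{t+l+1}\Z_3$. This is exactly $M_{l,a,1}$, and the $-1$-family yields $M_{l,a,2}$ verbatim.

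For steps (ii) and (iii), note that $f(1)=1$ always while $f(-1)=(-1)^m$, so $-1$ is fixed precisely when $m$ is odd; the fixed-point set is therefore $\{0,1\}$ when $m\equiv 2\pmod 6$ and $\{0,\pm 1\}$ when $m\equiv 5\pmod 6$. In the even case the coset $2+3\Z_3=-1+3\Z_3$ is not invariant: since $f(2)\equiv 1\pmod 3$ we have $f(2+3\Z_3)\subseteq 1+3\Z_3$, and as $1+3\Z_3$ is the disjoint union of $\{1\}$ with the components $M_{l,a}$, every point of $2+3\Z_3$ is attracted either to the fixed point $1$ or into some $M_{l,a}$, so $2+3\Z_3$ joins $3\Z_3-\{0\}$ in the attracting basin. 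For exhaustion, any $x\neq 1$ in $1+3\Z_3$ has $\nu_3(x-1)=l+1$ for a unique $l\geq 0$; within that shell the first balls fill the sub-shell $x\equiv 1+3^{l+1}\pmod{3^{l+2}}$ while the companion balls fill $x\equiv 1+2\cdot 3^{l+1}\pmod{3^{l+2}}$, and a measure count ($3^{t-1}$ balls of radius $3^{-(t+l+1)}$ against a sub-shell of measure $3^{-(l+2)}$) shows each sub-shell is tiled exactly. The same applies to $-1+3\Z_3$ in case 2, and together with $3\Z_3=\{0\}\sqcup(3\Z_3-\{0\})$ and $\Z_3=3\Z_3\sqcup(1+3\Z_3)\sqcup(2+3\Z_3)$ this yields the stated disjoint decomposition.

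I expect the main obstacle to be the exhaustion in step (iii) rather than the minimality or the closed forms, which follow quite directly from the cited propositions. The crux is to record that the two points of each $2$-cycle sit in distinct sub-shells of $1+3\Z_3$---so the first balls and companion balls are genuinely disjoint---and then to match the ball count $2\cdot 3^{t-1}$ against the shell measure $2\cdot 3^{-(l+2)}$, confirming that the components leave no gaps. Once this sub-shell/measure bookkeeping is secured, the fixed-point identification and the routing of $2+3\Z_3$ into the basin are routine.
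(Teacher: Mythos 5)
Your proposal is correct and follows essentially the same route as the paper: both rest on Proposition \ref{prop:pZ_p attracting basin} for the fixed point $0$ and Proposition \ref{prop: p=3, m=-1 (mod 3)} for the growing $2$-cycles, identify the lifts at level $t+l+1$ in the closed form $\{1+3^{l+1}+3^{l+2}a,\,(1+3^{l+1}+3^{l+2}a)^{-1}\}$ (your derivation of the companion ball from the congruence $f(1+3^{l+1}h)\equiv(1+3^{l+1}h)^{-1}+3^{t+l+1}kh \pmod{3^{t+l+2}}$ is exactly the computation inside that proposition), and route $2+3\Z_3$ into the basin via $f(2)\equiv 1 \pmod 3$ in the even case. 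The sub-shell disjointness and tiling bookkeeping you supply in step (iii) is sound and merely makes explicit the exhaustion that the paper's two-line proof leaves implicit in the lifting formalism.
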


\begin{proof}
1. The lifts of $\{1+3^{l+1},1+3^{l+1}\cdot 2\}$ at level $t+l+1$ are of the forms $\{1+3^{l+1}+3^{l+2}a,(1+3^{l+1}+3^{l+2}a)^{-1}\}$, where $a\in\{0,\dots,3^{t-1}-1\}$. We check that $f(2)\equiv 1~(\mathrm{mod}~3)$ implies $f(2+3\Z_3)\subseteq 1+3\Z_3$. Hence from Propositions \ref{prop:pZ_p attracting basin} and \ref{prop: p=3, m=-1 (mod 3)}, the statement is proved.

2. The lifts of $\{1+3^{l+1},1+3^{l+1}\cdot 2\}$ and $\{-1+3^{l+1},-1+3^{l+1}\cdot 2\}$ at level $t+l+1$ are of the forms $\{1+3^{l+1}+3^{l+2}a,(1+3^{l+1}+3^{l+2}a)^{-1}\}$ and $\{-1+3^{l+1}+3^{l+2}a,(-1+3^{l+1}+3^{l+2}a)^{-1}\}$, respectively, where $a\in\{0,\dots,3^{t-1}-1\}$. Hence from Propositions \ref{prop:pZ_p attracting basin} and \ref{prop: p=3, m=-1 (mod 3)}, the statement is proved.
\end{proof}


\section{Minimal decompositions of monomials on $\Z_5$}

In this section, we study the dynamical structure of monomials on the ring $\Z_5$. Let $f(x)=x^m$ with an integer $m\geq 2$. We categories the monomials depending on the exponentials $m$ which we divide into five cases: $m\equiv 0,1,-1,2,$ and $-2~(\mathrm{mod}~5)$.

The polynomial $x^5-x$ over $\Z_5$ is factored as $x(x-1)(x+1)(x-\bi)(x+\bi)$ where $\bi=2+5+2\cdot 5^2+\cdots \in \Z_5$ and $-\bi=3+3\cdot 5+2\cdot 5^2+\cdots \in \Z_5$. We say that $\bi\equiv 2$ and $-\bi\equiv 3~(\mathrm{mod}~5)$.

Firstly, consider the case $m\equiv 0 ~(\mathrm{mod}~5)$.

\begin{theorem}
Let $f(x)=x^m$ over $\Z_5$ and assume $m\geq 2$ with $m\equiv 0~(\mathrm{mod}~5)$.
\begin{enumerate}
\item If $m\equiv 0~(\mathrm{mod}~10)$, then the minimal decomposition of $\Z_5$ for $f(x)$ is
\begin{align*}
\Z_5&=\{0,1\}\bigsqcup\big((5\Z_5-\{0\})\cup(1+5\Z_5-\{1\})\cup(2+5\Z_5)\cup(3+5\Z_5)\\
&\quad \cup(4+5\Z_5)\big).
\end{align*}
Here, $\{0,1\}$ is the set of fixed points and the sets $5\Z_5-\{0\}$ and $(1+5\Z_5-\{1\})\cup(2+5\Z_5)\cup(3+5\Z_5)\cup(4+5\Z_5)$ are the attracting basin of the fixed points $0$ and $1$, respectively.

\item If $m\equiv 5~(\mathrm{mod}~20)$, then the minimal decomposition of $\Z_5$ for $f(x)$ is
 \begin{align*}
 \Z_5&=\{0,\pm 1,\pm\bi\}\bigsqcup\big((5\Z_5-\{0\})\cup(1+5\Z_5-\{1\})\\
 &\quad \cup(2+5\Z_5-\{\bi\})\cup(3+5\Z_5-\{-\bi\})\cup(4+5\Z_5-\{-1\})\big).
 \end{align*}
 Here, $\{0,\pm 1,\pm\bi\}$ is the set of fixed points and the sets
 $5\Z_5-\{0\}$, $1+5\Z_5-\{1\}$, $2+5\Z_5-\{\bi\}$, $3+5\Z_5-\{-\bi\}$, and $4+5\Z_5-\{-1\}$ are the attracting basin of the fixed points $0, 1, \bi$, $-\bi$ and $-1$, respectively.

\item If $m\equiv 15~(\mathrm{mod}~20)$, then the minimal decomposition of $\Z_5$ for $f(x)$ is
 \begin{align*}
 \Z_5&=\{0,\pm 1,\pm\bi\}\bigsqcup\big((5\Z_5-\{0\})\cup(1+5\Z_5-\{1\})\\
 &\quad \cup(2+5\Z_5-\{\bi\})\cup(3+5\Z_5-\{-\bi\})\cup(4+5\Z_5-\{-1\})\big).
 \end{align*}
 Here, $\{0,\pm 1,\pm\bi\}$ is the set of periodic points, where $0,\pm 1$ are fixed points and $\pm\bi$ are two periodic points to each other. The sets $5\Z_5-\{0\}$, $1+5\Z_5-\{1\}$, $4+5\Z_5-\{-1\}$ and $(2+5\Z_5-\{\bi\})\cup(3+5\Z_5-\{-\bi\})$ are the attracting basin of $0, 1, -1$ and $\{\pm\bi\}$, respectively.
\end{enumerate}
\end{theorem}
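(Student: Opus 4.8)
The plan is to handle all three cases through a single mechanism and then read off the periodic points case by case. The unifying observation is that $f'(x)=mx^{m-1}$ with $5\mid m$, so $\nu_5\big(f'(x)\big)\ge 1$ for every $x\in\Z_5$; consequently, for any cycle $\sigma$ of $f$ lying in $\Z_5^*$ the product $a_1(x)=\prod_j f'\big(f^j(x)\big)$ satisfies $a_1\equiv 0\pmod 5$, so $\sigma$ grows tails at level $1$ in the sense of Definition \ref{def:movement for p>=3}. Statement 3 of Proposition \ref{prop:behavior of lifts for p>=3} then describes the induced behaviour one level at a time; I will argue (see the last paragraph) that it promotes to the global statement that the clopen set $\mathbb{X}_\sigma$ lies in the attracting basin of a single genuine periodic orbit of length $|\sigma|$ inside $\mathbb{X}_\sigma$. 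Combined with Proposition \ref{prop:pZ_p attracting basin}, which handles the disk $5\Z_5$ around the fixed point $0$, this reduces everything to two tasks: locating the periodic residues modulo $5$, and, where a residue is transient, tracking which residue disk its forward orbit enters.

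First I would determine the periodic residues by analysing $x\mapsto x^m$ on $(\Z/5\Z)^*\cong\Z/4\Z$, whose behaviour depends only on $m\bmod 4$. For $m\equiv 0\pmod{10}$ the exponent is even, so the map is either constant equal to $1$ (when $m\equiv 0\pmod 4$) or the squaring map $1\mapsto1,\ 2,3\mapsto4,\ 4\mapsto1$ (when $m\equiv 2\pmod 4$); in both subcases $1$ is the only nonzero fixed residue and $2,3,4$ are transient. For $m\equiv 5\pmod{20}$ we have $m\equiv 1\pmod 4$, hence $x^m\equiv x$ and all four nonzero residues are fixed. For $m\equiv 15\pmod{20}$ we have $m\equiv 3\pmod 4$, so $f$ acts on $(\Z/5\Z)^*$ as inversion $x\mapsto x^{-1}$, fixing $1$ and $-1\equiv 4$ and interchanging $2$ and $3$.

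Then I would assemble each decomposition. For $m\equiv 0\pmod{10}$: the disks $5\Z_5$ and $1+5\Z_5$ are the basins of the fixed points $0$ and $1$ by the mechanism above, and the disks $2+5\Z_5,3+5\Z_5,4+5\Z_5$ each map into $1+5\Z_5$ in one or two steps (according to $m\bmod 4$), hence also lie in the basin of $1$. For $m\equiv 5\pmod{20}$: each of the five residue disks carries a fixed point, namely $0,1,\bi,-1,-\bi$ obtained from the factorization of $x^5-x$ recorded above, and each grows tails, so each disk is exactly the basin of its fixed point. For $m\equiv 15\pmod{20}$: the disks of $0,1,-1$ carry fixed points as before, while the pair $\{\bi,-\bi\}$ (with $\bi\equiv 2$, $-\bi\equiv 3$) is a genuine $2$-cycle since $\bi^m=\bi^{3}=-\bi$ and $(-\bi)^m=\bi$; here $a_1=m^2\big(\bi\cdot(-\bi)\big)^{m-1}=m^2(-\bi^2)^{m-1}=m^2\equiv 0\pmod 5$, so the $2$-cycle grows tails and, by Proposition \ref{prop:behavior of lifts for p>=3}, the union $(2+5\Z_5)\cup(3+5\Z_5)$ is its attracting basin.

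The hard part will be upgrading the single-level conclusion of Proposition \ref{prop:behavior of lifts for p>=3} to the global statement over all of $\Z_5$. I would justify this by showing that growing tails persists at every level, since each lift again has $\nu_5(a_l)\ge 1$ (every factor $f'$ still carries the factor $m$); equivalently, $|f'(x)|_5\le|m|_5\le 1/5$ on $\Z_5^*$ makes $f$ a genuine contraction on each nonzero residue disk, which yields a unique attracting periodic orbit per disk and simultaneously shows there are no hidden minimal components, so $\mathcal{M}=\emptyset$ in all three cases. A secondary, bookkeeping obstacle is the case $m\equiv 15\pmod{20}$: one must correctly identify $\{\bi,-\bi\}$ via $\bi^2=-1$ as a single $2$-periodic orbit rather than two fixed points, and compute $a_1$ over the full length-$2$ cycle rather than at a single point.
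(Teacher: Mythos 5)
Your proposal is correct and takes essentially the same route as the paper: the paper likewise notes that $5\mid m$ forces $a_1\equiv 0~(\mathrm{mod}~5)$ for every cycle in the units so that each cycle grows tails, identifies the periodic residues through the action of $m\bmod 4$ on $(\Z/5\Z)^*$ (phrased there as a $d$ even/odd split of $m=10d$, and via $\bi^2=-1$ for the $2$-cycle $\{\bi,-\bi\}$ when $m\equiv 15~(\mathrm{mod}~20)$), tracks the transient disks $2,3,4 \pmod 5$ into $1+5\Z_5$, and concludes by statement 3 of Proposition \ref{prop:behavior of lifts for p>=3} together with Proposition \ref{prop:pZ_p attracting basin}. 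Your ultrametric contraction argument ($|x^m-y^m|_5\le \tfrac15|x-y|_5$ on each unit residue disk, since $x\equiv y$ makes the $m$-term cofactor $\equiv mx^{m-1}\equiv 0~(\mathrm{mod}~5)$) is a welcome explicit justification of the globalization from the per-level growing-tails statement, which the paper simply takes from the cited proposition.
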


\begin{proof}
1. For $m\equiv 0~(\mathrm{mod}~10)$, write $m=10 d$ with $d\geq 1$. Since $f(1)=1$, $\{1\}$ is a cycle of length 1 at level 1.

We compute the quantity $a_1$ for the above cycles, as defined in (\ref{a_l}). Since $f'(x)=10d\cdot x^{10d-1}$, then $a(1)=f'(1)=10d\equiv 0~(\mathrm{mod}~5)$. Therefore, $\{1\}$ grows tails at level 1 by Definition \ref{def:movement for p>=3}. By statement 3 of Proposition \ref{prop:behavior of lifts for p>=3}, $f(x)$ has a fixed point $1\in 1+5\Z_5$, with $1+5\Z_5$ lying in its attracting basin.

If $d$ is even, then $f(2)\equiv 1$, $f(3)\equiv 1$ and $f(4)\equiv 1$ (mod 5), which imply that $f(2+5\Z_5)\cup f(3+5\Z_5)\cup f(4+5\Z_5)\subset 1+5\Z_5$. So, $(2+5\Z_5)\cup (3+5\Z_5)\cup (4+5\Z_5)$ lies in the attracting basin of 1.

If $d$ is odd, then $f(2)\equiv 4$, $f(3)\equiv 4$ and $f(4)\equiv 1$ (mod 5), which imply that $f(2+5\Z_5)\cup f(3+5\Z_5)\subset 4+5\Z_5$ and $f(4+5\Z_5)\subset 1+5\Z_5$. So, $(2+5\Z_5)\cup (3+5\Z_5)\cup (4+5\Z_5)$ lies in the attracting basin of 1.

Therefore, with Proposition \ref{prop:pZ_p attracting basin}, the proof of statement 1 is completed.

2. For $m\equiv 5~(\mathrm{mod}~20)$, write $m=5+20d$ with $d\geq 0$. Since $f(\pm 1)=\pm 1$ and $f(\pm\bi)=\pm\bi$, $\{1\}$, $\{-1\}$, $\{\bi\}$ and $\{-\bi\}$ are cycles of length 1 at level 1.

We compute the quantity $a_1$ for the above cycles, as defined in (\ref{a_l}). Since $f'(x)=(5+20d)x^{4+20d}$, then $a(\pm 1)=f'(\pm 1)=(5+20d)(\pm 1)^{4+20d}\equiv 0$ and $a(\pm\bi)=f'(\pm\bi)=(5+20d)(\pm\bi)^{4+20d}\equiv 0~(\mathrm{mod}~5)$. Therefore, $\{1\}$, $\{-1\}$, $\{\bi\}$ and $\{-\bi\}$ grow tails at level 1 by Definition \ref{def:movement for p>=3}. The monomial $f(x)$ has fixed points $1$ in $1+5\Z_5$, $\bi$ in $2+5\Z_5$, $-\bi$ in $3+5\Z_5$ and $-1$ in $4+5\Z_5$, hence by statement 3 of Proposition \ref{prop:behavior of lifts for p>=3} and Proposition \ref{prop:pZ_p attracting basin}, the proof of statement 2 is completed.

3. For $m\equiv 15~(\mathrm{mod}~20)$, write $m=15+20 d$ with $d\geq 0$. Since $f(\pm 1)=\pm 1$ and $f(\pm\bi)=\mp\bi$, $\{1\}$ and $\{-1\}$ are cycles of length 1 and $\{\pm\bi\}$ is a cycle of length 2 at level 1.

We compute the quantity $a_1$ for the above cycles, as defined in (\ref{a_l}). Since $f'(x)=(15+20d)x^{14+20d}$, then $a(\pm 1)=f'(\pm 1)=(15+20d)(\pm 1)^{14+20d}\equiv 0$ and $a(\bi)=f'(\bi)f'(-\bi)=(15+20d)\bi^{14+20d}\cdot (15+20d)(-\bi)^{14+20d}\equiv 0~(\mathrm{mod}~5)$. Therefore, $\{1\}$, $\{-1\}$ and $\{\pm \bi\}$ grow tails at level 1 by Definition \ref{def:movement for p>=3}. The monomial $f(x)$ has periodic points $1$ in $1+5\Z_5$, $-1$ in $4+5\Z_5$, $\pm\bi$ in $(2+5\Z_5)\cup (3+5\Z_5)$, hence by statement 3 of Proposition \ref{prop:behavior of lifts for p>=3} and Proposition \ref{prop:pZ_p attracting basin}, the proof of statement 3 is completed.
\end{proof}

Now we consider the case $m\equiv 1 ~(\mathrm{mod}~5)$.

\begin{proposition}\label{prop: p=5, m=1 (mod 5)}
Let $f(x)=x^m$ over $\Z_5$ and assume $m\geq 2$ with $m\equiv 1~(\mathrm{mod}~5)$. Let $t=\nu_5(m-1)$.
\begin{enumerate}
\item If $m\equiv 1~(\mathrm{mod}~20)$, then $f(x)$ has $16\cdot 5^{t-1}$ growing cycles of length 1 at every level $\geq t+1$.

\item If $m\equiv 6~(\mathrm{mod}~10)$, then $f(x)$ has $4\cdot 5^{t-1}$ growing cycles of length 1 at every level $\geq t+1$.

\item If $m\equiv 11~(\mathrm{mod}~20)$, then $f(x)$ has $8\cdot 5^{t-1}$ growing cycles of length 1 and $4\cdot 5^{t-1}$ growing cycles of length 2 at every level $\geq t+1$.
\end{enumerate}
\end{proposition}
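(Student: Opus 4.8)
The plan is to follow the template of Propositions~\ref{prop: p=3, m=1 (mod 3)} and \ref{prop: p=3, m=-1 (mod 3)}, with the multiplicative structure of $\Z_5^*$ in place of that of $\Z_3^*$. First I would record the periodic orbits at level $1$. Since any periodic point of $f(x)=x^m$ other than $0$ satisfies $x^{m^d-1}=1$ for some period $d$, every nonzero periodic point is a root of unity, and the only roots of unity in $\Z_5$ are the fourth roots $\{1,-1,\bi,-\bi\}$ (reducing to $1,4,2,3\pmod 5$), because $1+5\Z_5$ is torsion-free. Writing $m=5^tk+1$ with $5\nmid k$, the genuine fixed points of $f$ are $0$ together with those $\zeta\in\{1,-1,\bi,-\bi\}$ with $\zeta^{m-1}=1$, and their number is $\gcd(m-1,4)$. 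I would check that for $m\equiv 1\pmod{20}$ all four are fixed ($4\mid m-1$); for $m\equiv 6\pmod{10}$ only $1$ is fixed ($m-1$ is odd); and for $m\equiv 11\pmod{20}$ the points $\pm 1$ are fixed while $\bi\mapsto-\bi\mapsto\bi$ forms a $2$-cycle, since $\bi^{m-1}=\bi^2=-1$. This explains the three cases and why length-$2$ cycles occur only in case~3.

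Next, for each fixed point $\zeta$ I would reproduce the argument of Proposition~\ref{prop: p=3, m=1 (mod 3)}. Viewing $\{\zeta\}$ as a $1$-cycle at level $l+1$, formulas (\ref{a_l}) and (\ref{b_l}) give $a_{l+1}(\zeta)=f'(\zeta)=m\zeta^{m-1}=m\equiv 1\pmod 5$ and $b_{l+1}(\zeta)=0$, so $\{\zeta\}$ splits at every level $l+1$ by Definition~\ref{def:movement for p>=3}. Its lift to level $l+2$ consists of the five fixed points $\zeta+5^{l+1}i$ with $i\in\{0,1,2,3,4\}$; the lift $i=0$ is $\zeta$ itself, which keeps splitting and contributes only a periodic point. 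For the four nontrivial lifts I would expand $f'(\zeta+5^{l+1}c)$ and $f(\zeta+5^{l+1}c)-(\zeta+5^{l+1}c)$ by the binomial theorem, using $\zeta^{m-1}=1$, to obtain $A_{l+2}=\nu_5(a_{l+2}-1)=t$ and $B_{l+2}=\nu_5(b_{l+2})=t-1$, exactly as in the $p=3$ computation. Applying Proposition~\ref{prop: lifts of splitting cycle for p>=3} (part~1 directly when $l\ge t-2$, iterating part~3 otherwise) then shows that all lifts grow forever at level $t+l+1$, and since each splitting step multiplies the number of cycles by $5$, each of the four lifts yields $5^{(t+l+1)-(l+2)}=5^{t-1}$ growing $1$-cycles. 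Hence each fixed point contributes $4\cdot 5^{t-1}$ growing $1$-cycles, giving the totals $16\cdot5^{t-1}$, $4\cdot5^{t-1}$, and $8\cdot5^{t-1}$ in cases~1, 2, and 3.

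For the length-$2$ cycle $\{\bi,-\bi\}$ in case~3 I would mimic Proposition~\ref{prop: p=3, m=-1 (mod 3)}. Here $a_{l+1}=f'(\bi)f'(-\bi)=m^2\big(\bi\cdot(-\bi)\big)^{m-1}=m^2\equiv 1\pmod 5$ and $b_{l+1}=0$, so by Proposition~\ref{prop:behavior of lifts for p>=3} the $2$-cycle splits into five $2$-cycles at level $l+2$; the trivial lift continues the genuine orbit, while the other four are analyzed through $f^2(x)=x^{m^2}$, noting $\nu_5(m^2-1)=\nu_5(m-1)+\nu_5(m+1)=t$ since $m+1\equiv 2\pmod 5$. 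Using the inverse expansion $(1+5^{l+1}h)^{-1}=1-5^{l+1}h+(5^{l+1}h)^2-\cdots$ as in the $p=3$ proof, I would establish $A_{l+2}\ge\min\{l+2,t\}$ and $B_{l+2}=t-1$, and conclude via Proposition~\ref{prop: lifts of splitting cycle for p>=3} that the four nontrivial $2$-cycles grow forever at level $t+l+1$, each producing $5^{t-1}$ growing $2$-cycles, for a total of $4\cdot 5^{t-1}$.

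The main obstacle is the bookkeeping of $5$-adic valuations in these binomial expansions: after multiplying out $(\zeta+5^{l+1}c)^{m}$ and its derivative (and, for the $2$-cycle, $(\cdot)^{m^2}$), I must show that every term with summation index $j\ge 2$ (resp.\ $j\ge 3$) has valuation strictly larger than the leading contributions, so that only the $j=0,1$ (resp.\ $j=0,1,2$) terms survive modulo $5^{t+l+1}$ or $5^{t+l+2}$. This rests on the estimate $\nu_5\binom{5^tk}{j}\ge t-\nu_5(j)$ (and its variants), which replaces the $\lfloor j/3\rfloor$-type bounds of the $p=3$ proof. The only genuinely new feature compared with $p=3$ is that the group of roots of unity now has order $4$, which multiplies the number of fixed centers (producing the factors $16,4,8$) and forces the case split by $m\bmod 20$; once the valuation inequalities are in place, the application of Proposition~\ref{prop: lifts of splitting cycle for p>=3} (together with Proposition~\ref{prop:pZ_p attracting basin} for the basin of $0$) and the final count are routine.
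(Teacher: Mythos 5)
Your proposal is correct and takes essentially the same route as the paper's own proof: the same case split by $m\bmod 20$, the same verification that the cycles at level $l+1$ split ($a_{l+1}\equiv 1$, $b_{l+1}=0$), the same binomial-expansion computation giving $B_{l+2}=t-1$ and $A_{l+2}=t$ for the nontrivial lifts (your weaker bound $A_{l+2}\ge\min\{l+2,t\}$ for the $2$-cycle still suffices via the same iteration of Proposition \ref{prop: lifts of splitting cycle for p>=3}), and the same count of $5^{t-1}$ growing cycles per nontrivial lift yielding $16\cdot5^{t-1}$, $4\cdot5^{t-1}$, and $8\cdot5^{t-1}+4\cdot5^{t-1}$. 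The only differences are presentational: you justify the inventory of periodic points via the fourth roots of unity and torsion-freeness of $1+5\Z_5$, and you use the standard estimate $\nu_5\binom{5^tk}{j}\ge t-\nu_5(j)$ in place of the paper's ad hoc valuation bounds.
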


\begin{proof}
1. For $m\equiv 1~(\mathrm{mod}~20)$, we have $t\geq 1$. Write $m=5^t\cdot 4d+1$ for some $d$. We compute that $f(\pm 1)=\pm1$ and $f(\pm\bi)=(\pm\bi)^{5^t\cdot 4d+1}=\pm\bi$. So, $\pm 1$ and $\pm\bi$ are fixed points at any level. Let $l$ be any nonnegative integer and view $\{1\}$, $\{-1\}$, $\{\bi\}$ and $\{\bi\}$ as cycles of length 1 at level $l+1$.

We compute the quantity $a_{l+1}$, as defined in (\ref{a_l}),
\begin{align*}
&a_{l+1}(\pm 1)=f'(\pm 1)=(5^t\cdot 4d+1)(\pm 1)^{5^t\cdot 4d} \equiv 1~(\mathrm{mod}~5)~\mathrm{and}\\
&a_{l+1}(\pm\bi)=f'(\pm\bi)=(5^t\cdot 4d+1)(\pm\bi)^{5^t\cdot 4d} \equiv 1~(\mathrm{mod}~5),
\end{align*}
and the quantity $b_{l+1}$, as defined in (\ref{b_l}),
\begin{align*}
&f(\pm 1)-(\pm 1)=(\pm 1)^{5^t\cdot 4d+1}-(\pm 1)=\pm 1-(\pm 1)=0~\mathrm{and}\\
&f(\pm\bi)-(\pm\bi)=(\pm\bi)^{5^t\cdot 4d+1}-(\pm\bi)=\pm\bi-(\pm\bi)=0,
\end{align*}
hence
\begin{align*}
b_{l+1}(\pm 1)=\frac{f(\pm 1)-(\pm 1)}{5^{l+1}}=0~\mathrm{and}~b_{l+1}(\pm\bi)=\frac{f(\pm\bi)-(\pm\bi)}{5^{l+1}}=0.
\end{align*}
Therefore the cycles $\{1\}$, $\{-1\}$, $\{\bi\}$ and $\{-\bi\}$ split at level $l+1$.

For the cycles $\{1\}$ and $\{-1\}$, with the similar process of the proof of Proposition \ref{prop: p=3, m=1 (mod 3)}, we obtain $4\cdot 5^{t-1}$ growing lifts of length 1 at level $t+l+1$, respectively.

The lift of $\{\bi\}$ from level  $l+1$ to level $l+2$ is $\{\bi+5^{l+1}i\}$ with $i=0, \dots, 4$, where each point is fixed by $f$ at level $l+2$. We compute the quantity $A_{l+2}$ for the points $\bi+5^{l+1}i $ with $i=1, \dots, 4$, as defined in (\ref{def: A_l and B_l}). For these points, write of the form $\bi+5^{l+1} c$ in $\Z_5$ with $c=i+5g$ where $i=1,2,3$, or $4$ and for some $g\in \Z_5$. We compute that
\begin{align*}
a_{l+2}(\bi+5^{l+1}c)&=f'(\bi+5^{l+1}c)\\
&=(5^t\cdot 4d+1)(\bi+5^{l+1}c)^{5^t\cdot 4d}\\
&=\sum_{j=0}^{5^t\cdot 4d}\tbinom{5^t\cdot 4d}{j}\bi^{5^t\cdot 4d-j}5^{j(l+1)+t}c^j\cdot 4d+\sum_{j=0}^{5^t\cdot 4d}\tbinom{5^t\cdot 4d}{j}\bi^{5^t\cdot 4d-j}(5^{l+1}c)^j,
\end{align*}
For $j\geq 1$. the summand in the first term has the valuation
\begin{align*}
\nu_5\big(\tbinom{5^t\cdot 4d}{j}\bi^{5^t\cdot 4d-j}5^{j(l+1)+t}c^j\cdot 4d\big)&\geq t+\Big\lfloor\frac{j}{5}\Big\rfloor-1+j(l+1)+t-(j-4)\\
&\geq 2t+l+3,
\end{align*}
and the summand in the second term has the valuation
\begin{align*}
\nu_5\big(\tbinom{5^t\cdot 4d}{j}\bi^{5^t\cdot 4d-j}(5^{l+1}c)^j\big)&\geq t+\Big\lfloor\frac{j}{5}\Big\rfloor-1+j(l+1)-(j-4)\\
&\geq t+l+3.
\end{align*}
Therefore we obtain
\begin{align*}
a_{l+2}(\bi+5^{l+1}c)\equiv \tbinom{5^t\cdot 4d}{0}\bi^{5^t\cdot 4d}5^{t}\cdot 4d+\tbinom{5^t\cdot 4d}{0}\bi^{5^t\cdot 4d}\equiv 1+5^t\cdot 4d~(\mathrm{mod}~5^{t+l+2}),
\end{align*}
and then
\begin{align*}
A_{l+2}=\nu_5\big(a_{l+2}(\bi+5^{l+1}c)-1\big)=\nu_5(5^t\cdot 4d)=t.
\end{align*}

We compute the quantity $B_{l+2}$, as defined in (\ref{def: A_l and B_l}). We obtain that
\begin{align*}
f(\bi+5^{l+1}c)-(\bi+5^{l+1}c)&=(\bi+5^{l+1}c)^{5^t\cdot 4d+1}-(\bi+5^{l+1}c)\\
&=\sum_{j=0}^{5^t\cdot 4d+1}\tbinom{5^t\cdot 4d+1}{j}\bi^{5^t\cdot 4d+1-j}(5^{l+1}c)^j-(\bi+5^{l+1}c).
\end{align*}
For $j\geq 2$, the summand has the valuation
\begin{align*}
\nu_5\big(\tbinom{5^t\cdot 4d+1}{j}\bi^{5^t\cdot 4d+1-j}(5^{l+1}c)^j\big)&\geq t+\Big\lfloor\frac{j}{5}\Big\rfloor-1+j(l+1)-(j-4)\\
&\geq t+2l+3.
\end{align*}
Therefore, we obtain
\begin{align*}
f(\bi+5^{l+1}&c)-(\bi+5^{l+1}c)\\
&\equiv \tbinom{5^t\cdot 4d+1}{0}\bi^{5^t\cdot 4d+1}+\tbinom{5^t\cdot 4d+1}{1}\bi^{5^t\cdot 4d}(5^{l+1}c)-(\bi+5^{l+1}c)\\
&\equiv 5^{t+l+1}\cdot 4cd~(\mathrm{mod}~5^{t+l+2}),
\end{align*}
and then
\begin{align*}
B_{l+2}=\nu_5(b_{l+2})=\nu_5\big(\frac{f(\bi+5^{l+1}c)-(\bi+5^{l+1}c)}{5^{l+2}}\big)=t-1.
\end{align*}

If $0\leq l\leq t-3$, we obtain $A_{l+2}=t$ and $B_{l+2}=t-1$. By Definition (\ref{def: A_l and B_l}) of $B_{l+2}$, $B_{l+2}=t-1$ implies $B_{l+3}=t-2$, $B_{l+4}=t-3$, $\dots$, $B_{l+s+2}=t-s-1$ and $B_{l+s+3}=t-s-2$ for some $\frac{t-l-5}{2}<s\leq \frac{t-l-3}{2}$. Then, $B_{l+s+3}=t-s-2<t=A_{l+s+3}$ and $B_{l+s+3}=t-s-2<l+s+3$. By Proposition \ref{prop: lifts of splitting cycle for p>=3}, all the lifts of $\{\bi+5^{l+1}i\}$ at level $t+l+1$ grow forever. So, we obtain $4\cdot 5^{t-1}$ growing cycles of length 1 at level $t+l+1$.

For $l>t-3$, since $B_{l+2}=t-1<t=A_{l+2}$ and $B_{l+2}=t-1<l+2$, by Proposition \ref{prop: lifts of splitting cycle for p>=3}, all the lifts of $\{\bi+5^{l+1}i\}$ at level $t+l+1$ grow forever. So, we obtain $4\cdot 5^{t-1}$ growing cycles of length 1 at level $t+l+1$.

The same conclusion can be drawn for the lifts of $\{-\bi\}$ by the same argument of the proof for lifts of $\{\bi\}$. Combine these two, we obtain $16\cdot 5^{t-1}$ growing cycles of length 1 at every level $\geq t+1$.

2. For $m\equiv 6~(\mathrm{mod}~10)$, we compute that $f(1)=1$ and $f(j) \not\equiv j~(\mathrm{mod}~5)$ for $j=2,3,$ or $4$. So, only 1 is the fixed point at any level, which is different from statement 1. With the same argument, we obtain that the number of growing cycles at every level $\geq t+1$ is $4\cdot 5^{t-1}$, which is a quarter of that in statement 1. This completes the proof.

3. For $m\equiv 11~(\mathrm{mod}~20)$, we have $t\geq 1$. Write $m=5^t(2+4d)+1$ for some $d$. We compute that $f(\pm 1)=\pm1$ and $f(\pm\bi)=(\pm\bi)^{5^t(2+4d)+1}=\mp\bi$. So, $\pm 1$ are fixed points and $\{\bi,-\bi\}$ is a cycle of length 2 at any level. Let $l$ be any nonnegative integer and view  $\{1\}$, $\{-1\}$ and $\{\bi,-\bi\}$ as cycles at level $l+1$.

We compute the quantity $a_{l+1}$, as defined in (\ref{a_l}),
\begin{align*}
a_{l+1}(\pm 1)&=f'(\pm 1)=\big(5^t(2+4d)+1\big)(\pm 1)^{5^t(2+4d)} \equiv 1~(\mathrm{mod}~5),\\
a_{l+1}(\bi)&=f'(\bi)f'(-\bi)\\
&=\big(5^t(2+4d)+1\big)\bi^{5^t(2+4d)}\cdot \big(5^t(2+4d)+1\big)(-\bi)^{5^t(2+4d)}\\
&\equiv 1~(\mathrm{mod}~5),
\end{align*}
and the quantity $b_{l+1}$, as defined in (\ref{b_l}),
\begin{align*}
f(\pm 1)-(\pm 1)&=(\pm 1)^{5^t(2+4d)+1}-(\pm 1)=\pm 1-(\pm 1)=0,\\
f^2(\bi)-(\bi)&=f(-\bi)-(\bi)=\bi-\bi=0,
\end{align*}
and so
\begin{align*}
b_{l+1}(\pm 1)=\frac{f(\pm 1)-(\pm 1)}{5^{l+1}}=0~\mathrm{and}~b_{l+1}(\pm\bi)=\frac{f^2(\bi)-(\bi)}{5^{l+1}}=0.
\end{align*}
Hence, the cycles $\{1\}$, $\{-1\}$ and $\{\bi,-\bi\}$ split at level $l+1$.

From the cycles $\{1\}$ and $\{-1\}$, similarly with statement 1, we obtain $8\cdot 5^{t-1}$ growing cycles of length 1 at level $t+l+1$.

The lift of $\{\bi,-\bi\}$ from level $l+1$ to level $l+2$ is
$\cup_{i=0}^4 \{\bi+5^{l+1}i,-\bi-5^{l+1}i\}$,
where $\{\bi+5^{l+1}i,-\bi-5^{l+1}i\}$ is a cycle of length 2 by $f$ at level $l+2$. We compute the quantity $A_{l+2}$ for cycles $i\in\{1,2,3,4\}$, as defined in (\ref{def: A_l and B_l}).
\begin{align*}
a_{l+2}(&\bi+5^{l+1}i+5^{l+2}g_1)\\
&=f'(\bi+5^{l+1}i+5^{l+2}g_1)f'(-\bi-5^{l+1}i+5^{l+2}g_2)\\
&=\big(5^t(2+4d)+1\big)(\bi+5^{l+1}i+5^{l+2}g_1)^{5^t(2+4d)}\\
&\quad \times\big(5^t(2+4d)+1\big)(-\bi-5^{l+1}i+5^{l+2}g_2)^{5^t(2+4d)}\\
&=\big(1+5^t(4+8d)+5^{2t}(2+4d)^2\big)(1+5^{l+1}g)^{5^t(4+8d)}
\end{align*}
where $g_1, g_2 \in \Z_5$ and $g=-2\bi-5^{l+1}i^2+5(-g_1+g_2)(\bi+5^{l+1}i)+5^{l+3}g_1g_2$. Since $(1+5^{l+1}g)^{5^t(4+8d)}\equiv 1+5^{t+l+1}(4+8d)g~(\mathrm{mod}~5^{t+l+2})$,
\begin{align*}
a_{l+2}(&\bi+5^{l+1}i+5^{l+2}g_1)\\
&\equiv \big(1+5^t(4+8d)+5^{2t}(2+4d)^2\big)\big(1+5^{t+l+1}(4+8d)g\big)\\
&\equiv 1+5^t(4+8d)+5^{2t}(2+4d)^2+5^{t+l+1}(4+8d)g~(\mathrm{mod}~5^{t+l+2}).
\end{align*}
Therefore,
\begin{align*}
A_{l+2}=\nu_5\big(a_{l+2}(\bi+5^{l+1}i+5^{l+2}g)-1\big)=t.
\end{align*}

We compute the quantity $B_{l+2}$, as defined in (\ref{def: A_l and B_l}). Let $h=i+5g_1$, for $ \in \Z_5$.
\begin{align*}
f(\bi+5^{l+1}h)&=(\bi+5^{l+1}h)^{5^t(2+4d)+1}\\
&=\sum_{j=0}^{5^t(2+4d)+1}\tbinom{5^t(2+4d)+1}{j}\bi^{5^t(2+4d)+1-j}(5^{l+1}h)^j.
\end{align*}
For $j\geq 2$, the summand has the valuation
\begin{align*}
\nu_5\big(\tbinom{5^t(2+4d)+1}{j}\bi^{5^t(2+4d)+1}(5^{l+1}h)^j\big)&\geq t+\Big\lfloor\frac{j}{5}\Big\rfloor-1+j(l+1)-(j-4)\\
&\geq t+2l+3.
\end{align*}
So, we obtain
\begin{align*}
f(\bi+&5^{l+1}h)\\
&\equiv \tbinom{5^t(2+4d)+1}{0}\bi^{5^t(2+4d)+1}+\tbinom{5^t(2+4d)+1}{1}\bi^{5^t(2+4d)}(5^{l+1}h)\\
&\equiv -\bi-\big(5^t(2+4d)+1\big)5^{l+1}h~(\mathrm{mod}~5^{t+l+2}).
\end{align*}
Now we compute the following.
\begin{align*}
&f^2(\bi+5^{l+1}h)-(\bi+5^{l+1}h)\\
&\equiv -f\Big(\bi+\big(5^t(2+4d)+1\big)5^{l+1}h\Big)-(\bi+5^{l+1}h)\\
&\equiv -\sum_{j=0}^{5^t(2+4d)+1}\tbinom{5^t(2+4d)+1}{j}\bi^{5^t(2+4d)+1-j}\Big(\big(5^t(2+4d)+1\big)5^{l+1}h\Big)^j\\
&\quad -(\bi+5^{l+1}h)~(\mathrm{mod}~5^{t+l+2}).
\end{align*}
For $j\geq 2$, the summand has the valuation
\begin{align*}
\nu_5\bigg(\tbinom{5^t(2+4d)+1}{j}\bi^{5^t(2+4d)+1-j}&\Big(\big(5^t(2+4d)+1\big)5^{l+1}h\Big)^j\bigg)\\
&\geq t+\Big\lfloor\frac{j}{5}\Big\rfloor-1+j(l+1)-(j-4)\\
&\geq t+2l+3.
\end{align*}
So, we obtain
\begin{align*}
f^2(\bi+5^{l+1}h)&-(\bi+5^{l+1}h)\\
&\equiv -\tbinom{5^t(2+4d)+1}{0}\bi^{5^t(2+4d)+1}\\
&\quad -\tbinom{5^t(2+4d)+1}{1}\bi^{5^t(2+4d)}\Big(\big(5^t(2+4d)+1\big)5^{l+1}h\Big)-(\bi+5^{l+1}h)\\
&\equiv 5^{t+l+1}(4+8d)h~(\mathrm{mod}~5^{t+l+2}).
\end{align*}
Therefore,
\begin{align*}
B_{l+2}=\nu_5(b_{l+2})=\nu_5\big(\frac{f^2(\bi+5^{l+1}h)-(\bi+5^{l+1}h)}{5^{l+2}}\big)=t-1.
\end{align*}

If $0\leq l\leq t-3$, we obtain $A_{l+2}=t$ and $B_{l+2}=t-1$. By Definition (\ref{def: A_l and B_l}) of $B_{l+2}$, $B_{l+2}=t-1$ implies $B_{l+3}=t-2$, $B_{l+4}=t-3$, $\dots$, $B_{l+s+2}=t-s-1$ and $B_{l+s+3}=t-s-2$ for some $\frac{t-l-5}{2}<s\leq \frac{t-l-3}{2}$. Then, $B_{l+s+3}=t-s-2<t=A_{l+s+3}$ and $B_{l+s+3}=t-s-2<l+s+3$. By Proposition \ref{prop: lifts of splitting cycle for p>=3}, all the lifts of $\{\bi+5^{l+1}i,-\bi-5^{l+1}i\}$ at level $t+l+1$ grow forever. So, we obtain $4\cdot 5^{t-1}$ growing cycles of length 2 at level $t+l+1$.

For $l>t-3$, since $B_{l+2}=t-1<t=A_{l+2}$ and $B_{l+2}=t-1<l+2$, by Proposition \ref{prop: lifts of splitting cycle for p>=3}, all the lifts of $\{\bi+5^{l+1}i,-\bi-5^{l+1}i\}$ at level $t+l+1$ grow forever. So, we obtain $4\cdot 5^{t-1}$ growing cycles of length 2 at level $t+l+1$.

Therefore, $f(x)$ has $8\cdot 5^{t-1}$ growing cycles of length 1 and $4\cdot 5^{t-1}$ growing cycles of length 2 at level $\geq t+1$, which complete the proof.
\end{proof}

By Propositions \ref{prop:pZ_p attracting basin} and \ref{prop: p=5, m=1 (mod 5)}, we conclude that the following is true.

\begin{theorem}\label{thm:m=1(mod5)}
Let $f(x)=x^m$ over $\Z_5$ and assume $m\geq 2$ with $m\equiv 1~(\mathrm{mod}~5)$. Let $t=\nu_5(m-1)$.
\begin{enumerate}
\item If $m\equiv 1~(\mathrm{mod}~20)$, then the minimal decomposition of $\Z_5$ for $f(x)$ is
\begin{align*}
\Z_5=\{0,\pm 1,\pm \bi\}\bigsqcup \big(\bigcup_{l\geq 0}\bigcup_{a=0}^{5^{t-1}-1}\bigcup_{i=1}^4\bigcup_{j=1}^4 M_{l,a,i,j}\big)\bigsqcup (5\Z_5-\{0\}),
\end{align*}
where
\begin{align*}
M_{l,a,i,1}&=1+5^{l+1}i+5^{l+2}a+5^{t+l+1}\Z_5,\\
M_{l,a,i,2}&=-1+5^{l+1}i+5^{l+2}a+5^{t+l+1}\Z_5,\\
M_{l,a,i,3}&=\bi+5^{l+1}i+5^{l+2}a+5^{t+l+1}\Z_5~\mathrm{and}\\
M_{l,a,i,4}&=-\bi+5^{l+1}i+5^{l+2}a+5^{t+l+1}\Z_5.
\end{align*}
Here, $\{0,\pm 1,\pm \bi\}$ is the set of fixed points, $M_{l,a,i,j}$'s are the minimal components, and $5\Z_5-\{0\}$ is the attracting basin of the fixed point 0.

\item If $m\equiv 6~(\mathrm{mod}~10)$, then the minimal decomposition of $\Z_5$ for $f(x)$ is
\begin{align*}
\Z_5&=\{0,1\}\bigsqcup \big(\bigcup_{l\geq 0}\bigcup_{a=0}^{5^{t-1}-1}\bigcup_{i=1}^4 M_{l,a,i}\big)\\
&\quad \bigsqcup \big((5\Z_5-\{0\})\cup(2+5\Z_5)\cup(3+5\Z_5)\cup(4+5\Z_5)\big),
\end{align*}
where $M_{l,a,i}=1+5^{l+1}i+5^{l+2}a+5^{t+l+1}\Z_5$. Here, $\{0,1\}$ is the set of fixed points, $M_{l,a,i}$'s are the minimal components, and $(5\Z_5-\{0\})$ is the attracting basin of the fixed point 0 and $(2+5\Z_5)\cup(3+5\Z_5)\cup(4+5\Z_5)$ is the attracting basin of $1+5\Z_5$.

\item If $m\equiv 11~(\mathrm{mod}~20)$, then the minimal decomposition of $\Z_5$ for $f(x)$ is
\[ \Z_5=\{0,\pm 1,\pm\bi\}\bigsqcup \big(\bigcup_{l\geq 0}\bigcup_{a=0}^{5^{t-1}-1}\bigcup_{i=1}^4\bigcup_{j=1}^3 M_{l,a,i,j}\big) \bigsqcup (5\Z_5-\{0\}), \]
where
\begin{align*}
M_{l,a,i,1}&=1+5^{l+1}i+5^{l+2}a+5^{t+l+1}\Z_5,\\
M_{l,a,i,2}&=-1+5^{l+1}i+5^{l+2}a+5^{t+l+1}\Z_5~\mathrm{and}\\
M_{l,a,i,3}&=(\bi+5^{l+1}i+5^{l+2}a+5^{t+l+1}\Z_5)\\
&\quad \cup(-\bi-5^{l+1}i-5^{l+2}a+5^{t+l+1}\Z_5).
\end{align*}
Here, $\{0,\pm 1,\pm\bi\}$ is the set of periodic points, where $0,\pm 1$ are fixed points and $\pm\bi$ are two periodic points to each other. $M_{l,a,i,j}$'s are the minimal components, and $5\Z_5-\{0\}$ is the attracting basin.
\end{enumerate}
\end{theorem}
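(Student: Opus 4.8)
The plan is to assemble the three-part decomposition of Theorem \ref{thm:decomposition} directly from the two propositions just proved, treating the three congruence classes of $m$ in parallel. The periodic set is read off from the fixed- and periodic-point analysis already recorded in Proposition \ref{prop: p=5, m=1 (mod 5)}: in case (1) the points $0,\pm 1,\pm\bi$ are all fixed; in case (2) only $0$ and $1$ are fixed; and in case (3) the points $0,\pm 1$ are fixed while $\{\bi,-\bi\}$ is a genuine $2$-cycle. The attracting-basin contribution of the residue class $0\pmod 5$ is supplied verbatim by Proposition \ref{prop:pZ_p attracting basin}, which places $0$ together with all of $5\Z_5-\{0\}$ in its basin.

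The core of the argument is to convert the growing cycles from Proposition \ref{prop: p=5, m=1 (mod 5)} into the explicit minimal components $M_{l,a,i,j}$. Those cycles grow forever from level $t+l+1$ onward, and since $t\geq 1$ and $l\geq 0$ these levels are all $\geq 2$, so by statement 1 of Proposition \ref{prop:behavior of lifts for p>=3} applied at each successive level the surrounding ball carries a single cycle of length $5^s k$ at level $t+l+1+s$ for every $s\geq 0$; this odometer structure forces $f$ to be minimal on the ball, so the ball is a minimal component. I would then read the balls off explicitly: tracing the successive splittings recorded in the proof of Proposition \ref{prop: p=5, m=1 (mod 5)}, each growing cycle at level $t+l+1$ has center $w:=c+5^{l+1}i+5^{l+2}a$, where $c$ ranges over the relevant periodic points ($1$ in every case, also $-1$ in cases (1) and (3), and $\pm\bi$ in cases (1) and (3)), $i\in\{1,2,3,4\}$, and $a\in\{0,\dots,5^{t-1}-1\}$. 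Each such $w$ yields the ball $w+5^{t+l+1}\Z_5$, which is the listed $M_{l,a,i,j}$; in case (3) the length-$2$ cycle through $\bi+5^{l+1}i+5^{l+2}a$ glues two such balls into the single component $M_{l,a,i,3}$.

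What remains, and is the step I expect to demand the most care, is to verify that these balls partition the nonperiodic part of each residue class. Fixing a base periodic point $c$ and a point $x\equiv c\pmod 5$ with $x\neq c$, the valuation $\nu_5(x-c)=l+1$ determines $l$, the leading nonzero digit of $(x-c)5^{-(l+1)}$ determines $i$, and the next $t-1$ digits determine $a$, so $x$ lands in exactly one of the listed balls; the periodic point $c$ itself is excluded since $\nu_5(c-c)=\infty$. Disjointness follows from the nested partition structure of the tree of lifts, and matching the index ranges against the cycle counts of Proposition \ref{prop: p=5, m=1 (mod 5)} ($16\cdot 5^{t-1}$ in case (1), $4\cdot 5^{t-1}$ in case (2), and $8\cdot 5^{t-1}$ length-$1$ plus $4\cdot 5^{t-1}$ length-$2$ in case (3)) confirms that nothing is double-counted or omitted. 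Finally, in case (2) the classes $2,3,4\pmod 5$ carry no minimal components because $-1$ and $\pm\bi$ fail to be fixed when $m\equiv 6\pmod{10}$; a direct residue computation (exactly as in the $\Z_3$ argument, where $f(2)\equiv 1$) shows that $f$ maps each of $2+5\Z_5$, $3+5\Z_5$, and $4+5\Z_5$ into $1+5\Z_5$, so these classes lie in the attracting basin of the fixed point $1$, which completes the decomposition.
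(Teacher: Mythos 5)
Your route coincides with the paper's: read the periodic set and the basin of $0$ off Propositions \ref{prop:pZ_p attracting basin} and \ref{prop: p=5, m=1 (mod 5)}, identify the lifts at level $t+l+1$ as the cycles $\{c+5^{l+1}i+5^{l+2}a\}$ (glued in pairs in case (3)), and conclude that the corresponding balls of radius $5^{-(t+l+1)}$ are the minimal components. The two details you spell out that the paper leaves implicit --- that a cycle growing forever yields a single cycle of length $5^s k$ at every level $t+l+1+s$ by statement 1 of Proposition \ref{prop:behavior of lifts for p>=3}, hence minimality on the invariant clopen set, and that the digit expansion of $x-c$ (valuation gives $l$, leading digit gives $i$, the next $t-1$ digits give $a$) shows the listed balls partition the nonperiodic part of each residue class --- are both correct and strengthen the argument.

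There is, however, one concretely false step in your case (2): the claim that $f$ maps each of $2+5\Z_5$, $3+5\Z_5$, $4+5\Z_5$ directly into $1+5\Z_5$, ``exactly as in the $\Z_3$ argument.'' Unlike the $\Z_3$ situation, the hypothesis $m\equiv 6~(\mathrm{mod}~10)$ does not pin down $m \bmod 4$, which governs the residues $2^m, 3^m \pmod 5$. Writing $m=6+10d$: if $d$ is even then $m\equiv 2~(\mathrm{mod}~4)$, so $2^m\equiv 3^m\equiv 4~(\mathrm{mod}~5)$ (for instance $2^6=64\equiv 4$), and $f$ sends $2+5\Z_5$ and $3+5\Z_5$ into $4+5\Z_5$, not into $1+5\Z_5$; only when $d$ is odd do all three classes map directly into $1+5\Z_5$. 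This is why the paper splits into the subcases $d$ even and $d$ odd. The repair is local: since $f(4+5\Z_5)\subset 1+5\Z_5$ in both subcases, every orbit from these classes enters $1+5\Z_5$ within two iterates, so they still lie in the attracting basin of the fixed point $1$ and your decomposition is unaffected.
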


\begin{proof}
1. The lifts of $\{1+5^{l+1}i\}$, $\{-1+5^{l+1}i\}$, $\{\bi+5^{l+1}i\}$ and $\{-\bi+5^{l+1}i\}$ from level $l+2$ to level $t+l+1$ are of the forms $\{1+5^{l+1}i+5^{l+2}a\}$, $\{-1+5^{l+1}i+5^{l+2}a\}$, $\{\bi+5^{l+1}i+5^{l+2}a\}$ and $\{-\bi+5^{l+1}i+5^{l+2}a\}$, respectively, where $i\in\{1,2,3,4\}$ and $a\in\{0,1,\dots,5^{t-1}-1\}$. Note that when $t=1$, the value $a$ is only $0$. Hence from Propositions \ref{prop:pZ_p attracting basin} and \ref{prop: p=5, m=1 (mod 5)}, the statement is proved.

2. The lifts of $\{1+5^{l+1}i\}$ from level $l+2$ to  level $t+l+1$ are of the forms $\{1+5^{l+1}i+5^{l+2}a\}$, respectively, where $i\in\{1,2,3,4\}$ and $a\in\{0,\dots,5^{t-1}-1\}$.

Write $m=6+10d$, then $d\geq 0$. If $d$ is even, then $f(2)\equiv 4$, $f(3)\equiv 4$ and $f(4)\equiv 1$ (mod 5), which imply that $f(2+5\Z_5)\cup f(3+5\Z_5)\subset 4+5\Z_5$ and $f(4+5\Z_5)\subset 1+5\Z_5$. So, $(2+5\Z_5)\cup (3+5\Z_5)\cup (4+5\Z_5)$ lies in the attracting basin of $1+5\Z_5$.
If $d$ is odd, then $f(2)\equiv 1$, $f(3)\equiv 1$ and $f(4)\equiv 1$ (mod 5), which imply that $f(2+5\Z_5)\cup f(3+5\Z_5)\cup f(4+5\Z_5)\subset 1+5\Z_5$. So, $(2+5\Z_5)\cup (3+5\Z_5)\cup (4+5\Z_5)$ lies in the attracting basin of $1+5\Z_5$.
Hence from Propositions \ref{prop:pZ_p attracting basin} and \ref{prop: p=5, m=1 (mod 5)}, the statement is proved.

3. The lifts of $\{1+5^{l+1}i\}$, $\{-1+5^{l+1}i\}$, $\{\bi+5^{l+1}i,-\bi-5^{l+1}i\}$ from level $l+2$ to level $t+l+1$ are of the forms $\{1+5^{l+1}i+5^{l+2}a\}$, $\{-1+5^{l+1}i+5^{l+2}a\}$, $\{\bi+5^{l+1}i+5^{l+2}a,-\bi-5^{l+1}i-5^{l+2}a\}$, respectively, where $i\in\{1,2,3,4\}$ and $a\in\{0,\dots,5^{t-1}-1\}$. Hence from Propositions \ref{prop:pZ_p attracting basin} and \ref{prop: p=5, m=1 (mod 5)}, the statement is proved.
\end{proof}

Now we consider the case $m\equiv -1 ~(\mathrm{mod}~5)$.

\begin{proposition}\label{prop: p=5, m=4 (mod 5)}
Let $f(x)=x^m$ over $\Z_5$ and assume $m\geq 2$ with $m\equiv -1~(\mathrm{mod}~5)$. Let $t=\nu_5(m+1)$.
\begin{enumerate}
\item If $m\equiv 4~(\mathrm{mod}~10)$, then $f(x)$ has $2\cdot 5^{t-1}$ growing cycles of length 2 at every level $\geq t+1$.

\item If $m\equiv 9~(\mathrm{mod}~10)$, then $f(x)$ has $8\cdot 5^{t-1}$ growing cycles of length 2 at every level $\geq t+1$.
\end{enumerate}
\end{proposition}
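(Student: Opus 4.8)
The plan is to follow the template of Propositions \ref{prop: p=3, m=-1 (mod 3)} and \ref{prop: p=5, m=1 (mod 5)}, tracking how the $f$-periodic fourth roots of unity lift through the levels. First I would write $m = 5^t k - 1$ with $5 \nmid k$, so that $t = \nu_5(m+1) \geq 1$; here the parity of $m$ separates the two statements, with $m$ even (whence $k$ odd) in statement~1 and $m$ odd (whence $k$ even) in statement~2. Using the factorization $x^5 - x = x(x-1)(x+1)(x-\bi)(x+\bi)$, the relevant seeds live among the roots of unity $\mu = \{1, -1, \bi, -\bi\}$ that are $f$-periodic, and which these are depends on $m \bmod 4$: for $m$ even only $1$ is fixed; for $m \equiv 1 \pmod 4$ all of $\pm 1, \pm\bi$ are fixed; and for $m \equiv 3 \pmod 4$ the points $\pm 1$ are fixed while $\{\bi, -\bi\}$ is a genuine $2$-cycle.

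Next I would classify the lifting behaviour of each seed, viewing it at level $l+1$ for an arbitrary nonnegative integer $l$. For any fixed $\zeta \in \mu$ one has $a_{l+1}(\zeta) = f'(\zeta) = m\zeta^{m-1} = m \equiv -1 \pmod 5$, so by Definition \ref{def:movement for p>=3} the cycle $\{\zeta\}$ partially splits with $d = \mathrm{ord}(-1) = 2$ in $(\Z/5\Z)^*$; by statement~4 of Proposition \ref{prop:behavior of lifts for p>=3} its lift at level $l+2$ consists of one fixed point together with $(5-1)/2 = 2$ cycles of length $2$. For the $2$-cycle $\{\bi, -\bi\}$ one computes $a_{l+1} = f'(\bi)f'(-\bi) = m^2 \equiv 1 \pmod 5$ and $b_{l+1} = (f^2(\bi) - \bi)/5^{l+1} = 0$, so it splits; by statement~2 of Proposition \ref{prop:behavior of lifts for p>=3} it lifts to $5$ two-cycles at level $l+2$, of which the one still carrying the exact roots keeps splitting and the other four are generic seeds. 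Thus each periodic root of unity produces a fixed number of \emph{seed} $2$-cycles at level $l+2$: two per partially splitting fixed point, and four from the $2$-cycle $\{\bi, -\bi\}$.

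The heart of the argument is the valuation computation for each seed $2$-cycle $\sigma$ at level $l+2$. As in the proofs of Propositions \ref{prop: p=3, m=-1 (mod 3)} and \ref{prop: p=5, m=1 (mod 5)}, I would expand $f$ and $f^2$ by the binomial theorem and bound the $5$-adic valuations of the tail terms to obtain, for every seed,
\[
B_{l+2}(\sigma) = \nu_5\big(b_{l+2}(\sigma)\big) = t - 1,
\]
while $A_{l+2}(\sigma) = \nu_5\big(a_{l+2}(\sigma) - 1\big)$ equals $t$ for the $\{\bi,-\bi\}$-seeds and satisfies $A_{l+2} \geq \min(l+2, t)$ for the partially splitting seeds. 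For the partially splitting seeds this uses the expansion of $(1 + 5^{l+1}c)^{5^t k}$, and for the $\{\bi, -\bi\}$-seeds it passes through $f(\bi + 5^{l+1}h)$ together with the congruence $(1 + 5^{l+1}h)^{5^t k} \equiv 1 + 5^{t+l+1}kh \pmod{5^{t+l+2}}$. Since $B_{l+2} = t-1 < t \leq A_n$ in the relevant range (tracking, when $l \leq t-3$, the successive drops $B_{l+2} = t-1, B_{l+3} = t-2, \dots$ against the lower bound on $A_n$), Proposition \ref{prop: lifts of splitting cycle for p>=3} shows that every lift of $\sigma$ grows forever at level $t+l+1$, and that there are exactly $5^{t-1}$ such growing $2$-cycles per seed. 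Letting $l$ range over the nonnegative integers then covers every level $\geq t+1$.

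Finally I would tally the seeds. In statement~1 the only periodic root of unity is $1$, giving $2$ seeds and hence $2 \cdot 5^{t-1}$ growing $2$-cycles. In statement~2 the total is $8 \cdot 5^{t-1}$ in either sub-case: when $m \equiv 1 \pmod 4$ the four fixed points $\pm 1, \pm\bi$ give $4 \times 2 = 8$ seeds, while when $m \equiv 3 \pmod 4$ the fixed points $\pm 1$ give $2 \times 2 = 4$ seeds and the $2$-cycle $\{\bi, -\bi\}$ gives $4$ more, again $8$ in total. The main obstacle is twofold: the valuation bookkeeping must establish $B_{l+2} = t-1$ and the required lower bounds on $A_{l+2}$ uniformly for both types of seed, which demands careful control of the binomial-tail valuations (including the inverse-power series for the length-$2$ cycle computed through $f^2$); and one must check that the two structurally different configurations of periodic roots of unity in statement~2 genuinely yield the same count $8 \cdot 5^{t-1}$, which is the one new feature beyond the already-treated cases.
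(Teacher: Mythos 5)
Your proposal is correct and matches the paper's approach exactly: the paper in fact omits the proof of Proposition~\ref{prop: p=5, m=4 (mod 5)}, stating only that it is similar to those of Propositions~\ref{prop: p=3, m=-1 (mod 3)} and~\ref{prop: p=5, m=1 (mod 5)}, and your reconstruction (partial splitting with $d=2$ at the fixed roots of unity, splitting of the genuine $2$-cycle $\{\bi,-\bi\}$, the valuation computations $B_{l+2}=t-1$ with the lower bounds on $A_{l+2}$, and the count of $5^{t-1}$ growing lifts per seed via Proposition~\ref{prop: lifts of splitting cycle for p>=3}) is precisely the omitted argument, with the seed tallies $2\cdot 5^{t-1}$ and $8\cdot 5^{t-1}$ agreeing with the minimal components listed in Theorem~\ref{thm:m=-1(mod5)}. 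Your observation that the two sub-cases $m\equiv 9$ and $m\equiv 19\pmod{20}$ yield the same count $8\cdot 5^{t-1}$ through structurally different configurations ($4\times 2$ seeds from four partially splitting fixed points versus $2\times 2+4$ seeds from two fixed points plus the splitting $2$-cycle) is exactly the distinction the paper encodes in parts 2 and 3 of that theorem.
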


We omit the proof of Proposition \ref{prop: p=5, m=4 (mod 5)} because it is similar to that of Propositions \ref{prop: p=3, m=-1 (mod 3)} and \ref{prop: p=5, m=1 (mod 5)}. By Propositions \ref{prop:pZ_p attracting basin} and \ref{prop: p=5, m=4 (mod 5)}, we conclude that the following is true.

\begin{theorem}\label{thm:m=-1(mod5)}
Let $f(x)=x^m$ over $\Z_5$ and assume $m\geq 2$ with $m\equiv -1~(\mathrm{mod}~5)$. Let $t=\nu_5(m+1)$.
\begin{enumerate}
\item If $m\equiv 4~(\mathrm{mod}~10)$, then the minimal decomposition of $\Z_5$ for $f(x)$ is
\begin{align*}
\Z_5&=\{0,1\}\bigsqcup \big(\bigcup_{l\geq 0}\bigcup_{a=0}^{5^{t-1}-1}\bigcup_{i=1}^2 M_{l,a,i}\big)\\
&\quad \bigsqcup \big((5\Z_5-\{0\})\cup(2+5\Z_5)\cup(3+5\Z_5)\cup(4+5\Z_5)\big),
\end{align*}
where $M_{l,a,i}=(1+5^{l+1}i+5^{l+2}a+5^{t+l+1}\Z_5) \cup\big((1+5^{l+1}i+5^{l+2}a)^{-1}+5^{t+l+1}\Z_5\big)$. Here, $\{0,1\}$ is the set of fixed points, $M_{l,a,i}$'s are the minimal components, and $5\Z_5-\{0\}$ and $(2+5\Z_5)\cup(3+5\Z_5)\cup(4+5\Z_5)$ are the attracting basin of 0 and $1+5\Z_5$, respectively.

\item If $m\equiv 9~(\mathrm{mod}~20)$, then the minimal decomposition of $\Z_5$ for $f(x)$ is
\begin{align*}
\Z_5&=\{0,\pm 1,\pm \bi\}\bigsqcup \Big(\big(\bigcup_{l\geq 0}\bigcup_{a=0}^{5^{t-1}-1}\bigcup_{i=1}^2\bigcup_{j=1}^4 M_{l,a,i,j}\big)\bigsqcup (5\Z_5-\{0\}),
\end{align*}
where
\begin{align*}
M_{l,a,i,1}&=(1+5^{l+1}i+5^{l+2}a+5^{t+l+1}\Z_5)\\
&\quad \cup\big((1+5^{l+1}i+5^{l+2}a)^{-1}+5^{t+l+1}\Z_5\big),\\
M_{l,a,i,2}&=(-1+5^{l+1}i+5^{l+2}a+5^{t+l+1}\Z_5)\\
&\quad \cup\big((-1+5^{l+1}i+5^{l+2}a)^{-1}+5^{t+l+1}\Z_5\big),\\
M_{l,a,i,3}&=(\bi+5^{l+1}i+5^{l+2}a+5^{t+l+1}\Z_5)\\
&\quad \cup\big((\bi+5^{l+1}i+5^{l+2}a)^{-1}+5^{t+l+1}\Z_5\big) ~\mathrm{and}\\
M_{l,a,i,4}&=(-\bi+5^{l+1}i+5^{l+2}a+5^{t+l+1}\Z_5)\\
&\quad \cup\big((-\bi+5^{l+1}i+5^{l+2}a)^{-1}+5^{t+l+1}\Z_5\big).
\end{align*}
Here, $\{0,\pm 1,\pm \bi\}$ is the set of fixed points, $M_{l,a,i,j}$'s are the minimal components, and $5\Z_5-\{0\}$ is the attracting basin of the fixed point 0.

\item If $m\equiv 19~(\mathrm{mod}~20)$, then the minimal decomposition of $\Z_5$ for $f(x)$ is
\begin{align*}
\Z_5&=\{0,\pm 1,\pm \bi\}\bigsqcup \Big(\big(\bigcup_{l\geq 0}\bigcup_{a=0}^{5^{t-1}-1}\bigcup_{i=1}^2\bigcup_{j=1}^2 M_{l,a,i,j}\big)\\
&\quad \cup \big(\bigcup_{l\geq 0}\bigcup_{a=0}^{5^{t-1}-1}\bigcup_{i=1}^4 M'_{l,a,i}\big)\Big)\bigsqcup (5\Z_5-\{0\}),
\end{align*}
where
\begin{align*}
M_{l,a,i,1}&=(1+5^{l+1}i+5^{l+2}a+5^{t+l+1}\Z_5)\\
&\quad \cup\big((1+5^{l+1}i+5^{l+2}a)^{-1}+5^{t+l+1}\Z_5\big),\\
M_{l,a,i,2}&=(-1+5^{l+1}i+5^{l+2}a+5^{t+l+1}\Z_5)\\
&\quad \cup\big((-1+5^{l+1}i+5^{l+2}a)^{-1}+5^{t+l+1}\Z_5\big)~\mathrm{and}\\
M'_{l,a,i}&=(\bi+5^{l+1}i+5^{l+2}a+5^{t+l+1}\Z_5)\\
&\quad \cup\big((\bi+5^{l+1}i+5^{l+2}a)^{-1}+5^{t+l+1}\Z_5\big).
\end{align*}
Here, $\{0,\pm 1,\pm \bi\}$ is the set of periodic points, where $0,\pm1$ are fixed points and $\pm\bi$ are two periodic points to each other. $M_{l,a,i,j}$'s and $M'_{l,a,i}$'s are the minimal components, and $5\Z_5-\{0\}$ is the attracting basin of the fixed point 0.
\end{enumerate}
\end{theorem}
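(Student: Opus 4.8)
The plan is to read off the three pieces of the decomposition from the two facts already available. Proposition \ref{prop:pZ_p attracting basin} puts $0$ together with the punctured ball $5\Z_5-\{0\}$ into the attracting part, and Proposition \ref{prop: p=5, m=4 (mod 5)} supplies the growing $2$-cycles that become the minimal components. Exactly as in the proofs of Theorem \ref{thm:m=1(mod5)} and the preceding $\Z_3$ theorems, a cycle that grows forever generates a minimal subsystem on the clopen set it sweeps out: iterating statement 1 of Proposition \ref{prop:behavior of lifts for p>=3} keeps it a single cycle at every higher level, so $f$ restricted to the corresponding finite union of balls is an odometer, hence minimal. Thus each growing $2$-cycle at level $t+l+1$ yields one minimal component, a union of two balls of radius $5^{-(t+l+1)}$, and the number of these must be reconciled with the totals $2\cdot 5^{t-1}$ and $8\cdot 5^{t-1}$ in Proposition \ref{prop: p=5, m=4 (mod 5)}. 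The remaining work is to pin down the periodic set, to exhibit the explicit balls of each component, and to account for the residue classes absorbed by an attracting basin.

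First I would determine the periodic points. From the factorization $x^5-x=x(x-1)(x+1)(x-\bi)(x+\bi)$ we have $\bi^2=-1$, so $\{1,\bi,-1,-\bi\}$ is the group of fourth roots of unity and $f(\zeta)=\zeta^{m}=\zeta^{\,m\bmod 4}$ on it. Reducing $m$ modulo $4$ in the three cases $m\equiv 4,9,19\ (\mathrm{mod}\ 20)$ gives: when $m$ is even only $1$ is fixed among the units; when $m\equiv 1\ (\mathrm{mod}\ 4)$ all of $\pm 1,\pm\bi$ are fixed; and when $m\equiv 3\ (\mathrm{mod}\ 4)$ the points $\pm 1$ are fixed while $f(\bi)=-\bi$, $f(-\bi)=\bi$, so $\{\bi,-\bi\}$ is a genuine $2$-cycle. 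With $0$ adjoined this is the periodic set claimed in each case.

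Next I would describe the components by lifting the (partially) splitting cycles of Proposition \ref{prop: p=5, m=4 (mod 5)} up to level $t+l+1$. Writing $m=5^{t}k-1$ with $5\nmid k$ and taking a unit $u=\omega(1+5w)$ in one of the balls, where $\omega\in\{1,-1,\bi,-\bi\}$ is its Teichm\"uller part, the identity $f(u)=u^{5^{t}k}u^{-1}$ together with $(1+5w)^{5^{t}k}\equiv 1\ (\mathrm{mod}\ 5^{t+l+1})$ gives $f(u)\equiv \omega^{5^{t}k}u^{-1}\ (\mathrm{mod}\ 5^{t+l+1})$. Hence the second element of each $2$-cycle sits in the ball around $\omega^{5^{t}k}u^{-1}$, and this is what fixes the second ball of every component; here $\omega^{5^{t}k}=1$ for $\omega=\pm 1$, while for $\omega=\pm\bi$ one has $\omega^{5^{t}k}=-1$ in the case $m\equiv 9\ (\mathrm{mod}\ 20)$ and $\omega^{5^{t}k}=1$ in the case $m\equiv 19\ (\mathrm{mod}\ 20)$. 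The index bookkeeping is the delicate part: near a fixed point both cycle elements lie in the same class $j+5\Z_5$ and the induced map on sub-balls is $s\mapsto -s$, pairing $i$ with $-i$ and folding its range to $\{1,2\}$, whereas for the $2$-cycle $\{\bi,-\bi\}$ of the third case the orbit straddles $2+5\Z_5$ and $3+5\Z_5$, no folding occurs, and the full range $i\in\{1,2,3,4\}$ survives. This is precisely what separates the indexing of the $\pm\bi$-components in the cases $m\equiv 9$ and $m\equiv 19\ (\mathrm{mod}\ 20)$, and it is the step I would verify most carefully so that the component count matches Proposition \ref{prop: p=5, m=4 (mod 5)}.

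Finally I would assemble the attracting part and check disjointness. Proposition \ref{prop:pZ_p attracting basin} gives $5\Z_5-\{0\}$ inside the basin of $0$. In the case $m\equiv 4\ (\mathrm{mod}\ 10)$ the classes $2+5\Z_5,3+5\Z_5,4+5\Z_5$ contain no periodic point and no minimal component (all of which lie near $1$), and a direct computation of $f(j)\bmod 5$, splitting on the parity of $d$ where $m=10d+4$ exactly as in the $m\equiv 0,6\ (\mathrm{mod}\ 10)$ theorems, shows each of them is carried into $1+5\Z_5$; hence they belong to $\mathcal{B}$. In the remaining two cases every unit class already contains a fixed or periodic point with its minimal components accumulating at it. Verifying that the periodic points, the balls $M_{l,a,i,j}$ (resp.\ $M'_{l,a,i}$), and the attracting classes are pairwise disjoint and exhaust $\Z_5$ then finishes the proof. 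The main obstacle is the third paragraph: getting the Teichm\"uller sign $\omega^{5^{t}k}$ and the sub-ball permutation $s\mapsto -s$ correct, since these simultaneously determine the inverse form of the second ball and whether $i$ folds to $\{1,2\}$ or runs over $\{1,2,3,4\}$.
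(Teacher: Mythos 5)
Your proposal is correct and follows essentially the same route as the paper: the paper's proof likewise reads the decomposition off Propositions \ref{prop:pZ_p attracting basin} and \ref{prop: p=5, m=4 (mod 5)}, identifying the level-$(t+l+1)$ lifts as the explicit pairs $\{u,\pm u^{-1}\}$, with the same folding of the sub-ball index to $i\in\{1,2\}$ near fixed points and the full range $i\in\{1,2,3,4\}$ for the genuine $2$-cycle $\{\bi,-\bi\}$ when $m\equiv 19\pmod{20}$, and with the same residue-class check (parity of $d$) sending $2,3,4+5\Z_5$ into the basin of $1+5\Z_5$ in the case $m\equiv 4\pmod{10}$. Your Teichm\"uller-sign computation, giving $\omega^{5^tk}=-1$ for $\omega=\pm\bi$ when $m\equiv 9\pmod{20}$, is precisely the detail the paper's proof records by writing the second cycle element as $-(\bi+5^{l+1}i+5^{l+2}a)^{-1}$, so your derivation agrees with the paper's proof (and correctly places both balls of each such component in the same class modulo $5$).
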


\begin{proof}
1. The lifts of $\{1+5^{l+1}i,1-5^{l+1}i\}$ from level $l+2$ to level $t+l+1$ are of the form $\{1+5^{l+1}i+5^{l+2}a,(1+5^{l+1}i+5^{l+2}a)^{-1}\}$, where $i\in\{1,2\}$ and $a\in\{0,\dots,5^{t-1}-1\}$. Hence from Propositions \ref{prop:pZ_p attracting basin} and \ref{prop: p=5, m=4 (mod 5)}, the statement is proved.

2. The lifts of $\{1+5^{l+1}i,1-5^{l+1}i\}$, $\{-1+5^{l+1}i,-1-5^{l+1}i\}$, $\{\bi+5^{l+1}i, \bi-5^{l+1}i\}$ and $\{-\bi+5^{l+1}i,-\bi-5^{l+1}i\}$ from level $l+2$ to level $t+l+1$ are of the forms $\{1+5^{l+1}i+5^{l+2}a,(1+5^{l+1}i+5^{l+2}a)^{-1}\}$, $\{-1+5^{l+1}i+5^{l+2}a,(-1+5^{l+1}i+5^{l+2}a)^{-1}\}$, $\{\bi+5^{l+1}i+5^{l+2}a,-(\bi+5^{l+1}i+5^{l+2}a)^{-1}\}$ and $\{-\bi+5^{l+1}i+5^{l+2}a,(-\bi+5^{l+1}i+5^{l+2}a)^{-1}\}$, respectively, where $i\in\{1,2\}$ and $a\in\{0,\dots,5^{t-1}-1\}$. Hence from Propositions \ref{prop:pZ_p attracting basin} and \ref{prop: p=5, m=4 (mod 5)}, the statement is proved.

3. The lifts of $\{1+5^{l+1}i,1-5^{l+1}i\}$, $\{-1+5^{l+1}i,-1-5^{l+1}i\}$ and $\{\bi+5^{l+1}i,-\bi+5^{l+1}i\}$ from level $l+2$ to level $t+l+1$ are of the forms $\{1+5^{l+1}i+5^{l+2}a,(1+5^{l+1}i+5^{l+2}a)^{-1}\}$, $\{-1+5^{l+1}i+5^{l+2}a,(-1+5^{l+1}i+5^{l+2}a)^{-1}\}$ and $\{\bi+5^{l+1}i+5^{l+2}a,(\bi+5^{l+1}i+5^{l+2}a)^{-1}\}$, respectively, where $i\in\{1,2\}$ and $a\in\{0,\dots,5^{t-1}-1\}$. Hence from Propositions \ref{prop:pZ_p attracting basin} and \ref{prop: p=5, m=4 (mod 5)}, the statement is proved.
\end{proof}

Consider the cases $m\equiv 2~(\mathrm{mod}~5)$ and $m\equiv -2~(\mathrm{mod}~5)$. We notice from Theorem \ref{thm:m=1(mod5)} and \ref{thm:m=-1(mod5)} that the minimal decomposition heavily depends on the values $\nu_5(m-1)$ and $\nu_5(m+1)$. For the cases $m\equiv 2~(\mathrm{mod}~5)$ and $m\equiv -2~(\mathrm{mod}~5)$, our computation shows that there is a similar pattern for the minimal decomposition which heavily depends on the values $\nu_5(m-\bi)$ and $\nu_5(m+\bi)$. 

When we write $\bi=\sum_{i=0}^\infty a_i 5^i$, the value $a_i$ can be computed using the values $a_0, a_1, \dots, a_{i-1}$, by Hensel's lemma, in this way, the pattern of $a_i$ is irregular. Hence we do not have a general formula for the values $\nu_5(m -\bi)$ and $\nu_5(m +\bi)$ and so the minimal decomposition for this case can not be proved using previous methods. We need a different method which we do not know yet. 

We state our computational result as a conjecture for general $m$ and give an example for a specific $m$. 

\begin{conjecture}\label{conj: p=5, m=2,-2 (mod 5)}
Let $f(x)=x^m$ over $\Z_5$ and assume $m\geq 2$ with $m\equiv \pm2~(\mathrm{mod}~5)$. Let $t=\min\{\nu_5(m\mp\bi)\}$.
\begin{enumerate}
\item If $m\equiv \pm 2~(\mathrm{mod}~10)$, then $f(x)$ has $5^{t-1}$ growing cycles of length 4 at every level $\geq t+1$.

\item If $m\equiv \pm 7~(\mathrm{mod}~10)$, then $f(x)$ has $4\cdot 5^{t-1}$ growing cycles of length 4 at every level $\geq t+1$.
\end{enumerate}
\end{conjecture}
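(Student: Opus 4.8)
The plan is to follow the scheme of Propositions \ref{prop: p=5, m=1 (mod 5)} and \ref{prop: p=5, m=4 (mod 5)}: discard the non-units, locate the level-$1$ periodic seeds among the Teichm\"uller representatives $\{1,-1,\bi,-\bi\}$, and track their lifts. First I would remove $5\Z_5$ by Proposition \ref{prop:pZ_p attracting basin} and decompose $\Z_5^{*}=\bigsqcup_{\zeta\in\{1,-1,\bi,-\bi\}}\zeta(1+5\Z_5)$ into the four Teichm\"uller cosets. Since $f(\zeta u)=\zeta^{m}u^{m}$ and $u\mapsto u^{m}$ is a bijection of $1+5\Z_5$ (as $5\nmid m$), $f$ carries $\zeta(1+5\Z_5)$ onto $\zeta^{m}(1+5\Z_5)$, so the permutation $\zeta\mapsto\zeta^{m}$ of the fourth roots of unity dictates which cosets carry periodic points. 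For $m\equiv\pm2\pmod{10}$ (that is, $m$ even, so $m^{2}\equiv0\pmod 4$) one checks $f^{2}(\Z_5^{*})\subseteq 1+5\Z_5$, whence the unique periodic Teichm\"uller point is the fixed point $1$; for $m\equiv\pm7\pmod{10}$ (that is, $m$ odd, so $m\equiv3\pmod4$) the seeds are the fixed points $\{1\},\{-1\}$ together with the $2$-cycle $\{\bi,-\bi\}$.

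Next I would compute the multipliers $a_1$ at each seed to exhibit partial splitting. At a fixed point $\pm1$ one has $a_1=f'(\pm1)=m\equiv\pm2\pmod5$, of order $d=4$ in $(\Z/5\Z)^{*}$; at the $2$-cycle one has $a_1=f'(\bi)f'(-\bi)=m^{2}\big(\bi\cdot(-\bi)\big)^{m-1}=m^{2}\equiv-1\pmod5$, of order $d=2$. Hence by statement $4$ of Proposition \ref{prop:behavior of lifts for p>=3} each fixed seed contributes one lifted cycle of length $1\cdot4=4$, while the $2$-cycle contributes $(5-1)/2=2$ lifted cycles of length $2\cdot2=4$, all at level $2$. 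This yields exactly one seed $4$-cycle in the even case and $1+1+2=4$ seed $4$-cycles in the odd case, which are precisely the multipliers $1$ and $4$ standing in front of $5^{t-1}$.

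It then remains to follow each length-$4$ seed cycle through its lifts. Setting $t=\nu_5(m^{2}+1)$, we have $m^{2}+1=(m-\bi)(m+\bi)$ with exactly one factor divisible by $5$, so $t$ agrees with the $\nu_5(m\mp\bi)$ of the statement, and $\nu_5(m^{4}-1)=\nu_5(m^{2}-1)+\nu_5(m^{2}+1)=t$ because $m^{2}\equiv-1\pmod5$ makes $m^{2}-1$ a unit. I would verify that each level-$2$ seed $4$-cycle is a splitting cycle ($a_2\equiv1\pmod5$) and then compute $A_l=\nu_5(a_l-1)$ and $B_l=\nu_5(b_l)$; by analogy with the proof of Proposition \ref{prop: p=3, m=-1 (mod 3)} one expects $B_l=t-1$ while $A_l\geq t$, so that Proposition \ref{prop: lifts of splitting cycle for p>=3} forces each seed to split until level $t+1$ and then grow forever. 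Since each split multiplies the number of cycles by $5$ (statement $2$ of Proposition \ref{prop:behavior of lifts for p>=3}), the $t-1$ splits turn each seed $4$-cycle into $5^{t-1}$ growing $4$-cycles, giving the asserted $5^{t-1}$ and $4\cdot5^{t-1}$.

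The hard part, and the reason only a conjecture is stated, is the computation of $b_l$ for the cycles sitting over $\bi$ and $-\bi$. For the seeds over $\pm1$ the analysis is clean: one substitutes $m=5^{t}k\pm1$ and expands $(\pm1+5^{l+1}c)^{m}$ by the binomial theorem exactly as in Propositions \ref{prop: p=5, m=1 (mod 5)} and \ref{prop: p=5, m=4 (mod 5)}, or equivalently one conjugates $f$ on $1+5\Z_5$ by $\log$ to multiplication by $m$ and invokes the multiplication dynamics of Coelho and Parry \cite{CoelhoParry2001} (and likewise $f^{2}$ over $\bi$ becomes multiplication by $m^{2}$). Over $\bi$, however, the direct expansion $(\bi+5^{l+1}c)^{m}=\sum_{j}\binom{m}{j}\bi^{\,m-j}(5^{l+1}c)^{j}$ mixes the powers $\bi^{\,m-j}$, and the controlling valuation $t=\nu_5(m^{2}+1)$ is bound to the $5$-adic digits of $\bi$, which Hensel's lemma produces one at a time with no closed pattern. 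Thus the substitution $m=5^{t}k\pm1$ has no analogue here, the $b_l$ computation resists the binomial method, and no explicit coordinate description of the minimal components over $\bi,-\bi$ is available; this single step is what blocks a complete proof.
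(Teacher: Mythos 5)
You should first note that the paper itself offers no proof of this statement: it is stated explicitly as a conjecture, supported only by computations and one worked example, precisely because (as the authors say) the $5$-adic digits of $\bi$ are produced by Hensel's lemma with no closed pattern, so there is no general formula for $\nu_5(m\mp\bi)$ and no analogue of the substitution $m=5^tk\pm1$ that drives the binomial computations of $b_l$ in the proved cases. Your proposal is therefore correctly calibrated: the outline (discard $5\Z_5$ via Proposition \ref{prop:pZ_p attracting basin}, decompose $\Z_5^*$ into Teichm\"uller cosets, identify the level-$1$ seeds, use partial splitting with $d=4$ at fixed seeds and $d=2$ at the $2$-cycle via statement 4 of Proposition \ref{prop:behavior of lifts for p>=3} to get the counts $1$ and $4$, then argue $A_l\geq t$, $B_l=t-1$ and invoke Proposition \ref{prop: lifts of splitting cycle for p>=3} so that $t-1$ splits yield $5^{t-1}$ growing $4$-cycles per seed) matches the heuristic the paper extracts from Propositions \ref{prop: p=5, m=1 (mod 5)} and \ref{prop: p=5, m=4 (mod 5)}, and your identification $t=\nu_5(m^2+1)$ is the sensible reading of the paper's $\min\{\nu_5(m\mp\bi)\}$ (it agrees with the example, e.g.\ $m=7$ gives $t=\nu_5(50)=2$). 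Crucially, you terminate at exactly the obstruction the authors name, so you have not claimed more than can be delivered; there is no proof to compare yours against, and none was to be expected.

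One genuine slip in your intermediate analysis: $m\equiv\pm7\pmod{10}$ does \emph{not} force $m\equiv3\pmod4$ (take $m=13\equiv-7\pmod{10}$, $m\equiv1\pmod4$). When $m\equiv1\pmod4$ the points $\pm\bi$ are \emph{fixed} (since $\bi^m=\bi$), each with multiplier $a_1=m\bi^{m-1}=m$ of order $4$ in $(\Z/5\Z)^*$, so each partially splits into one $4$-cycle; only when $m\equiv3\pmod4$ do you get the $2$-cycle $\{\bi,-\bi\}$ with $a_1=m^2\equiv-1\pmod5$ of order $2$, contributing $(5-1)/2=2$ four-cycles. Both configurations give $2+2=4$ seed $4$-cycles, so the conjectured count $4\cdot5^{t-1}$ survives by coincidence of arithmetic, but your structural description is wrong in half the residue classes — note that the paper's Theorem \ref{thm:m=1(mod5)} splits $m\equiv1\pmod5$ into mod-$20$ cases for exactly this reason, and an eventual proof of the conjecture would have to make the same mod-$4$ (equivalently mod-$20$) case distinction, since the minimal components are unions of one ball in the fixed-point case versus two balls in the $2$-cycle case.
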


\begin{example}
  If $m\equiv 7~(\mathrm{mod}~10)$, then $\pm 1$ and $\pm\bi$ are fixed points. So, let $\{1\}$, $\{-1\}$, $\{\bi\}$ and $\{-\bi\}$ be cycles of length 1 at level $l+1$ for any $l\geq 0$. Then, the lifts to level $l+2$, are $\{b, b+5^{l+1},b+5^{l+1}\cdot 2,b+5^{l+1}\cdot 3,b+5^{l+1}\cdot 4\}$ with $b\in \{1,-1,\bi,-\bi \}$.
  
 If $t=1$, then those 4 lifts grow at level $t+l+1=l+2$. With Proposition \ref{prop:pZ_p attracting basin}, the minimal decomposition of $\Z_5$ for $f(x)$ is
\[ \Z_5=\{0,\pm 1,\pm\bi\}\bigsqcup \big(\bigcup_{l\geq 0}\bigcup_{j=1}^4 M_{l,j}\big) \bigsqcup (5\Z_5-\{0\}), \]
where
\begin{align*}
M_{l,1}&=\bigcup_{a=1}^4 (1+5^{l+1}a+5^{l+2}\Z_5),~M_{l,2}=\bigcup_{a=1}^4 (-1+5^{l+1}a+5^{l+2}\Z_5),\\
M_{l,3}&=\bigcup_{a=1}^4 (\bi+5^{l+1}a+5^{l+2}\Z_5)~\mathrm{and}~M_{l,4}=\bigcup_{a=1}^4 (-\bi+5^{l+1}a+5^{l+2}\Z_5).
\end{align*}
Here, $\{0,\pm 1,\pm\bi\}$ is the set of fixed points, $M_{l,j}$'s are the minimal components, and $5\Z_5-\{0\}$ is the attracting basin.

If $t=2$, we obtain $4\cdot 5^{t-1}=20$ growing lifts at level $t+l+1=l+3$, respectively. Those are 
$\{1+5^{l+1}+5^{l+2}i,1+5^{l+1}\cdot 7+5^{2l+2}+5^{l+2}\cdot 2i,1+5^{l+1}\cdot 24+5^{2l+2}+5^{l+2}\cdot 4i,1+5^{l+1}\cdot 18+5^{2l+2}\cdot 3+5^{l+2}\cdot 3i\}$, 
$\{-1+5^{l+1}+5^{l+2}i,-1+5^{l+1}\cdot 7+5^{2l+2}\cdot 4+5^{l+2}\cdot 2i,-1+5^{l+1}\cdot 24+5^{2l+2}\cdot 4+5^{l+2}\cdot 4i,-1+5^{l+1}\cdot 18+5^{2l+2}\cdot 2+5^{l+2}\cdot 3i\}$, 
$\{\bi+5^{l+1}+5^{l+2}i,\bi+5^{l+1}\cdot 7-5^{2l+2}\bi+5^{l+2}\cdot 2i,\bi+5^{l+1}\cdot 24-5^{2l+2}\bi+5^{l+2}\cdot 4i,\bi+5^{l+1}\cdot 18+5^{2l+2}\cdot 2\bi+5^{l+2}\cdot 3i\}$ and 
$\{-\bi+5^{l+1}+5^{l+2}i,-\bi+5^{l+1}\cdot 7+5^{2l+2}\bi+5^{l+2}\cdot 2i,-\bi+5^{l+1}\cdot 24+5^{2l+2}\bi+5^{l+2}\cdot 4i,-\bi+5^{l+1}\cdot 18-5^{2l+2}\cdot 2\bi+5^{l+2}\cdot 3i\}$ 
where $i\in\{0,\dots,4\}$. With Proposition \ref{prop:pZ_p attracting basin}, the minimal decomposition of $\Z_5$ for $f(x)$ is
\[ \Z_5=\{0,\pm 1,\pm\bi\}\bigsqcup \big(\bigcup_{l\geq 0}\bigcup_{i=0}^4\bigcup_{j=1}^4 M_{l,i,j}\big) \bigsqcup (5\Z_5-\{0\}), \]
where
\begin{align*}
M_{l,i,1}&=(1+5^{l+1}+5^{l+2}i+5^{l+3}\Z_5)\\
&\quad \cup(1+5^{l+1}\cdot 7+5^{2l+2}+5^{l+2}\cdot 2i+5^{l+3}\Z_5)\\
&\quad \cup(1+5^{l+1}\cdot 24+5^{2l+2}+5^{l+2}\cdot 4i+5^{l+3}\Z_5)\\
&\quad \cup(1+5^{l+1}\cdot 18+5^{2l+2}\cdot 3+5^{l+2}\cdot 3i+5^{l+3}\Z_5),\\
M_{l,i,2}&=(-1+5^{l+1}+5^{l+2}i+5^{l+3}\Z_5)\\
&\quad \cup(-1+5^{l+1}\cdot 7+5^{2l+2}\cdot 4+5^{l+2}\cdot 2i+5^{l+3}\Z_5)\\
&\quad \cup(-1+5^{l+1}\cdot 24+5^{2l+2}\cdot 4+5^{l+2}\cdot 4i+5^{l+3}\Z_5)\\
&\quad \cup(-1+5^{l+1}\cdot 18+5^{2l+2}\cdot 2+5^{l+2}\cdot 3i+5^{l+3}\Z_5)\big),\\
M_{l,i,3}&=(\bi+5^{l+1}+5^{l+2}i+5^{l+3}\Z_5)\\
&\quad \cup(\bi+5^{l+1}\cdot 7-5^{2l+2}\bi+5^{l+2}\cdot 2i+5^{l+3}\Z_5)\\
&\quad \cup(\bi+5^{l+1}\cdot 24-5^{2l+2}\bi+5^{l+2}\cdot 4i+5^{l+3}\Z_5)\\
&\quad \cup(\bi+5^{l+1}\cdot 18+5^{2l+2}\cdot 2\bi+5^{l+2}\cdot 3i+5^{l+3}\Z_5)~\mathrm{and}\\
M_{l,i,4}&=(-\bi+5^{l+1}+5^{l+2}i+5^{l+3}\Z_5)\\
&\quad \cup(-\bi+5^{l+1}\cdot 7+5^{2l+2}\bi+5^{l+2}\cdot 2i+5^{l+3}\Z_5)\\
&\quad \cup(-\bi+5^{l+1}\cdot 24+5^{2l+2}\bi+5^{l+2}\cdot 4i+5^{l+3}\Z_5)\\
&\quad \cup(-\bi+5^{l+1}\cdot 18-5^{2l+2}\cdot 2\bi+5^{l+2}\cdot 3i+5^{l+3}\Z_5).
\end{align*}
Here, $\{0,\pm 1,\pm\bi\}$ is the set of fixed points, $M_{l,i,j}$'s are the minimal components, and $5\Z_5-\{0\}$ is the attracting basin.
\end{example}

Like the examples, if we have a specific integer $m\geq 2$ with $m\equiv$ 2 or -3 (mod 5), then we obtain the value $t=\min\{\nu_5(m\mp\bi)\}$ and minimal components by Conjectures \ref{conj: p=5, m=2,-2 (mod 5)}.

Postal address of Jung and Kim: Department of Mathematics, Korea University, Anam-ro, Seongbuk-gu, Seoul, 02841, Republic of Korea

     Email address of Jung: \texttt{myunghyun.jung07@gmail.com}

     Email address of Kim: \texttt{kim.donggyun@gmail.com}

\end{document}